\theoremstyle{plain}
\newtheorem{thm}{Theorem}[section]
\newtheorem{lem}[thm]{Lemma}
\newtheorem{cor}[thm]{Corollary}
\newtheorem{prop}[thm]{Proposition}
\theoremstyle{definition}
\newtheorem{ntt}[thm]{}
\newtheorem{ex}[thm]{Example}
\newtheorem{rem}[thm]{Remark}
\newtheorem{dfn}[thm]{Definition}
\newcommand{\ba}{\overline{\rule{2.5mm}{0mm}\rule{0mm}{4pt}}} %canonical involution
\newcommand{\ii}{\mathsf{i}}
\newcommand{\zz}{\mathbb{Z}}
\newcommand{\XX}{\mathfrak{X}}     % variety of complete flags twisted
\newcommand{\BX}{{\XX_0}}   % variety of compete flags split
\newcommand{\LL}{\mathcal{L}}      % associated line bundle
\newcommand{\resCH}{\mathrm{res}_{\scriptstyle\mathrm{CH}}}
\newcommand{\resch}{\mathrm{res}_{\scriptstyle\mathrm{{Ch}}}}
\newcommand{\resk}{\mathrm{res}_{\scriptstyle\mathrm{{K_0}}}}
\newcommand{\resg}{\mathrm{res}_{{{\gamma}}}}
\newcommand{\Fp}{\mathbb{F}_p}     % Z/pZ
\newcommand{\mf}{\mathbb{F}_2}
\newcommand{\D}{\mathrm{D}}
\newcommand{\cc}{\mathfrak{c}}     % characteristic map
\newcommand{\simpc}{\mathrm{sc}}
\newcommand{\br}{{\mathrm{Br}}}  % groupe de Brauer
\newcommand{\qform}[1]{{\langle{#1}\rangle}}  % formes quadratiques
\newcommand{\ra}{\rightarrow}
\newcommand{\T}{{\hat{T}_0}}
\DeclareMathOperator{\ind}{\mathrm{ind}}  % index
\DeclareMathOperator{\Ch}{\mathrm{Ch}}    % Chow groups modulo p
\DeclareMathOperator{\Pic}{\mathrm{Pic}}  % Picard group modulo p
\DeclareMathOperator{\CH}{\mathrm{CH}} % Chow group
\DeclareMathOperator{\im}{\mathrm{im}}    % image
\DeclareMathOperator{\ad}{ad} 		% adjoint involution
\newcommand{\PGO}{\mathrm{PGO}^+}      % PGO
\newcommand{\SO}{\mathrm{O}^+}
\newcommand{\GL}{\mathrm {GL}}
\newcommand{\Spin}{\mathrm {Spin}}
\newcommand{\C}{\mathcal{C}}   %Clifford algebras
\title{The $J$-invariant, Tits algebras and triality}
\author{A.~Qu\'eguiner-Mathieu, N.~Semenov, K.~Zainoulline}
\subjclass[2000]{Primary 20G15, 14C25; Secondary 16W10, 11E04}
\keywords{linear algebraic group, torsor, Chow motive, Tits algebra, triality, algebra with involution}
\address{(Qu\'eguiner-Mathieu) LAGA - UMR 7539 - Institut Galil\'ee - Universit\'ee Paris 13 - 93430 Villetaneuse - France}
\urladdr{http://www.math.univ-paris13/~queguin/}
\address{(Semenov) 
Institut f\"ur Mathematik - Johannes Gutenberg-Universit\"at - Staudingerweg~9 - 55099 Mainz - Germany}
\urladdr{http://www.staff.uni-mainz.de/semenov/}
\address{(Zainoulline) Department of Mathematics and Statistics - University of Ottawa - 585~King Edward - Ottawa ON K1N6N5 - Canada}
\urladdr{http://www.mathstat.uottawa.ca/{\textasciitilde}kirill/}
\begin{document}

\begin{abstract}
In the present paper we set up a connection between the
indices of the Tits algebras of a simple linear algebraic group $G$
and the degree one parameters of its motivic $J$-invariant. Our main technical tool
are the second Chern class map and Grothendieck's $\gamma$-filtration. 

As an application we recover some known results on the $J$-invariant 
of quadratic forms of small dimension; we describe all possible values of the $J$-invariant of an algebra with
orthogonal involution up to degree $8$ and give explicit examples; we establish several relations between
the $J$-invariant of an algebra $A$ with orthogonal involution
and the $J$-invariant of the corresponding quadratic form over the function field of the Severi-Brauer variety of $A$.
\end{abstract}

\maketitle

\tableofcontents

%%% Document STARTS here

\section*{Introduction}

The notion of a {\em Tits algebra} was introduced by Jacques Tits in his celebrated
paper on irreducible representations~\cite{Ti71}. 
This invariant of a linear algebraic group $G$
plays a crucial role in the computation of the $K$-theory of
twisted flag varieties by Panin \cite{Pa94} and 
in the statements and proofs of the index reduction formulas by 
Merkurjev, Panin and Wadsworth \cite{MPW}. 
It has important applications to the classification of
linear algebraic groups, and to the study of
the associated homogeneous varieties. 

Another invariant of a linear algebraic group, 
the {\em $J$-invariant},
has been recently defined in \cite{PSZ}. 
It can be viewed as an extension to arbitrary groups of the $J$-invariant of a quadratic form which
was studied during the last decade, notably by 
Karpenko, Merkurjev, Rost and Vishik. 
This invariant describes the motivic behavior of the variety of Borel subgroups of $G$, 
and it consists of an $r$-tuple of integers $J=(j_1,j_2,\ldots,j_r)$ for each torsion prime $p$ of $G$.

The main goal of the present paper
is to set up a connection between the
indices of the Tits algebras of a group $G$
and the degree one parameters of its motivic $J$-invariant. 
The main results are Cor.~\ref{mainineq} and Thm.~\ref{corj0}, which consists of inequalities relating those integers. 
As a crucial ingredient, we use Panin's computation of $K_0(\XX)$, where $\XX$ is the variety of Borel subgroups of
$G$~\cite{Pa94}. The result is obtained using Grothendieck's $\gamma$-filtration on $K_0(\XX)$, and relies on
Lemma~\ref{ci}, which describes Chern classes 
of rational bundles of the first two layers of the $\gamma$-filtered group $K_0(\XX)$. 
 
In the remaining part of the paper, devoted to applications, we always take $p=2$. 
Let $(A,\sigma)$ be a central simple algebra endowed with an involution of orthogonal type and trivial discriminant. 
Its automorphism group is a group of type $\D_n$, and the $J$-invariant in this setting provides a discrete invariant of $(A,\sigma)$, 
which coincides in the split case with the $J$-invariant of the underlying quadratic form. 
As an application of our main result, we give explicit formulas to compute this invariant for algebras of small degree. 
The most interesting case is the degree $8$ case, treated in Thm.~\ref{trial.thm}, where the proof is based on triality, and precisely on its
consequences on Clifford algebras (see~\cite[\S 42.A]{BI}). 

The $J$-invariant is a finite and bounded tuple of integers. Hence, given a fixed prime number $p$, there are finitely many possible values for the $J$-invariant of a group of given type and rank. Moreover, since the image under the Steenrod operation of a rational cycle is rational, it is proven in~\cite[4.12]{PSZ} that some values are impossible (see \S\ref{appendix} below for a precise table). For quadratic forms, this was already noticed by Vishik~\cite[\S 5]{Vi05}, who also checked that these restrictions are the only ones for quadratic forms of small dimension ({\em loc. cit.} Question 5.13). 
As opposed to this, we prove in Cor.~\ref{values.cor} that some values for the $J$-invariant, which are not excluded by the Steenrod operation, do not occur. 
This happens already in degree $8$, and follows from a classical result on algebras with involution~\cite[(8.31)]{BI}, due to Tits and Allen. 

Finally, we study in \S\ref{generic.section} the properties of the quadratic form attached to an orthogonal involution $\sigma$ after generic splitting of the underlying algebra $A$. In particular, when this form belongs to the $s$-th power of the fundamental ideal of the Witt ring, we get interesting consequences on the $J$-invariant of $(A,\sigma)$ in Thm.~\ref{Meta-conj}. 

Finally, we should add that 
Junkins in \cite{Ju11} has successfully applied and extended our results for prime $p=3$ 
to characterize the behaviour of Tits indices of 
exceptional groups of type $\mathrm{E}_6$.

\subsection*{Notations} 

\subsubsection*{Algebraic groups and varieties of Borel subgroups}
We work over a base field $F$ of characteristic different from $2$. 
Let $G_0$ be a split simple linear algebraic group of rank $n$ over $F$.
We fix a split maximal torus $T_0\subset G_0$, and a Borel subgroup $B_0\supset T_0$, and we let $\hat T_0$
be the character group of $T_0$. 
We let $\Pi=\{\alpha_1,\alpha_2,\ldots,\alpha_n\}$ 
be a set of simple roots with respect to $B_0$, and 
$\{\omega_1,\omega_2,\ldots,\omega_n\}$ 
the respective set of fundamental weights, so that 
$\alpha_i^\vee(\omega_j)=\delta_{ij}$. 
The roots and weights are always numbered as in Bourbaki~\cite{Bou}. 

Recall that $\Lambda_r\subset \hat T_0\subset \Lambda_\omega$, where 
$\Lambda_r$ and $\Lambda_\omega$ are the root and weight lattices, respectively. 
The lattice $\hat T_0$ coincides with $\Lambda_r$ (respectively $\Lambda_\omega$) if and only if $G_0$ is adjoint
(resp. simply connected).  

\begin{ntt} 
\label{inner}
Throughout the paper, $G$ denotes a twisted form of $G_0$, and $T\subset G$ is the corresponding maximal torus. 
We always assume that $G$ is an inner twisted form of $G_0$, and even a little bit more, that 
is $G={}_\xi G_0$ for some cocycle $\xi\in Z^1(F,G_0)$. 
Note that this hypothesis need not be satisfied by all groups isogeneous to $G$; for instance, 
it is valid for the simply connected cover $G^{\mathrm{sc}}$ of $G$ if and only if $G$ is a strongly inner
twisted form of $G_0$. 
\end{ntt}

\begin{ntt} 
\label{picard}
We let $\BX$ be the variety of Borel subgroups of $G_0$, or equivalently of its simply connected cover
$G_0^{\mathrm{sc}}$, and $\XX={}_\xi\BX$ the corresponding twisted variety. 
Both varieties are defined over $F$, and they are isomorphic over a separable closure $F_s$ of $F$. 
The Picard group $\Pic(\BX)$ can be computed as follows. 
Since any character $\lambda\in \T$ extends uniquely to $B_0$, 
it defines a line bundle $\LL(\lambda)$ over $\BX$. 
Hence, there is a natural map $\T\ra\Pic(\BX)$, 
which is 
an isomorphism if $G_0$ is simply connected. 
So, we may identify the Picard group 
$\Pic(\BX)$ with the weight lattice $\Lambda_\omega$ (this fact goes back to Chevalley, see also \cite[Prop.~2.2]{MT}). 
\end{ntt} 

\subsubsection*{Algebras with involution} 
We refer to~\cite{BI} for definitions and classical facts on algebras with involution. 
Throughout the paper, $(A,\sigma)$ always stands for a central simple algebra of even degree $2n$, 
endowed with an involution of orthogonal type with trivial discriminant. 
In particular, this implies that the Brauer class $[A]$ of the algebra $A$ is an element of order $2$ of the Brauer group $\br(F)$. 
Because of the discriminant hypothesis, the Clifford algebra of $(A,\sigma)$, endowed with its canonical involution, 
is a direct product 
$(\C(A,\sigma),\underline\sigma)=(\C_+,\sigma_+)\times(\C_-,\sigma_-)$ of two central simple algebras. 
If moreover $n$ is even, the involutions $\sigma_+$ and $\sigma_-$ also are of orthogonal type. 

\begin{ntt}\label{hypinv}
We refer to~\cite[\S6]{BI} for the definition of isotropic and hyperbolic involutions. 
In particular, recall that $A$ has a hyperbolic involution if and only if it decomposes as $A=M_2(B)$ for some 
central simple algebra $B$ over $F$. 
When this occurs, $A$ has a unique hyperbolic involution $\sigma_0$ up to isomorphism.  
Moreover, $\sigma_0$ has trivial discriminant, and 
if additionally the degree of $A$ is divisible by $4$, then its Clifford algebra has a split component by~\cite[(8.31)]{BI}.  
\end{ntt}  

\begin{ntt} 
\label{cocenter}
The connected component of the automorphism group of $(A,\sigma)$ is denoted 
by $\PGO(A,\sigma)$. Since the involution has trivial discriminant, it is an inner twisted form of $\PGO_{2n}$, and hence satisfies~(\ref{inner}). 
Both groups are adjoint of type $\D_n$. 
We recall from Bourbaki~\cite{Bou} the description of their cocenter $\Lambda_\omega/\Lambda_r$ in terms of the 
fundamental weights, for $n\geq 3$: 

If $n=2m$ is even, then $\Lambda_\omega/\Lambda_r\simeq\zz/2\zz\times\zz/2\zz$, and the three non-trivial elements are the classes of $\omega_1$, $\omega_{2m-1}$ and $\omega_{2m}$. 

If $n=2m+1$ is odd, then $\Lambda_\omega/\Lambda_r\simeq\zz/4\zz$, and the generators are the classes of $\omega_{2m}$ and $\omega_{2m+1}$. 
Moreover, the element of order $2$ is the class of $\omega_1$. 
\end{ntt}

\subsubsection*{Tits algebras} 
Consider the simply connected cover $G_0^{\mathrm{sc}}$ of $G_0$ and the corresponding 
twisted group $G^\simpc$.
We denote by $\Lambda_\omega^+\subset \Lambda_\omega$ the cone of dominant weights. 
Since $G$ is an inner twisted form of $G_0$, for any $\omega\in\Lambda_\omega^+$ the corresponding 
irreducible representation $G_0^{\mathrm{sc}}\ra \GL(V)$, viewed as a representation of $G^{\mathrm{sc}} \times F_s$, descends to an algebra representation 
$G^{\mathrm{sc}}\ra \GL_1(A_\omega)$, where $A_\omega$ is a central simple algebra over $F$, called a Tits algebra of $G$ (cf.~\cite[\S 3,4]{Ti71} or~\cite[\S 27]{BI}).
In particular, to any fundamental weight $\omega_\ell$ corresponds a Tits algebra $A_{\omega_\ell}$
\begin{ntt}
\label{Titsalgebra}
Taking Brauer classes, the assignment $\omega\in \Lambda_\omega^+\mapsto A_\omega$ induces a homomorphism to the Brauer group of $F$ ({loc. cit.}) 
\[\beta\colon\Lambda_\omega/\T\ra {\mathrm{Br}}(F).\] 
\begin{ex}\label{FR}
If $G$ is adjoint of type $\D_n$, that is $G=\PGO(A,\sigma)$, the Tits algebras $A_{\omega_1}$, $A_{\omega_{2m-1}}$ and $A_{\omega_{2m}}$ are respectively the algebra $A$ and the two
components $\C_+$ and $\C_-$ of the Clifford algebra of $(A,\sigma)$ (see~\cite[\S 27.B]{BI}). 
Applying Tits homomorphism, and taking into account the description of $\Lambda_\omega/\T=\Lambda_\omega/\Lambda_r$, we get the so-called fundamental relations~\cite[(9.12)]{BI}
relating their Brauer classes, namely: 

If $n=2m$ is even, that is $\deg(A)\equiv0\mod 4$, then $[\C_+]$ and $[\C_-]$ are of order at most $2$, and 
$[A]+[\C_+]+[\C_-]=0\in\br(F)$. 
In other words, any of those three algebras is Brauer equivalent to the tensor product of the other two. 

If $n=2m+1$ is odd, that is $\deg(A)\equiv2\mod 4$, then $[\C_+]$ and $[\C_-]$ are of order dividing $4$, and 
$[A]=2[\C_+]=2[\C_-]\in\br(F)$. 

\end{ex}
For any $\omega\in\Lambda_\omega$, we denote by $i(\omega)$ the index of the Brauer class $\beta(\bar\omega)$, that is the degree of the underlying division algebra.
For fundamental weights, $i(\omega_\ell)$ is the index of the Tits algebra $A_{\omega_\ell}$. 
\end{ntt} 

\section{Characteristic maps and restriction maps}\label{sec2}

\label{char.section}

\subsection*{Characteristic map for Chow groups}

Let $\CH^*(-)$ be the graded Chow ring of algebraic
cycles modulo rational equivalence. 
Since $\BX$ is smooth projective, the first Chern class induces 
an isomorphism between the Picard group $\Pic(\BX)$ and $\CH^1(\BX)$~\cite[Cor. II.6.16]{Hart}. 
Combining with the isomorphism $\Lambda_\omega\simeq \Pic(\BX)$ of~\ref{picard}, we get an isomorphism, which 
is the simply connected degree $1$ characteristic map:  
\[\cc^{(1)}_\mathrm{sc}\colon\Lambda_\omega\tilde\rightarrow \CH^1(\BX).\]
Hence, the cycles 
$$
h_i:=c_1(\LL(\omega_i)),\quad i=1\ldots n,
$$ 
form a $\zz$-basis of the group $\CH^1(\BX)$. 

\begin{ntt}\label{cod1c} 
In general, the degree $1$ characteristic map is the restriction of this isomorphism 
to the character group of $T_0$, 
\[
\cc^{(1)}\colon \T\subset\Lambda_\omega\to \CH^1(\BX).
\]
Hence, it maps 
$\lambda=\sum_{i=1}^n a_i\omega_i\in\T$, where $a_i\in\zz$, to 
$c_1(\LL(\lambda))=\sum_{i=1}^n a_ih_i$. 
For instance, in the adjoint case, the image of $\cc^{(1)}$ is generated by 
linear combinations
$\sum_j c_{ij}h_j$,
where $c_{ij}=\alpha_i^\vee(\alpha_j)$ 
are the coefficients of the Cartan matrix. 
\end{ntt}

\begin{ex} 
\label{excharmap}
We let $p=2$ and consider the Chow group with coefficients in $\mf$ 
$\Ch^1(\BX)=\CH^1(\BX)\otimes_\zz\mf$. 
Assume $G_0$ is of type $\D_4$. 
Using the simply connected characteristic map, we may identify the degree $1$ Chow group modulo $2$ with the $\mf$-lattice
\[
 \Ch^1(\BX)=\mf h_1\oplus\mf h_2\oplus\mf h_3\oplus \mf h_4
\]
Moreover, numbering roots as in~\cite{Bou}, the Cartan matrix modulo $2$ is given by
$$
\left(\begin{array}{cccc}
0 & 1 & 0 & 0 \\
1 & 0 & 1 & 1 \\
0 & 1 & 0 & 0 \\
0 & 1 & 0 & 0
\end{array}\right)
$$
Therefore, in the adjoint case 
the image of the characteristic map $\cc^{(1)}_{\mathrm {ad}}$ with
$\mathbb{F}_2$-coefficients
is the subgroup 
\[
 \im(\cc^{(1)}_{\mathrm {ad}})=\mf h_2\oplus \mf (h_1+h_3+h_4)\subset\Ch^1(\BX)
\]
In the half-spin case, that is when one of the two weights $\omega_3$, $\omega_4$ is in $\T$, say 
$\omega_3\in \T$, we get
\[
 \im(\cc^{(1)}_{\mathrm {hs}})=\mf h_2\oplus\mf h_3 \oplus\mf (h_1+h_4)\subset\Ch^1(\BX)
\] 
\end{ex}

\begin{ntt} 
\label{defchar}
The degree $1$ characteristic map extends to a characteristic map 
$$\cc \colon \mathop{S^*}(\T)\to \CH^*(\BX),$$
where $\mathop{S^*}(\T)$ 
is the symmetric algebra of $\T$ (see \cite[\S4]{Gr58}, \cite[\S1.5]{De74}). 
Its image $\im(\cc)$ is generated by the elements of codimension one, that is by the image of $\cc^{(1)}$. 
\end{ntt}

\subsection*{Restriction map for Chow groups}

\begin{ntt}
\label{charinres}
Let $G$ and $\xi\in Z^1(F,G_0)$ be as in~\ref{inner}, so that $G={}_\xi G_0$. 
The cocycle $\xi$ induces an identification $\XX\times_F F_s\simeq\BX\times_F F_s$. 
Moreover, since $\BX$ is split, $\CH^*(\BX\times_F F_s)=\CH^*(\BX)$. 
Hence the restriction map can be viewed as a map 
\[
\resCH\colon \CH^*(\XX)\to \CH^*(\XX\times_F F_s)\simeq \CH^*(\BX).
\]
A cycle of $\CH^*(\BX)$ is called rational if it belongs to the image of the restriction.  

In~\cite[Thm.6.4(1)]{KM06}, it is proven that, under the hypothesis~(\ref{inner}), any cycle in the image
of the characteristic map $\cc$ is rational, i.e.
$$\im(\cc)\subset \im(\resCH).$$
(See~\cite[\S 7]{KM06} to compare their $\bar\varphi_G$ with our characteristic map.) 
\end{ntt}

\begin{rem} 
Note that the image of the restriction map does not depend on the choice of $G$ in its isogeny class, while 
the image of the characteristic map does. But the inclusion holds only if $G$ can be obtained from $G_0$ by twisting 
by a cocycle with values in $G_0$. Such a cocycle cannot always be lifted to a cocycle with values 
in a larger group isogeneous to $G_0$. For instance, if $G$ is not a strongly inner form of $G_0$, then there 
might be some non rational cycles in the image of the simply connected characteristic map. 
\end{rem}

For a split group $G_0$, the restriction map is an isomorphism, and this inclusion is strict, except if $H^1(F,G_0)$ is trivial. 
On the other hand, generic torsors are defined as the torsors for which it is an equality: 
\begin{dfn} A cocycle $\xi\in Z^1(F,G_0)$ defining the twisted group $G={}_\xi G_0$ 
is said to be {\em generic}
if any rational cycle is in $\im(\cc)$, so that 
$$\im(\cc)= \im(\resCH).$$
\end{dfn}
Observe that a generic cocycle always exists over some
field extension of $F$ by \cite[Thm.6.4(2)]{KM06}.

\subsection*{Characteristic map for $K_0$ and the Steinberg basis}

\begin{ntt}\label{charK0}
Using the identification between $\Lambda_\omega$ and $\Pic(\BX)$ of~\ref{picard}, 
one also gets a characteristic map for $K_0$ (see \cite[\S2.8]{De74}),  
$$
\cc_K\colon \zz[\T] \to K_0(\BX),
$$
where $\mathbb{Z}[\T]\subset\mathbb{Z}[\Lambda_\omega]$ denotes the integral group ring 
of the character group $\T$. 
Any generator $e^\lambda$, $\lambda\in\T$, maps to the class
of the associated line bundle $[\LL(\lambda)]\in K_0(\BX)$.

Combining a theorem of Pittie~\cite{Pit} (see also~\cite[\S 0]{Pa94}), and Chevalley's description of the representation 
rings of the simply connected cover $G_0^\simpc$ of $G_0$ and its Borel subgroup $B_0^\simpc$, one may check that 
$K_0(\BX)$ is isomorphic to the tensor product $\mathbb{Z}[\Lambda_\omega]\otimes_{\mathbb{Z}[\Lambda_\omega]^W}\zz$. 
That is, the simply-connected characteristic map $$\cc_{K,\simpc}\colon\zz[\Lambda_\omega] \to K_0(\BX)$$ is surjective, and its kernel
is generated by the elements of the augmentation ideal that are invariant under the action of 
the Weyl group $W$.
\end{ntt}

\begin{ntt} \label{K0}
Moreover, Steinberg described in~\cite[\S2]{St75} (see also~\cite[\S12.5]{Pa94})
an explicit basis of $\mathbb{Z}[\Lambda_\omega]$ as a free module over $\mathbb{Z}[\Lambda_\omega]^W$. 
It consists of the weights 
$\rho_w$ defined for any $w$ in the Weyl group $W$ by 
$$
\rho_w=\sum_{\{\alpha_k\in\Pi,\  w^{-1}(\alpha_k)\in\Phi^-\}} w^{-1}(\omega_k),
$$
where $\Phi^-$ denotes the set of negative
roots with respect to $\Pi$. 
Hence, we get that the elements 
$$
g_w:=\cc_{K,\simpc}(e^{\rho_w})=[\LL(\rho_w)],\quad w\in W,
$$ 
form a $\mathbb{Z}$-basis
of $K_0(\BX)$, called the {\it Steinberg basis}.
Note that if $w$ is the reflection $w=s_i$, $1\leq i\leq n$, associated to the root $\alpha_i$, we get 
$$
\rho_{s_i}=\sum_{\{\alpha_k\in\Pi,\  s_i(\alpha_k)\in\Phi^-\}} s_i(\omega_k)=s_i(\omega_i)=
\omega_i-\alpha_i.
$$
\begin{dfn}\label{defspecel} 
The elements of the Steinberg basis
$$
g_i=[\LL(\rho_{s_i})],\quad i=1\ldots n
$$
are called {\em special}. 
\end{dfn}
\end{ntt}

\subsection*{Restriction map for $K_0$ and the Tits algebras}

\begin{ntt}
 \label{Panin}
As we did for Chow groups, we use the identification $\XX\times_F F_s\simeq\BX\times_F F_s$ to view the restriction map for $K_0$ as a morphism 
\[ \resk\colon K_0(\XX)\rightarrow K_0(\BX)=\bigoplus_{w\in W}\zz\cdot g_w.\]
By Panin's theorem~\cite[Thm. 4.1]{Pa94},  
the image of the restriction map, whose elements are called rational bundles, is the sublattice with basis 
$$
\{i(\rho_w)\cdot g_w,\  {w\in W}\},$$
where $i(\rho_w)$ is the index of the Brauer class $\beta(\bar{\rho_w})$, that is the index of any corresponding Tits algebra (see~\ref{Titsalgebra}).
\end{ntt}

\begin{ntt}\label{indcond}
Note that since the Weyl group acts trivially on $\Lambda_\omega/\T$, we have
$$
\bar\rho_w=\sum_{\{\alpha_k\in\Pi\mid w^{-1}(\alpha_k)\in\Phi^-\}} \bar\omega_k.
$$
Therefore, the corresponding Brauer class is given by 
$$
\beta(\bar\rho_w)=\Sigma_{\{\alpha_k\in\Pi\mid w^{-1}(\alpha_k)\in\Phi^-\}} 
\beta(\bar\omega_k).
$$
For special elements, we get $\beta(\bar \rho_{s_i})=\beta(\bar\omega_i)$, so that 
$i(\rho_{s_i})$ is the index of the Tits algebra $A_ {\omega_i}$. 
\end{ntt}

\subsection*{Rational cycles versus rational bundles}
Since the total Chern class of a rational bundle is a rational cycle, 
the graded-subring $\mathfrak{B}^*$ of $\CH^*(\BX)$
generated by Chern classes of rational bundles consists of rational cycles. 
We use Panin's description of rational bundles to compute $\mathfrak{B}^*$. 
The total Chern class of $i(\rho_w)\cdot g_w$ is given by 
$$
c(i(\rho_w)\cdot g_w)=
\bigl(1+c_1(\LL(\rho_w))\bigr)^{i(\rho_w)}=\sum_{k=1}^{i(\rho_w)}\binom{i(\rho_w)}{k}c_1(\LL(\rho_w))^k
$$
Therefore, $\mathfrak{B}^*$ is generated as a subring by the homogeneous elements 
$$
\binom{i(\rho_w)}{k} c_1(\LL(\rho_w))^k,\mbox{ for } w\in W,\ 1\leq k\leq i(\rho_w).
$$

Let $p$ be a prime number, and denote by $\ii_w$ the $p$-adic valuation of $i(\rho_w)$, so that 
$i(\rho_w)=p^{\ii_w}q$ for some prime-to-$p$ integer $q$. 
By Luca's theorem~\cite[p. 271]{Dick} the binomial coefficient $\binom{i(\rho_w)}{p^{\ii_w}}$ is congruent to $q$ modulo $p$. 
Hence its image in $\Fp$ is invertible. Considering the image in the Chow group modulo $p$ of the rational cycle 
$\binom{i(\rho_w)}{p^{\ii_w}} c_1(\LL(\rho_w))^{p^{\ii_w}}$, we get: 
 
\begin{lem}\label{ratcycl}
Let $p$ be a prime number. 
For any $w$ in the Weyl group, the cycle 
$$
c_1(\LL(\rho_w))^{p^{\ii_w}}\in \Ch(\BX)=\CH(\BX)\otimes_\zz\Fp
$$
is rational. 
\end{lem}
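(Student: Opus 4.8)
The plan is to follow exactly the thread that the excerpt has been building: use Panin's description of rational bundles (\ref{Panin}), form the total Chern class of a single basis element, and then extract the correct layer modulo $p$ using a divisibility of binomial coefficients. The key observation, already laid out in the paragraph preceding the statement, is that the bundle $i(\rho_w)\cdot g_w$ is rational by Panin's theorem, hence so is its total Chern class, and hence so is each homogeneous component of that Chern class. So the only real content is to isolate the component in codimension $p^{\ii_w}$ and check that its coefficient is a $p$-adic unit.

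\textbf{Step 1.} Write $i(\rho_w)=p^{\ii_w}q$ with $p\nmid q$ (this is the definition of $\ii_w$). Since $g_w=[\LL(\rho_w)]$ is the class of a line bundle, its total Chern class is $1+c_1(\LL(\rho_w))$, so
$$
c\bigl(i(\rho_w)\cdot g_w\bigr)=\bigl(1+c_1(\LL(\rho_w))\bigr)^{i(\rho_w)}=\sum_{k\ge 0}\binom{i(\rho_w)}{k}c_1(\LL(\rho_w))^k.
$$
Because $i(\rho_w)\cdot g_w$ lies in $\im(\resk)$ by \ref{Panin}, and the restriction map is a ring homomorphism compatible with Chern classes, the cycle $c\bigl(i(\rho_w)\cdot g_w\bigr)$ lies in $\im(\resCH)$. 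The restriction map respects the grading on $\CH^*$, so each homogeneous piece $\binom{i(\rho_w)}{k}c_1(\LL(\rho_w))^k$ is a rational cycle in $\CH^k(\BX)$.

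\textbf{Step 2.} Take $k=p^{\ii_w}$. By Kummer's/Lucas's congruence for binomial coefficients (the reference \cite{Dick} cited just above the statement), writing $i(\rho_w)=q\cdot p^{\ii_w}$ one has $\binom{q\,p^{\ii_w}}{p^{\ii_w}}\equiv q\pmod p$, and since $p\nmid q$ this binomial coefficient is invertible in $\Fp$. Now reduce the rational cycle $\binom{i(\rho_w)}{p^{\ii_w}}c_1(\LL(\rho_w))^{p^{\ii_w}}\in\CH^{p^{\ii_w}}(\BX)$ modulo $p$: its image in $\Ch(\BX)=\CH(\BX)\otimes_\zz\Fp$ is a nonzero scalar multiple of $c_1(\LL(\rho_w))^{p^{\ii_w}}$, and since that scalar is a unit and the image of a rational cycle under reduction mod $p$ is rational (the reduction map factors through $\resCH$), we conclude that $c_1(\LL(\rho_w))^{p^{\ii_w}}$ itself is rational in $\Ch(\BX)$. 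This is the claim.

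\textbf{Main obstacle.} There is no deep obstacle here — the statement is essentially an immediate corollary of Panin's theorem together with an elementary congruence, which is why it is packaged as a lemma rather than a theorem. The only point requiring a little care is the bookkeeping: one must make sure that for a \emph{fixed} $w$ one uses exactly the single basis vector $g_w$, so that the total Chern class is just a binomial power and no cross terms from other basis elements intervene; and one must invoke that the reduction-mod-$p$ map $\CH(\BX)\to\Ch(\BX)$ carries rational cycles to rational cycles, which is formal since it is induced by base change and commutes with $\resCH$. Once these are in place, the proof is the two steps above.
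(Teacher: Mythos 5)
Your proposal is correct and follows essentially the same route as the paper: Panin's theorem makes $i(\rho_w)\cdot g_w$ rational, its total Chern class $\bigl(1+c_1(\LL(\rho_w))\bigr)^{i(\rho_w)}$ is then a rational cycle whose homogeneous components are rational, and Lucas's congruence shows the coefficient $\binom{i(\rho_w)}{p^{\ii_w}}\equiv q\pmod p$ is a unit in $\Fp$, so the degree-$p^{\ii_w}$ component gives the claim. No gaps.
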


\section{The $J$-invariant}\label{JinvTits}

In this section, we recall briefly 
the definition and the key properties 
of the $J$-invariant of an algebraic group, following
\cite{PSZ}. The definition involves the choice of an ordering for a set of generators 
of $\Ch^*(G_0)$. For adjoint groups of type $\D_4$, as opposed to all other groups, there is no 
natural ordering. So the value of the $J$-invariant depends on a choice, 
which amounts, as we shall explain, to the choice of an underlying algebra with involution (see~\ref{JAI} below).  

\begin{ntt}
Let us denote by 
$\pi\colon\CH^*(\BX)\ra\CH^*(G_0)$ the pull-back induced by the natural projection $G_0\ra \BX$, where $\BX$ is the variety of Borel subgroups of $G_0$. 
By~\cite[\S 4, Rem. 2]{Gr58}, $\pi$ is surjective and its kernel is the ideal $I(\cc)\subset\CH^*(\BX)$ generated by 
non constant elements in the image of the characteristic map (see~\ref{defchar}). 
Therefore, there is an isomorphism of graded rings
\[
 \CH^*(\BX)/I(\cc)\simeq\CH^*(G_0)
\]
In particular, in degree $1$, we get 
\begin{equation}\label{chow}
\CH^1(G_0)\simeq \CH^1(\BX)/(\im\cc^{(1)})\simeq\Lambda_\omega/\T.
\end{equation}

Using this fact, Victor Kac computed in~\cite[Thm.~3]{Kc85} the Chow ring of $G_0$ with $\mathbb{F}_p$-coefficients. 
Namely, he proved it is isomorphic as an $\mathbb{F}_p$-algebra (and even as a Hopf algebra) to 
$$
\Ch^*(G_0)\simeq \Fp[x_1,\ldots,x_r]/(x_1^{p^{k_1}},\dots,x_r^{p^{k_r}})
$$
for some integers $r$ and $k_i$, for $1\leq i\leq r$, which depend on the group $G_0$.   
For each $i$, we let $d_i$ be the degree of the generator $x_i$. We always order the generators
so that $d_1\le\ldots\le d_r$.
Note in particular that the number of generators of degree $1$ is the dimension over $\Fp$ of the vector space 
$\Lambda_\omega/\T\otimes_\zz\Fp$. 

Let $G={}_\xi G_0$ for some $\xi\in Z^1(F,G_0)$.
The $J$-invariant of $G$ is related to the Chow-motif $M(\XX)$ of the corresponding twisted variety $\XX={}_\xi \BX$. 
Namely, the main result (Thm.~5.13) in~\cite{PSZ} asserts that the motive $M(\XX)$ 
splits as a direct sum
of twisted copies of some indecomposable motive $R_p(G)$, and
the Poincar\'e polynomial of $R_p(G)$ over a separable closure of $k$ (see \cite[\S1.3]{PSZ})
is given by 
\begin{equation}
\label{poincare}
P(R_p(G)\times_k k_s,t)=\prod_{i=1}^r \frac{1-t^{d_ip^{j_i}}}{1-t^{d_i}},\mbox{ where}\ 0\le j_i\le k_i.
\end{equation}
As opposed to 
$r$, $d_i$ and $k_i$ for $i\in 1\ldots r$, which only depend on $G_0$, the 
parameters $j_i$ depend on $G$ and reflect its motivic properties. 

Since the variety $\XX$ is uniquely determined by the group $G$ (see e.g.~\cite[\S1]{MPW}), it follows from this observation that the set $\{j_1,\dots,j_r\}$ is a well-defined invariant of the group $G$, i.e. it does not depend on any choice. 
Additionally, one may check from the values given in the table~\cite[Table II]{Kc85} (see also~\cite[\S 4]{PSZ}), 
that, except if $p=2$ and $G$ is adjoint of type $\D_n$ with $n$ even, the degrees $d_i$ are pairwise distinct. 
If so, the increasing hypothesis $d_1<\dots<d_r$ determines a canonical numbering of the generators, and we get a well defined $r$-tuple $J_p(G)=(j_1,j_2,\dots,j_r)$, which is the $J$-invariant of the group $G$. 
As opposed to this, $\Ch(\PGO_{4m})$ has two degree $1$ generators. Hence, to define the $J$-invariant for adjoint groups of type $\D_{2m}$, 
we need to order the set $\{j_1,j_2\}$, as we shall now explain. 
\end{ntt}

\begin{ntt} 
\label{computej1}
We recall from~\cite[Def. 4.6]{PSZ} the computation of the indices $j_i$ corresponding to the degree $1$ generator(s). 
Let $R$ be the pull-back in $\Ch^*(G_0)$ of the rational cycles of $\Ch^*(\BX)$, that is the image of the composition 
\begin{equation}\label{eqq}
R={\mathrm {Im}}\bigl(\Ch^*(\XX)\xrightarrow{\resch} \Ch^*(\XX_0)\xrightarrow{\pi} \Ch^*(G_0)\bigr),
\end{equation}
where the restriction map is as defined in~\ref{charinres}, and $\pi$ denotes as above the pull-back with respect to the projection $G_0\to \XX_0$. 

Let us denote by $s$ the dimension of $\Ch^1(G_0)$ over $\Fp$, and
assume first that $s=1$. We pick a generator $x_1\in \Ch^1(G_0)\simeq \Fp$. 
Then the parameter $j_1$ is the smallest non negative integer $a$ such that 
$x_1^{p^a}$ belongs to $R$ (see~\cite[\S 4.4]{PSZ}). 
Observe in particular that $j_1=0$ if and only if $x_1\in R$. 
\end{ntt}

\begin{ntt}\label{computej2}
We now assume that $p=2$ and $G_0$ is adjoint of type $\D_{2m}$, with $m\geq 2$, that is $G_0=\PGO_{4m}$. 
As mentioned earlier, this is the only case when $s>1$. 
Moreover, in view of~(\ref{chow}), choosing two degree $1$ generators for $\Ch^*(G_0)$ amounts to the choice of two generators of the cocenter $(\Lambda_\omega/\Lambda_r)=\zz/2\zz\times\zz/2\zz$ of the group. 
We pick the classes of the fundamental weights $\omega_1$ and $\omega_{2m}$ (see~\ref{cocenter}), and denote by $x_1$ and $x_2$ the corresponding elements in $\Ch^1(G_0)$,
\begin{equation} 
\label{x12}
  x_1=\pi(h_1),\ \ x_2=\pi(h_{2m}),
\text{ where }h_i=c_1(\LL(\omega_i)),\text{ as in \S\ref{char.section}.}
\end{equation}
Note that this choice is compatible with~\cite{PSZ}, since the relations $x_1^{2^{k_1}}=x_2^{2^{k_2}}=0$ are fulfilled.

By definition~\cite[\S 4.4]{PSZ}, $j_1$ is the smallest non negative integer $a$ such that 
$x_1^{2^a}$ belongs to $R$, and 
$j_2$ is the smallest integer $b$ such that 
$$
x_2^{2^b}+\sum_{0<i\leq 2^b} a_ix_1^ix_2^{{2^b}-i}\in R\text{ for some }
a_i\in \mf.
$$ 
Since $\omega_1+\omega_{n-1}+\omega_n\in \Lambda_r=\T$ (cf.~\ref{cocenter}), replacing $\omega_{2m}$ by $\omega_{2m-1}$ 
amounts to computing $(x_1+x_2)^{2^b}$ instead of $x_2^{2^b}$, and does not affect the value of $j_2$. 
Hence, the pair $(j_1,j_2)$ is well defined, once $x_1$ is chosen. 
\end{ntt}

\begin{ntt}
\label{JAI}
If we are not in the trialitarian case, that is if $m\not=2$, then the generator $x_1$ is canonically defined. 
Indeed, since we numbered roots as in~\cite{Bou}, 
the element $x_1$, which corresponds to the class 
of $\omega_1$ in $(\Lambda_\omega/\Lambda_r)\otimes\mf\simeq\Ch^1(G_0)$ is the unique element of $\Ch^1(G_0)$ killed by the pull-back map 
\[\Ch^1(G_0)\rightarrow \Ch^1(\SO_{4m})\] induced by taking the quotient map modulo the center. 
Hence, this provides an ordering for $\{j_1,j_2\}$, so that the $J$-invariant as an $r$-tuple is well defined in this case. 

As opposed to this, the $J$-invariant of a group of type $\D_4$, as we just defined it, 
does depend on a choice. Indeed, the three non-simply connected covers of $\PGO_8$, that is the special orthogonal group and the 
two half-spin groups are isomorphic. Hence $\omega_1$ and $x_1$ are not canonically defined. 
As a reflect of this fact (see~\cite[\S\,42]{BI}), the twisted group $G={}_\xi G_0$ can be described as the connected component 
of the automorphism group of three possibly non isomorphic degree $8$ algebras with orthogonal involution, which are the Tits algebras 
of the weights $\omega_1$, $\omega_3$ and $\omega_4$, 
$$G=\PGO(A,\sigma_A)=\PGO(B,\sigma_B)=\PGO(C,\sigma_C).$$
So, we use the notation $J(A,\sigma)$ for the triple $(j_1,j_2,j_3)$, where  
$j_1$ and $j_2$ are computed as in~\ref{computej2}, and $A$ is the Tits algebra of the weight $\omega_1$. 
Using a different choice for $\omega_1$, one can also compute $J(B,\sigma_B)$ and $J(C,\sigma_C)$. 

\end{ntt}
\begin{ntt}
\label{triality.rem}
Note that since the group is the same for all three algebras with involution, the variety $\XX$ and its motive do not depend on this choice. 
Hence, it follows from the formula for the Poincar\'e polynomial~\eqref{poincare} that 
$$J(B,\sigma_B),J(C,\sigma_C)\in \{(j_1,j_2,j_3),(j_2,j_1,j_3)\}.$$ 
In~\ref{trial.thm} and~\ref{12.ex} below, we give a more precise statement, and provide explicit examples of algebras with involution 
having isomorphic automorphism groups and different $J$-invariant. 
\end{ntt}

\begin{ntt}\label{extension} 
For further use, we finally recall that the $J$-invariant can only decrease under scalar extension, as explained in~\cite[4.7]{PSZ}, that is
\[
 J_p(G_E)\leq J_p(G),\mbox{ for any extension }E\text{ of the base field } F.
\]
\end{ntt}

%%%%%%%%%%%%%%%%%%%%%%%%%%%%%%%%%%%%%%%%%%%
%%%%%%%%%%%%%%%%%%%%%%%%%%%%%%%%%%%%%%%%%%%%
%%%%%%%%%%%%%%%%%%%%%%%%%%%%%%%%%%%%%%%%%%%%

\section{The parameters of degree one and indices of Tits algebras}
\label{results.sec}

In this section, we prove the main results of the paper, which give connections between the indices of the $J$-invariant corresponding to 
generators of degree $1$ and indices of Tits algebras of the group $G$ (cf. \cite[\S4]{PS09}). 

\begin{ntt}\label{il}
From now on, we let $s$ be the dimension over $\Fp$ of $\Lambda_\omega/\T\otimes \Fp\simeq\Ch^1(G_0)$, and we fix $G_0$ and a prime $p$ so that $s\geq 1$. 
We fix an index $i_1\in\{1,\dots,n\}$ if $s=1$ and two indices $i_1$ and $i_2$ if $s=2$, 
such that the class of $\omega_{i_1}$ (respectively the classes of $\omega_{i_1}$ and $\omega_{i_2}$) 
generate $\Lambda_\omega/\T\otimes\Fp$. 
We let $x_1=\pi(h_{i_1})\in\Ch^1(G_0)$ (and respectively $x_1=\pi(h_{i_1})$ and $x_2=\pi(h_{i_2})\in\Ch^1(G_0)$) be the corresponding generators. 

\end{ntt}
\begin{ntt}
Consider the special elements $g_i$, $i=1\ldots n$ 
of the Steinberg basis of $K_0(\BX)$ (see Definition~\ref{defspecel}). 
Since $c_1(g_i)=h_i-c_1(\LL(\alpha_i)) \in \Ch^1(\BX)$, we have 
$$
\pi(c_1(g_i))=\pi(h_i)-\pi(c_1(\LL(\alpha_i)))=\pi(h_i) \in \Ch^1(G_0).
$$
Hence the generators $x_{\ell}$, $1\leq\ell\leq s$ may also be defined 
by $x_{\ell}=\pi(c_1(g_{i_\ell}))$. 
In view of the isomorphism~(\ref{chow}), it follows that for any $g\in\Pic(\BX)$ its Chern class 
modulo $p$ can be written as 
\begin{equation}
\label{decomp}
c_1(g)=\sum_{\ell=1}^s a_\ell c_1(g_{i_\ell})\mod \im\cc^{(1)}\in \Ch^1(\BX)
\end{equation}
\end{ntt}

As an immediate consequence of rationality of cycles introduced 
in Lemma~\ref{ratcycl} we obtain a different proof of
the first part of \cite[Prop.~4.2]{PS09}: 

\begin{cor}\label{mainineq} 
The first entry $j_1$ of the $J$-invariant is bounded
$$j_1\leq  \ii_{i_1},$$
by the $p$-adic
valuation $\ii_{i_1}$ of the index of the Tits algebra $A_{\omega_{i_1}}$ associated to $\omega_{i_1}$.  
\end{cor}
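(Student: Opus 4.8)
The plan is to unwind the definition of $j_1$ from~\ref{computej1} (or~\ref{computej2}) and combine it with the rationality of a single explicit cycle supplied by Lemma~\ref{ratcycl}. Recall that $j_1$ is defined as the smallest non-negative integer $a$ such that $x_1^{p^a}$ (possibly modified by lower terms involving $x_2$ when $s=2$) lies in $R$, the image of the rational cycles under $\pi$. So to prove $j_1 \leq \ii_{i_1}$ it suffices to exhibit a rational cycle in $\Ch^*(\XX)$ whose image under $\pi\circ\resch$ is $x_1^{p^{\ii_{i_1}}}$, up to the allowed correction terms.

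First I would take the special element $g_{i_1}$ of the Steinberg basis, whose associated weight is $\rho_{s_{i_1}} = \omega_{i_1} - \alpha_{i_1}$. By~\ref{indcond}, the Brauer class $\beta(\bar\rho_{s_{i_1}})$ equals $\beta(\bar\omega_{i_1})$, so the $p$-adic valuation of $i(\rho_{s_{i_1}})$ is exactly $\ii_{i_1}$. Then Lemma~\ref{ratcycl}, applied with $w = s_{i_1}$, tells us that the cycle $c_1(\LL(\rho_{s_{i_1}}))^{p^{\ii_{i_1}}} = c_1(g_{i_1})^{p^{\ii_{i_1}}}$ is rational in $\Ch^*(\BX)$, i.e.\ lies in $\im(\resch)$.

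Next I would push this rational cycle forward via $\pi\colon \Ch^*(\BX) \to \Ch^*(G_0)$. Since $\pi$ is a ring homomorphism and $\pi(c_1(g_{i_1})) = x_1$ by the computation in~\ref{il} (using $c_1(g_i) = h_i - c_1(\LL(\alpha_i))$ and $\pi(c_1(\LL(\alpha_i))) = 0$ because $\alpha_i \in \T$), we get $\pi\bigl(c_1(g_{i_1})^{p^{\ii_{i_1}}}\bigr) = x_1^{p^{\ii_{i_1}}} \in R$. In the case $s = 1$ this immediately gives $j_1 \leq \ii_{i_1}$. In the case $s = 2$ (adjoint type $\D_{2m}$ at $p = 2$), the element $x_1^{2^{\ii_{i_1}}}$ is still of the required form for the $j_1$-condition — which only asks for $x_1^{2^a} \in R$ with no correction terms allowed for the \emph{first} index — so the same conclusion holds.

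The argument is short and the only subtle point is bookkeeping: one must be careful that the cycle coming from Lemma~\ref{ratcycl} is indexed by the \emph{reflection} $s_{i_1}$ (so that its Chern class is a power of a single $c_1(g_{i_1})$, not a more complicated product), and that the Brauer-class identity $\beta(\bar\rho_{s_{i_1}}) = \beta(\bar\omega_{i_1})$ from~\ref{indcond} correctly transfers the valuation. I do not anticipate a serious obstacle; this corollary is essentially an immediate packaging of Lemma~\ref{ratcycl} together with the definition of $j_1$, which is why it is stated as a corollary rather than a theorem.
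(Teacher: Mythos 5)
Your proof is correct and is essentially identical to the paper's: both apply Lemma~\ref{ratcycl} to the weight $\rho_{s_{i_1}}=\omega_{i_1}-\alpha_{i_1}$, use~\ref{indcond} to identify $i(\rho_{s_{i_1}})$ with $i(\omega_{i_1})$, push the rational cycle $c_1(g_{i_1})^{p^{\ii_{i_1}}}$ forward by $\pi$ to get $x_1^{p^{\ii_{i_1}}}\in R$, and conclude from the definition of $j_1$. Your extra remarks on the $s=2$ case and on why $\pi(c_1(g_{i_1}))=x_1$ are accurate but only make explicit what the paper leaves implicit.
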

\begin{proof}
We apply lemma~\ref{ratcycl} to the weight $\rho_{s_{i_1}}=\omega_{i_1}-\alpha_{i_1}$. 
As noticed in~\ref{indcond}, the index $i(\rho_{s_{i_1}})$ is equal to the index 
$i(\omega_{i_1})$ of the Tits algebra $A_{\omega_{i_1}}$. 
Hence, the cycle 
$c_1(g_{i_1})^{p^{\ii_{i_1}}}$ is rational, and its image ${x_{1}}^{p^{\ii_{i_1}}}\in\Ch^*(G_0)$ belongs to $R$.
The inequality then follows from the definition of $j_1$ (see \ref{computej1}). 
\end{proof}

Assume now that $s=2$, so that $p=2$ and $G_0$ is adjoint of type $\D_{2m}$. 
As explained in~\ref{computej2}, we may take $i_1=1$, $i_2=2m$, and define $x_1$ and $x_2$ as in~(\ref{x12}). 
If moreover $m=2$, that is $G_0$ is adjoint of type $\D_4$, the $J$-invariant refers 
to the $J$-invariant of $(A,\sigma)$, where $A$ is the Tits algebra associated to the weight $\omega_1$. 
The computation in the proof above also applies to $\rho_{s_{\ell}}$ for $\ell=2m-1$ and $\ell=2m$. 
Combining the result with the definition 
of $j_2$ given in~\ref{computej2}, we get
\begin{cor} 
\label{mainineq2}
If $p=2$ and $G$ is adjoint of type $\D_{2m}$, so that $s=2$, then 
the second entry $j_2$ of the $J$-invariant is bounded 
$$j_2\leq \min\{\ii_{2m-1},\ii_{2m}\},$$
where $\ii_\ell$ is the $2$-adic valuation of the index of the Tits algebra $A_{\omega_{\ell}}$. 
\end{cor}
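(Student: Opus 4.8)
The plan is to mimic the argument used to prove Corollary~\ref{mainineq}, applying Lemma~\ref{ratcycl} not only to $\rho_{s_1}$ but also to $\rho_{s_{2m-1}}$ and $\rho_{s_{2m}}$, and then to feed the resulting rational cycles into the definition of $j_2$ recalled in~\ref{computej2}. First I would recall from~\ref{indcond} that for a special element $g_\ell$ the Brauer class $\beta(\bar\rho_{s_\ell})$ equals $\beta(\bar\omega_\ell)$, so that $i(\rho_{s_\ell})=i(\omega_\ell)$ and its $2$-adic valuation is $\ii_\ell$. Hence, by Lemma~\ref{ratcycl} applied to $w=s_{2m-1}$ and $w=s_{2m}$, both cycles $c_1(g_{2m-1})^{2^{\ii_{2m-1}}}$ and $c_1(g_{2m})^{2^{\ii_{2m}}}$ are rational in $\Ch^*(\BX)$, hence their images under $\pi$ lie in $R$.

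Next I would translate this into a statement about $x_2$. By~\ref{computej2} and~\ref{il}, $x_2=\pi(h_{2m})=\pi(c_1(g_{2m}))$ in $\Ch^1(G_0)$, so $x_2^{2^{\ii_{2m}}}=\pi(c_1(g_{2m})^{2^{\ii_{2m}}})\in R$; this already gives $j_2\le\ii_{2m}$ directly from the definition of $j_2$, taking all the $a_i$ equal to zero. To get $j_2\le\ii_{2m-1}$ as well, I would use the observation already made at the end of~\ref{computej2}: since $\omega_1+\omega_{2m-1}+\omega_{2m}\in\Lambda_r=\T$, we have $\pi(h_{2m-1})=\pi(h_1)+\pi(h_{2m})=x_1+x_2$ in $\Ch^1(G_0)$, and replacing $\omega_{2m}$ by $\omega_{2m-1}$ does not change the value of $j_2$. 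Applying the same rationality argument to $\rho_{s_{2m-1}}$ shows $(x_1+x_2)^{2^{\ii_{2m-1}}}\in R$. Expanding $(x_1+x_2)^{2^{\ii_{2m-1}}}$ over $\mf$ yields $x_2^{2^{\ii_{2m-1}}}+\sum_{0<i\le 2^{\ii_{2m-1}}}\binom{2^{\ii_{2m-1}}}{i}x_1^ix_2^{2^{\ii_{2m-1}}-i}$, which is exactly of the form required in the definition of $j_2$ in~\ref{computej2}. Hence $j_2\le\ii_{2m-1}$, and combining the two bounds gives $j_2\le\min\{\ii_{2m-1},\ii_{2m}\}$.

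I do not expect any serious obstacle here; the statement is essentially a bookkeeping extension of Corollary~\ref{mainineq}. The one point that requires a little care is making sure the two candidate cycles land in $R$ in the precise algebraic form demanded by the (slightly asymmetric) definition of $j_2$: the cycle coming from $\rho_{s_{2m}}$ directly gives a pure power $x_2^{2^b}$, while the cycle coming from $\rho_{s_{2m-1}}$ gives a power of $x_1+x_2$, and one must invoke the remark in~\ref{computej2} that these two formulations of $j_2$ coincide (equivalently, that the binomial expansion produces admissible lower-order terms $a_i x_1^i x_2^{2^b-i}$). Once that identification is in place, the inequality is immediate from the minimality in the definition of $j_2$.
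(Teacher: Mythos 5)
Your proof is correct and takes essentially the same approach as the paper, which simply observes that the computation in the proof of Corollary~\ref{mainineq} also applies to $\rho_{s_{2m-1}}$ and $\rho_{s_{2m}}$ and then invokes the definition of $j_2$ from~\ref{computej2}. Your explicit treatment of the $\rho_{s_{2m-1}}$ case, expanding $(x_1+x_2)^{2^{\ii_{2m-1}}}$ into the admissible form $x_2^{2^b}+\sum_{0<i\le 2^b}a_ix_1^ix_2^{2^b-i}$, is precisely the detail the paper leaves implicit.
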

 
The next result, which gives an inequality in the other direction, uses the notion of common index, which we introduce now. 
\begin{dfn} Consider the Tits algebras $A_{\omega_{i_\ell}}$ associated to the fundamental weights $\omega_{i_\ell}$, for $1\leq\ell\leq s$, where $i_\ell$ are as in~\ref{il}. 
We define their {\em common index} $\ii_J$ to be the $p$-adic valuation
of the greatest common divisor of all the indices
$\ind(A_{\omega_{i_1}}^{\otimes a_1}\otimes\ldots\otimes A_{\omega_{i_s}}^{\otimes a_s})$,
where at least one of the $a_i$ is coprime to $p$. 
\end{dfn}

\begin{ex}
\label{excommonindex1} 
If $s=1$, then $\ii_J$ is the $p$-adic valuation $\ii_{i_1}$ of the index of the Tits algebra $A_{\omega_{i_1}}$. 
Assume for instance that $G$ is adjoint of type $\D_{2m+1}$. 
As recalled in~\ref{cocenter}, we may take $i_1=2m$ or $i_1=2m+1$, so that $\ii_J$ is the $2$-adic valuation of any component 
$\C_+$ or $\C_-$ of the Clifford algebra of $(A,\sigma)$. From the fundamental relations~\ref{FR}, we know that the two components have the same index.  
\end{ex}
\begin{ex}
\label{excommonindex2}
If $s=2$, we have $p=2$ and $G_0$ is adjoint of type $\D_{2m}$. 
Using~\ref{FR}, one may check that $\ii_J$ is the $p$-adic valuation of the 
greatest common divisor of the indices of $A_{\omega_1}$, $A_{\omega_{2m-1}}$ and $A_{\omega_{2m}}$, 
that is 
\[\ii_J=\mbox{min}\{\ii_1,\ii_{2m-1},\ii_{2m}\}.\]
\end{ex}

We will prove: 
\begin{thm}\label{corj0}
Let $\ii_J$ be the common index of the Tits algebras $A_{\omega_{i_\ell}}$, for $1\leq\ell\leq s$. 

If $\ii_J>0$, then $j_\ell> 0$ for any $\ell$, $1\leq\ell\leq s$.

If $\ii_J>1$ and $p=2$, then for any $\ell$ such that $k_\ell>1$, the corresponding index also satisfies 
$j_\ell>1$.
\end{thm}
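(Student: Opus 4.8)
The plan is to argue by contradiction, exploiting the divisibility structure of rational bundles coming from Panin's theorem together with the fact that rational cycles are closed under the Steenrod operation. Suppose first $\ii_J > 0$ but $j_\ell = 0$ for some $\ell$. By the definition of $j_\ell$ in~\ref{computej1} and~\ref{computej2}, this means that either $x_\ell \in R$ (when $s=1$ or $\ell=1$) or $x_\ell + \sum_{0<i} a_i x_1^i x_\ell^{\,\ast} \in R$ for suitable $a_i$ (when $s=2$ and $\ell=2$); in every case there is a degree~$1$ rational cycle $y \in \Ch^1(G_0)$ whose class in $\Lambda_\omega/\T \otimes \mf$ is a nonzero combination $\sum_\ell a_\ell \bar\omega_{i_\ell}$ with at least one $a_\ell$ coprime to $p$. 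I would lift $y$ to a rational cycle $\tilde y \in \Ch^1(\BX)$, so $\tilde y = \resch(z)$ for some $z \in \Ch^1(\XX)$. Now I use Panin's description~\ref{Panin}: every rational element of $K_0(\BX)$ lies in the lattice with basis $\{i(\rho_w) g_w\}$. Applying this in degree~$1$ via the first Chern class and~\eqref{decomp}, one concludes that the coefficient of $c_1(g_{i_\ell})$ in $\tilde y$ must be divisible by $i(\rho_{s_{i_\ell}}) = i(\omega_{i_\ell})$ — but more carefully, one needs a divisibility for the whole relevant combination, which is precisely where the \emph{common} index $\ii_J$ enters: the combination $\sum_\ell a_\ell \bar\omega_{i_\ell}$ corresponds (under $\beta$) to the Brauer class $\sum_\ell a_\ell \beta(\bar\omega_{i_\ell})$ of $A_{\omega_{i_1}}^{\otimes a_1} \otimes \cdots$, whose index has $p$-adic valuation $\geq \ii_J > 0$. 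Since the coefficient of the corresponding Steinberg basis vector is forced to be divisible by that index, it must vanish mod $p$, contradicting that some $a_\ell$ is coprime to $p$. This handles the first assertion.

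For the second assertion ($\ii_J > 1$, $p = 2$, and $k_\ell > 1$), I would argue similarly but one degree higher in the $\gamma$-filtration, which is why Lemma~\ref{ci} on the first two layers is needed rather than just Panin's theorem in codimension~$1$. Assume $\ii_J > 1$ but $j_\ell \leq 1$ for some $\ell$ with $k_\ell > 1$. By the first part we already know $j_\ell \geq 1$, so $j_\ell = 1$, meaning $x_\ell^{\,2}$ (modified by $x_1$-terms in the $s=2$ case) lies in $R$ but $x_\ell$ does not. So there is a rational cycle in $\Ch^2(\BX)$ whose image in $\Ch^2(G_0)$ equals $x_\ell^2$ up to the ambiguous lower terms. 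I would analyze this using the second Chern class map and the description of the second layer $\gamma^{2}/\gamma^{3}$ of the $\gamma$-filtration on $K_0(\XX)$ supplied by Lemma~\ref{ci}: the relevant generators of $\mathfrak{B}^2$ are the classes $\binom{i(\rho_w)}{2} c_1(\LL(\rho_w))^2$ together with $\binom{i(\rho_w)}{1}$-multiples of codimension-one products, and a Lucas-type computation of $\binom{i(\rho_w)}{2} \bmod 2$ shows this coefficient is odd exactly when $\ii_w \leq 1$. Hence if $\ii_J > 1$, every contributing term has its leading coefficient even, forcing $x_\ell^2$ itself (mod the $\im \cc$ ideal and mod~$2$) to be a square of something already hit in lower codimension — which, combined with the relation $x_\ell^{2^{k_\ell}} = 0$ and $k_\ell > 1$ so that $x_\ell^2 \neq 0$ in $\Ch^2(G_0)$, yields the contradiction.

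The main obstacle I anticipate is the bookkeeping in the $s = 2$, type $\D_{2m}$ case, where $R$ is not simply "cycles whose class is $\bar\omega_{i_\ell}$" but allows an arbitrary admixture of $x_1$-powers; one must check that such lower-order corrections cannot conspire to make a non-divisible combination rational, i.e. that the divisibility forced by Panin on the coefficient of $g_{i_\ell}$ is genuinely controlled by $\ii_J$ and not spoiled by the freedom in the $a_i$. This requires pinning down, via~\ref{indcond}, the Brauer class $\beta(\bar\rho_w)$ for the relevant $w$ and matching it with the definition of common index; I would isolate this as a separate lemma (divisibility of the Steinberg coordinates of a degree-one rational cycle by the common index) and then feed it into both parts. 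The second obstacle is making the passage from "rational in $\Ch^2$" to a statement purely about $R \subset \Ch^*(G_0)$ precise: one must verify that the ideal $I(\cc)$ does not interfere, i.e. that reducing modulo $\im \cc$ does not accidentally create the cycle $x_\ell^2$ from an even multiple — this is where the explicit structure of $\Ch^*(G_0) \simeq \mf[x_1,\dots,x_r]/(x_i^{2^{k_i}})$ and the grading are used to conclude cleanly.
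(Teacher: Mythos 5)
Your plan is correct and follows essentially the same route as the paper: Panin's description of rational bundles, the first two layers of the $\gamma$-filtration via Lemma~\ref{ci}, the identification of $\beta(\bar\rho_w)$ with a tensor product of the Tits algebras $A_{\omega_{i_\ell}}$ so that the common index forces $p\mid i(\rho_w)$ (resp.\ $4\mid i(\rho_w)$, hence $\binom{i(\rho_w)}{2}$ even), and the hypothesis $k_\ell>1$ to guarantee $x_\ell^2\neq 0$ in $\Ch^2(G_0)$. The paper packages this as the direct ideal-theoretic statement $I(\resch)^{(i)}=I(\cc)^{(i)}$ for $i=1,2$ (Lemma~\ref{ideals}) rather than as a contradiction, but the content is identical.
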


Consider the ideal $I(\resch)$ of $\Ch^*(\BX)$ generated by the non constant rational elements. 
For any integer $i$, we let $I(\resch)^{(i)}\subset \Ch^i(\BX)$ be the homogeneous part of degree $i$. 
Since the image of the characteristic map consists of rational elements, we have 
$I(\cc)\subset I(\resch)$. The theorem follows immediately from the following lemma: 
\begin{lem}
\label{ideals}
 If $\ii_J>0$, then $I(\resch)^{(1)}=I(\cc)^{(1)}\subset\Ch^1(\BX)$. 

If $\ii_J>1$ and $p=2$, then $I(\resch)^{(2)}=I(\cc)^{(2)}\subset\Ch^2(\BX)$
\end{lem}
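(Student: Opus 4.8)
The plan is to prove in each case the non-trivial inclusion $I(\resch)^{(i)}\subseteq I(\cc)^{(i)}$; the reverse one is already available from $I(\cc)\subseteq I(\resch)$. Since $\im(\cc)$ is generated by its codimension-one part (see~\ref{defchar}), inside $\Ch^*(\BX)$ one has $I(\cc)^{(1)}=\im(\cc^{(1)})$ and $I(\cc)^{(2)}=\Ch^1(\BX)\cdot\im(\cc^{(1)})$, so both assertions are really about how rational cycles of small degree decompose along the characteristic map.

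The step I would isolate first is the following consequence of the hypothesis on the common index: \emph{for every $w\in W$ with $\ii_w<\ii_J$, the weight $\rho_w$ lies in $\T+p\Lambda_\omega$, equivalently $c_1(\LL(\rho_w))\in\im(\cc^{(1)})\subseteq\Ch^1(\BX)$.} To see this, recall from~\ref{indcond} that $\bar\rho_w=\sum_k\bar\omega_k$, summed over those $k$ with $w^{-1}(\alpha_k)\in\Phi^-$, and that $i(\rho_w)=\ind(\beta(\bar\rho_w))$. Suppose $\rho_w\notin\T+p\Lambda_\omega$. As $\bar\omega_{i_1},\dots,\bar\omega_{i_s}$ form an $\Fp$-basis of $(\Lambda_\omega/\T)\otimes\Fp$, by Nakayama they generate the (finite) $p$-primary part of $\Lambda_\omega/\T$, so the $p$-primary component of $\bar\rho_w$ can be written as $\sum_\ell a_\ell\bar\omega_{i_\ell}$ in that $p$-primary part, with integers $a_\ell$ not all divisible by $p$. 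The $p$-adic valuation $v_p$ of the index of a Brauer class depends only on its $p$-primary component and $\beta$ is additive, so $\beta(\bar\rho_w)$ and $\sum_\ell a_\ell\beta(\bar\omega_{i_\ell})=[A_{\omega_{i_1}}^{\otimes a_1}\otimes\cdots\otimes A_{\omega_{i_s}}^{\otimes a_s}]$ have the same $p$-primary part. Hence $\ii_w=v_p(\ind(A_{\omega_{i_1}}^{\otimes a_1}\otimes\cdots\otimes A_{\omega_{i_s}}^{\otimes a_s}))\ge\ii_J$ by the very definition of $\ii_J$, a contradiction. This is the crucial new input, and also the part I expect to need the most care, namely the bookkeeping with $p$-primary components and the choice of the exponents $a_\ell$.

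For degree $1$: by $\CH^1(\XX)=\Pic(\XX)$ and compatibility of $\resCH$ with first Chern classes, every rational cycle in $\CH^1(\BX)$ is the first Chern class of a rational bundle, so by Panin's description (see~\ref{Panin}) the group of rational cycles of degree $1$ in $\CH^1(\BX)$ equals $\sum_{w\in W}\zz\cdot i(\rho_w)c_1(\LL(\rho_w))$. Reducing modulo $p$, only the terms with $\ii_w=0$ survive, so $I(\resch)^{(1)}\subseteq\Ch^1(\BX)$ is spanned by the classes $c_1(\LL(\rho_w))$ with $\ii_w=0$. As $\ii_J>0$, the step above applies to each such $w$ and gives $c_1(\LL(\rho_w))\in\im(\cc^{(1)})=I(\cc)^{(1)}$.

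For degree $2$ (so $p=2$ and $\ii_J>1$): by the degree-$1$ case (applicable since $\ii_J>0$), $\Ch^1(\BX)$ times a rational cycle of degree $1$, as well as a product of two rational cycles of degree $1$, lies in $\Ch^1(\BX)\cdot\im(\cc^{(1)})=I(\cc)^{(2)}$. So it remains to treat the remaining rational cycles of degree $2$, which by Lemma~\ref{ci} — describing the Chern classes of rational bundles in the first two layers of the $\gamma$-filtration on $K_0(\XX)$ — are spanned by the classes $\binom{i(\rho_w)}{2}c_1(\LL(\rho_w))^2$, $w\in W$. If $\ii_w\ge 2$ then $4\mid i(\rho_w)$, so $\binom{i(\rho_w)}{2}$ is even and this class vanishes in $\Ch^2(\BX)$; if $\ii_w\le 1<\ii_J$, the step above gives $c_1(\LL(\rho_w))\in\im(\cc^{(1)})$, whence $\binom{i(\rho_w)}{2}c_1(\LL(\rho_w))^2\in\Ch^1(\BX)\cdot\im(\cc^{(1)})=I(\cc)^{(2)}$. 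The only genuinely delicate ingredients in this plan are the index step and the input of Lemma~\ref{ci}.
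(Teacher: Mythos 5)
Your proof is correct and follows essentially the same route as the paper's: Panin's theorem together with Lemma~\ref{ci} to identify generators of the rational classes in degrees $1$ and $2$, the Tits homomorphism $\beta$ and the definition of the common index to control $v_p(i(\rho_w))$, and the same dichotomy on $\binom{i(\rho_w)}{2}c_1(\LL(\rho_w))^2$ in degree $2$. The only difference is presentational — you state the key index step in contrapositive form ($\ii_w<\ii_J$ forces $\rho_w\in\T+p\Lambda_\omega$) and phrase the degree-$1$ case via $\Pic$ rather than the layer $\gamma^{1/2}$, which amounts to the same thing.
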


Indeed, let us assume first that $\ii_J>0$. 
By the lemma, any element in $\im(\resch^{(1)})=I(\resch)^{(1)}$ belongs to 
$I(\cc)^{(1)}$, which is in the kernel of $\pi$. 
Therefore, the image of the composition 
\[R^{(1)}=\im\bigl(\Ch^1(\XX)\xrightarrow{\resch^{(1)}}  \Ch^1(\XX_0)\xrightarrow{\pi} \Ch^1(G_0)\bigr)\]
is trivial, $R^{(1)}=\{0\}$. 
From the definition~\ref{computej2} of $j_1$ and $j_2$, this implies that they are both strictly positive.  

The proof of the second part follows the same lines. We write it in details for $s=2$ and 
$k_1,k_2>1$. Assume that $\ii_J>1$. 
Since the image $\im(\resch)^{(2)}$ is contained in $I(\resch)^{(2)}$, the lemma again implies that 
$R^{(2)}=\{0\}$. 
On the other hand, the hypothesis on $k_1$ and $k_2$ guarantees that 
in the truncated polynomial algebra $\mf[x_1,x_2]/(x_1^{2^{k_1}},x_2^{2^{k_2}})\subset \Ch^*(G_0)$, the elements 
$x_1^2$ and $x_2^2+a_1x_1x_2+a_2x_1^2$ are all non trivial. 
Hence they do not belong to $R$, and we get $j_1,j_2>1$. 

The rest of the section is devoted to the proof of Lemma~\ref{ideals}. 
The main tool is the Riemann-Roch theorem, which we now recall.

\subsection*{Filtrations of $K_0$ and the Riemann-Roch Theorem}

\begin{ntt}
Let $X$ be a smooth projective variety over $F$. 
Consider the topological filtration on $K_0(X)$ given by 
\[\tau^iK_0(X)=\langle[{\mathcal O}_V],\ \mbox{codim} V\geq i\rangle,\]
where ${\mathcal O}_V$ is the structure sheaf of the closed subvariety $V$ in $X$. 
There is an obvious surjection 
\[p\colon\CH^i(X)\ra \tau^{i/i+1}K_0(X)=\tau^iK_0(X)/\tau^{i+1}K_0(X),\]
given by $V\mapsto[{\mathcal O}_V]$. 
By the Riemann-Roch theorem without denominators~\cite[\S 15]{Fu}, the $i$-th Chern class induces a map in the opposite direction
\[c_i\colon\tau^{i/i+1}K_0(X)\ra \CH^i(X)\] and the composite $c_i\circ p$ is the multiplication by $(-1)^{i-1}(i-1)!$. 
In particular, it is an isomorphism for $i\leq 2$ (see~\cite[Ex. 15.3.6 ]{Fu}). 

The topological filtration can be approximated by the so-called $\gamma$-filtration. 
Let $c_i^{K_0}$ be the $i$-th Chern class with values in $K_0$ (see~\cite[Ex. 3.2.7(b)]{Fu}, 
or~\cite[\S 2]{Ka98}). We use the convention $c_1^{K_0}([\LL])=1-[{\LL^{v}}]$ for any line bundle $\LL$, where $\LL^{v}$ is the dual of $\LL$, so that in the Chow group, \[c_1(c_1^{K_0}([\LL]))=c_1(\LL).\] 
Similarly, one may compute the second Chern class 
\begin{equation} 
\label{c2}
c_2\bigl(c_1^{K_0}([\LL_1])c_1^{K_0}([\LL_2])\bigr)=-c_1(\LL_1)c_1(\LL_2).
\end{equation}

The $\gamma$-filtration on 
$K_0(X)$ is given by the
subgroups (cf. \cite[\S1]{GZ10})
$$
\gamma^iK_0(X)=
\langle c_{n_1}^{K_0}(b_1)\cdot \ldots \cdot c_{n_m}^{K_0}(b_m) \mid
n_1+\ldots + n_m\ge i,\; b_l\in K_0(X)\rangle,
$$
(see \cite[Ex.15.3.6]{Fu}, \cite[Ch.3,5]{FL}).  
We let 
$\gamma^{i/i+1}(K_0(X))=\gamma^iK_0(X)/\gamma^{i+1}K_0(X)$ 
be the respective quotients, and
$\gamma^*(X)=\oplus_{i\ge 0}\gamma^{i/i+1}(K_0(X))$ the
associated graded ring. 

By~\cite[Prop.~2.14]{Ka98}, $\gamma^i(K_0(X))$ is contained in $\tau^i(K_0(X))$, and they coincide for $i\leq 2$. 
Hence, by the Riemann-Roch theorem, the Chern class $c_i$ with values in $\CH^i(X)$ vanishes on $\gamma^{(i+1)}K_0(X)$,  
and induces a map 
\[c_i\colon\gamma^{i/i+1}(K_0(X))\ra \CH^i(X).\] 
In codimension $1$ we get an isomorphism
$$
c_1\colon \gamma^{1/2}(K_0(X))\xrightarrow{\simeq}\mathrm{CH}^1(X)
$$
which sends for a line bundle $\LL$ the class $c_1^{K_0}(\LL)$ to $c_1(\LL)$.
In codimension $2$ the map
$$
c_2\colon \gamma^{2/3}(K_0(X)) \twoheadrightarrow \mathrm{CH}^2(X),
$$
is surjective and has torsion kernel~\cite[Cor. 2.15]{Ka98}.

Let us now apply this to the varieties $\BX$ and $\XX$ of Borel subgroups of $G_0$ and $G$ respectively. 
Since $K_0(\BX)$ is generated by the line bundles $g_w=[\LL(\rho_w)]$ for $w\in W$, one may check that 
$\gamma^{i/i+1}(\BX)$ is generated by the products \[\{c_1^{K_0}(g_{w_1})\dots c_1^{K_0}(g_{w_i}),\ w_1,\dots,w_i\in W\}.\] 
Moreover, the restriction map commutes with Chern classes, so it induces 
\[
 \resg\colon\gamma^*(\XX)\ra\gamma^*(\BX).
\]
Using Panin's description of the image of the restriction map $\resk$ recalled in~\ref{Panin}, we get that 
the image of $\resg^{(1)}\colon\gamma^{1/2}(\XX)\ra\gamma^{1/2}(\BX) $ is generated by 
the elements $c_1^{K_0}(i(\rho_w)g_w)=i(\rho_w)c_1^{K_0}(g_w)$, for any $w\in W$, while  
the image of $\resg^{(2)}$ is generated by 
\[i(\rho_{w_1})i(\rho_{w_2})c_1^{K_0}(g_{w_1})c_1^{K_0}(g_{w_2})\mbox{ and } 
c_2^{K_0}(i(\rho_w)g_{w})\mbox{ for }w_1,w_2,w\in W.\] 
If the index $i(\rho_w)$ is $1$, then $c_2^{K_0}(i(\rho_w)g_{w})=0$. 
Otherwise, the Whitney sum formula gives 
\[
 c_2^{K_0}(i(\rho_w)g_w)=\binom{i(\rho_w)}{2}c_1^{K_0}(g_w)^2.
\]
Applying the morphisms $c_1$ and $c_2$, and using~(\ref{c2}), we now get 
\begin{lem}  \label{ci}
 The subgroup $c_1\bigl(\im(\resg^{(1)})\bigr)\in\CH^1(\BX)$ is generated by 
$i(\rho_w)c_1(g_w)$, for all $w\in W$. 
The subgroup $c_2\bigl(\im(\resg^{(2)})\bigr)\in\CH^2(\BX)$ is generated by the elements 
$i(\rho_{w_1})i(\rho_{w_2})c_1(g_{w_1})c_1(g_{w_2})$ and $\binom{i(\rho_w)}{2}c_1(g_w)^2$ for all $w_1,w_2,w\in W$. 
\end{lem}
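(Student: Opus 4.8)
The plan is to push the generating sets for $\im(\resg^{(1)})$ and $\im(\resg^{(2)})$, exhibited in the paragraph immediately preceding the statement, through the Chern class maps $c_1\colon\gamma^{1/2}(\BX)\to\CH^1(\BX)$ and $c_2\colon\gamma^{2/3}(\BX)\to\CH^2(\BX)$. Since $\resg$ respects the grading, $\resg^{(i)}$ lands in $\gamma^{i/i+1}(\XX)$, and since $c_1$, $c_2$ are group homomorphisms on the relevant graded pieces (this is exactly the Riemann--Roch discussion recalled above, via~\cite[Prop.~2.14, Cor.~2.15]{Ka98}), the subgroups $c_i\bigl(\im(\resg^{(i)})\bigr)$ are generated by the images of the chosen generators. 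So it suffices to evaluate $c_1$ and $c_2$ on each generator, which is a direct computation with Chern classes of line bundles.

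For the codimension one statement, I would recall that $\im(\resg^{(1)})$ is generated by the elements $i(\rho_w)c_1^{K_0}(g_w)$ for $w\in W$; applying $c_1$ and using the normalization $c_1(c_1^{K_0}([\LL]))=c_1(\LL)$ together with additivity sends each of these to $i(\rho_w)c_1(g_w)$, which gives the first assertion. For codimension two, $\im(\resg^{(2)})$ is generated by the products $i(\rho_{w_1})i(\rho_{w_2})c_1^{K_0}(g_{w_1})c_1^{K_0}(g_{w_2})$ and by the classes $c_2^{K_0}(i(\rho_w)g_w)$. On a product of two $c_1^{K_0}$'s, formula~(\ref{c2}) gives $c_2\bigl(c_1^{K_0}(g_{w_1})c_1^{K_0}(g_{w_2})\bigr)=-c_1(g_{w_1})c_1(g_{w_2})$, so the first family maps, up to an irrelevant sign, to $i(\rho_{w_1})i(\rho_{w_2})c_1(g_{w_1})c_1(g_{w_2})$. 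For the second family: if $i(\rho_w)=1$ then $c_2^{K_0}(i(\rho_w)g_w)=0$; otherwise the Whitney sum formula applied to the direct sum of $i(\rho_w)$ copies of the line bundle $\LL(\rho_w)$ yields $c_2^{K_0}(i(\rho_w)g_w)=\binom{i(\rho_w)}{2}c_1^{K_0}(g_w)^2$, and applying $c_2$ and~(\ref{c2}) once more sends this to $-\binom{i(\rho_w)}{2}c_1(g_w)^2$. Collecting the two families yields precisely the claimed generating set for $c_2\bigl(\im(\resg^{(2)})\bigr)$.

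I do not expect a genuine obstacle here: essentially all the substantive input (Panin's description of rational bundles, the structure of $\gamma^{i/i+1}(\BX)$ in terms of the $c_1^{K_0}(g_w)$, and the identification of $c_i$ on $\gamma^{i/i+1}$) has already been assembled in the preceding material, and what remains is the bookkeeping above. The one point I would be careful to state cleanly is the well-definedness and additivity of $c_1$ and $c_2$ on the graded quotients, so that "applying $c_i$ to the generators" is legitimate and the image is generated by the images of the generators.
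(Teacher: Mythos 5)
Your proposal is correct and follows exactly the route the paper takes: it reads off the generators of $\im(\resg^{(1)})$ and $\im(\resg^{(2)})$ from Panin's theorem, computes $c_2^{K_0}(i(\rho_w)g_w)=\binom{i(\rho_w)}{2}c_1^{K_0}(g_w)^2$ via the Whitney sum formula, and then applies $c_1$ and $c_2$ together with formula~(\ref{c2}). The extra care you take about the additivity of $c_1$, $c_2$ on the graded pieces (so that images of generators generate the image) is exactly the point the paper leaves implicit, and the sign in~(\ref{c2}) is indeed irrelevant for the generated subgroup.
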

\end{ntt}

\subsection*{Proof of Lemma~\ref{ideals}}

Since the image of the characteristic map consists of rational elements (see~\ref{charinres}), we already know that 
$I(\cc)\subset I(\resch)$. We now prove the reverse inclusions for the homogeneous parts of degree $1$ and $2$ under the relevant hypothesis on the common index $\ii_J$. 
Note that since $c_1$ and $c_2$ are both surjective, and commute with restriction maps, one has
 \[\im\bigl(\resch^{(k)}\bigr)=c_k\bigl(\im(\resg^{(k)})\bigr),\mbox{ for }k=1,2.\]
In degree $1$, we have $I(\resch)^{(1)}=\im(\resch^{(1)})$, so to prove the first part of the lemma, we have to prove that 
if $\ii_J>0$, then for any $w\in W$, the element $i(\rho_w)c_1(g_w)$ belongs, after tensoring with $\Fp$, to $I(\cc)^{(1)}=\im\cc^{(1)}$. 
Let us write \[c_1(g_w)=\sum_{\ell=1}^s a_\ell c_1(g_{i_\ell})\mod \im\cc^{(1)},\] as in~(\ref{decomp}). 
If all the $a_\ell\in\Fp$ are trivial, we are done, so we may assume at least one of them is invertible in $\Fp$. 
The weights $\rho_w$ and $\rho_{i_\ell}$ satisfy the same relation 
\[\rho_w=\sum_{\ell=1}^s a_\ell\rho_{i_\ell}\mod \T\otimes_\zz\Fp.\] 
Applying the morphism $\beta$, we get that the $p$-primary part of the Brauer class $\beta(\bar\rho_w)$ 
coincides with the $p$-primary part of the Brauer class of $\otimes_{\ell=1}^sA_{\omega_{i_\ell}}^{a_\ell}$ (see~\ref{indcond}). 
The hypothesis on $\ii_J$ guarantees that this index of this algebra is divisible by $p$. 
Hence $i(\rho_w)$, which is the index of $\beta(\bar\rho_w)$ also is divisible by $p$, so that 
$i(\rho_w)c_1(g_w)=0$ in the Chow group $\Ch^1(\BX)$ modulo $p$, and we are done. 

Let us now assume that $p=2$ and $\ii_J>1$. 
The homogeneous part $I(\resch)^{(2)}$ decomposes as 
\[I(\resch)^{(2)}=\im\bigl(\resch^{(1)}\bigr)\Ch^1(\BX)+ \im\bigl(\resch^{(2)}\bigr).\]
By the first part of the Lemma, we already know that 
\[\im\bigl(\resch^{(1)}\bigr)\Ch^1(\BX)\subset I(\cc).\]
Hence it remains to prove that $\im(\resch^{(2)})=c_2(\im\resg^{(2)})\subset I(\cc)^{(2)}$. 
The proof for the degree $1$ part already shows that $i(\rho_{w_1})i(\rho_{w_2})c_1(g_{w_1})c_1(g_{w_2})$ belongs 
to $I(\cc)^{(2)}$. The same argument extends to $\binom{i(\rho_w)}{2}c_1(g_w)^2$. 
Indeed, if the coefficients $a_\ell$ are not all trivial modulo $2$, the condition on the common index 
now implies that $4$ divides $i(\rho_w)$, so that $\binom{i(\rho_w)}{2}$ is zero modulo $2$. 

%%%%%%%%%%%%%%%%%%%%%%%%%%%%%%%%%%%%%%%%%%%%%%%
%%%%%%%%%%%%%%%%%%%%%%%%%%%%%%%%%%%%%%%%%%%%%%%
%%%%%%%%%%%%%%%%%%%%%%%%%%%%%%%%%%%%%%%%%%%%%

\section{Applications to quadratic forms}\label{applquad}

The rest of the paper is now devoted to explicit computations of the $J$-invariant in some low dimensional examples.
Mostly, we are interested in the trialitarian case, that is we want to compute the $J$-invariant of degree $8$ algebras
with orthogonal involution (see Thm.~\ref{trial.thm} below). As explained in Cor.~\ref{values.cor}, this computation has some interesting consequences on the set of possible values for the $J$-invariant. 

The starting point for this computation is the quadratic form case. 
In small dimension, the $J$-invariant can be computed in terms of classical invariants of quadratic forms such as the Schur index of its Clifford algebra and the Witt index. 
This was already noticed (see~\cite[\S 88]{EKM}), and can be recovered very easily using our inequalities~\ref{corj0}. 
Since the results are used in the next section, we do include precise statements and proofs; on the way, we also provide an explicit example showing that the splitting pattern is a finer invariant than the $J$-invariant in dimension $\geq 10$.

Let $\varphi$ be a quadratic form of even dimension $2n$. We always assume that $\varphi$ has trivial discriminant, so that its 
special orthogonal group $\SO(\varphi)$ satisfies condition~\ref{inner}. 
We define the $J$-invariant of $\varphi$ as follows: 
\begin{dfn}
Let $\varphi$ be a $2n$ dimensional quadratic form over $F$, with trivial discriminant. 
Its $J$-invariant is 
$$J(\varphi)=J_2\bigl(\SO(\varphi)\bigr).$$
\end{dfn}
\begin{rem}
(i)The $J$-invariant of a quadratic form was initially defined by Vishik in~\cite[Def 5.11]{Vi05}. A dual version 
is also given in~\cite[\S 88]{EKM}. Note that these are three different versions of the same invariant, meaning that any of them determines the other two. We refer the reader to \cite[\S 4.8]{PSZ} for a more precise statement, and to the table below for an example of the dictionary. 

(ii)Let $\varphi_0$ be any non degenerate subform of $\varphi$ of codimension $1$. Since $\varphi$ has trivial discriminant, $\varphi$ and $\varphi_0$ have the same splitting fields. In particular, each of them splits over the function field of the maximal orthogonal grassmanian of the other. 
Therefore by the comparison lemma~\cite[5.18(iii)]{PSZ}, the corresponding indecomposable motives $R_2\bigl(\SO(\varphi_0)\bigr)$ and 
$R_2\bigl(\SO(\varphi)\bigr)$ are isomorphic. 
Hence they have the same Poincar\'e polynomial, and by~(\ref{poincare}), it follows that $\SO(\varphi_0)$ and $\SO(\varphi)$ have the same $J$-invariant. 
Since any odd-dimensional form can be embedded in an even dimensional form with trivial discriminant, we only consider the even-dimensional case in the sequel. 
\end{rem}

The argument given in remark (ii) above also applies to Witt equivalent quadratic forms, proving the first assertion of the following proposition(see also~\cite[\S 88]{EKM}): 
\begin{prop}
\label{witt.prop}
Let $\varphi$ and $\varphi'$ be two Witt-equivalent even-dimensional quadratic forms with trivial discriminant. 
All the non trivial indices in their $J$-invariant are equal. 

Moreover, $J(\varphi)=(0,\ldots,0)$ if and only if $\varphi$ is hyperbolic. 
\end{prop}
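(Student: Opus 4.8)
The plan is to prove the two assertions of Proposition~\ref{witt.prop} separately, though the first is essentially already settled by the preceding discussion. For the first assertion, let $\varphi$ and $\varphi'$ be Witt-equivalent even-dimensional forms with trivial discriminant. Up to adding hyperbolic planes, one of them is obtained from the other by adding a hyperbolic plane, so it suffices to treat $\varphi'=\varphi\perp\mathbb{H}$. The argument of remark~(ii) then carries over: $\varphi$ and $\varphi'$ have the same splitting fields (since adding a hyperbolic plane changes neither the Clifford algebra up to the relevant component nor the splitting behaviour), so each splits over the function field of the maximal orthogonal grassmannian of the other; by the comparison lemma \cite[5.18(iii)]{PSZ}, the indecomposable motives $R_2(\SO(\varphi))$ and $R_2(\SO(\varphi'))$ are isomorphic, hence have the same Poincar\'e polynomial, and by~\eqref{poincare} the non-trivial parts of the two $J$-invariants coincide. (Strictly, one must be a little careful that the number $r$ of generators and the degrees $d_i$ are the same for $\SO_{2n}$ and $\SO_{2n+2}$; for quadratic forms in the relevant range this is read off from Kac's tables, and the matching of motives forces the tuples to agree after deleting entries where $d_i$ ranges beyond the rank of the smaller group — these entries are necessarily $0$.)

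For the second assertion, one direction is immediate: if $\varphi$ is hyperbolic then $\XX$ splits, the restriction map $\resch$ is surjective, so $R=\Ch^*(G_0)$ and every $j_i=0$. For the converse, suppose $J(\varphi)=(0,\dots,0)$. The plan is to argue by induction on the dimension $2n$, using that $j_1=0$ forces an isotropic vector and then passing to the form of dimension $2n-2$ obtained by splitting off a hyperbolic plane. Concretely, $j_1=0$ means $x_1\in R$, i.e. the degree-one generator of $\Ch^1(G_0)$ is rational. By the dictionary between $J(\varphi)=J_2(\SO(\varphi))$ and Vishik's version \cite[Def.~5.11]{Vi05} recalled in the remark, $j_1=0$ is equivalent to the first discrete invariant of $\varphi$ vanishing, which classically means $\varphi$ is isotropic — the cleanest way to see this is that $j_1>0$ whenever $\varphi$ is anisotropic, by Springer's theorem / Karpenko's lower bounds, or directly: an anisotropic form of trivial discriminant has no rational cycle of codimension one on its quadric. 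So $\varphi$ is isotropic, write $\varphi=\varphi_1\perp\mathbb{H}$ with $\varphi_1$ of dimension $2n-2$, still of trivial discriminant. By the first assertion, $J(\varphi_1)$ has the same non-trivial entries as $J(\varphi)$, hence $J(\varphi_1)=(0,\dots,0)$ as well, and by induction $\varphi_1$ is hyperbolic, whence $\varphi$ is hyperbolic. The base case $2n=2$ is trivial.

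The main obstacle is the converse direction, and specifically the step ``$j_1=0\Rightarrow\varphi$ isotropic''. This is where I would expect to have to invoke something external rather than the machinery of this paper: the cleanest citation is \cite[\S88]{EKM} (or Vishik \cite{Vi05}), where it is shown that the first parameter of the $J$-invariant of a quadratic form detects isotropy — equivalently, that the Chow group of a non-split anisotropic quadric with trivial discriminant contains no rational class in the ``first'' codimension-one position. Alternatively one can bypass the dictionary and argue with $R^{(1)}$ directly: if $\varphi$ is anisotropic then the quadric $X_\varphi$ is anisotropic, $\CH^1(X_\varphi)$ is generated by the hyperplane class $h$, the rational classes in $\CH^1$ are the multiples of $2h$ (since the rational point, equivalently the class of a point of the quadric, is not rational over $F$), and pushing forward to $G_0$ one finds $x_1\notin R$, i.e. $j_1\geq 1$. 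I would present the proof with the $\textsf{EKM}$ citation to keep it short, and remark parenthetically that the self-contained argument via rational cycles on the quadric is available. Everything else is routine: the induction and the easy direction cost nothing once the isotropy criterion is in hand.
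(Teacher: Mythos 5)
Your proof of the first assertion is the paper's own argument (the remark~(ii) mechanism applied to Witt-equivalent forms: common splitting fields, the comparison lemma \cite[5.18(iii)]{PSZ}, and the Poincar\'e polynomial formula~\eqref{poincare}), so that part is fine.

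The converse direction of the second assertion, however, has a genuine gap: the pivotal step ``$j_1=0\Rightarrow\varphi$ isotropic'' is false. The index $j_1$ of $J(\varphi)=J_2(\SO(\varphi))$ is controlled by the Clifford algebra, not by isotropy: Corollary~\ref{qf.cor} gives $j_1\leq \ii_S$, so any anisotropic form with split Clifford algebra has $j_1=0$. A concrete counterexample appears in the paper's own table for dimension $8$: an anisotropic $3$-fold Pfister form has $J(\varphi)=(0,1)$. (It is $j_2$, not $j_1$, that detects isotropy in degree $8$, cf.\ Proposition~\ref{qf8}.) The ``self-contained'' argument you sketch as backup is also incorrect: for a smooth quadric of dimension $\geq 2$, $\CH^1$ is generated by the hyperplane class, which is the restriction of a class from the ambient projective space and hence is always rational, whether or not the quadric has a point; rationality fails only in higher codimension. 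Since your induction cannot get started without producing an isotropic vector over the base field, the argument collapses.

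The paper's proof avoids this entirely: by \cite[6.7]{PSZ}, a form with trivial $J$-invariant becomes hyperbolic over some odd-degree extension (because the upper motive $R_2(\SO(\varphi))$ is then a Tate motive), and Springer's theorem \cite[18.5]{EKM} descends hyperbolicity along odd-degree extensions. If you want to keep an inductive flavour, you would need an input of at least that strength; nothing weaker than the full triviality of $J(\varphi)$ forces isotropy over $F$.
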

\begin{proof}
The second assertion follows from Springer's theorem~\cite[18.5]{EKM}. 
Indeed, as explained in~\cite[6.7]{PSZ}, a quadratic form with trivial $J$-invariant is hyperbolic over some odd-degree extension of the base field. 
\end{proof}

In small dimension, the $J$-invariant of a quadratic form $\varphi$ can be explicitly computed in terms of the index of its full Clifford algebra $\C(\varphi)$. 
More precisely, we let $\ii_S$ be the $2$-adic valuation of the index of $\C(\varphi)$. 
Since $\varphi$ has trivial discriminant, its even Clifford algebra has center $F\times F$ and splits as 
$\C_0(\varphi)=C\times C$ for some central simple algebra $C$ which is Brauer-equivalent to $\C(\varphi)$, that is $\C(\varphi)\simeq M_2(C)$. 
In particular, it follows that $\ii_S\leq n-1$. 
In this setting, we have $s=1$, and $C$ is the Tits algebra associated to a generator of $\Lambda_\omega/\T\otimes \mf$. 
So the common index $\ii_J$ is given by $\ii_J=\ii_S$. 
With this notation, the inequalities~\ref{mainineq} and~\ref{corj0} can be translated as follows: 
\begin{cor}
Let $\varphi$ be a $2n$ dimensional quadratic form with trivial discriminant. 
The $2$-adic valuation $\ii_S$ of its Clifford algebra and the first index $j_1$ of its $J$-invariant are related as follows: 
\begin{enumerate}
\label{qf.cor}
\item $j_1\leq \ii_S$.  
\item If $n\geq 2$, and $\ii_S>0$, then $j_1>0$.
\item If $n\geq 3$ and $\ii_S>1$, then $j_1>1$.
\end{enumerate} 
\end{cor}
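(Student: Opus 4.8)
The plan is to deduce Corollary~\ref{qf.cor} directly from the general inequalities already established, by specializing them to the case $G=\SO(\varphi)$ and $p=2$. The first step is to identify the relevant invariants. For a $2n$-dimensional form $\varphi$ with trivial discriminant, its special orthogonal group is an inner twisted form of $\SO_{2n}$, which is of type $\D_n$ but is \emph{not} adjoint: its cocenter is a proper quotient, and one checks that $\Lambda_\omega/\T\otimes\mf$ is one-dimensional, so $s=1$. The unique (up to the relevant identification) generator of $\Ch^1(G_0)$ corresponds to a fundamental weight $\omega_{i_1}$ whose associated Tits algebra is the Brauer class of $C$, where $\C_0(\varphi)=C\times C$; this is the content of the paragraph preceding the statement, giving $\ii_J=\ii_S$.

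Once this dictionary is in place, part (1) is an immediate application of Corollary~\ref{mainineq}: that result gives $j_1\le\ii_{i_1}$, and $\ii_{i_1}$ is by definition the $2$-adic valuation of the index of the Tits algebra $A_{\omega_{i_1}}$, which is $\ii_S$. For parts (2) and (3) I would invoke Theorem~\ref{corj0} with $p=2$ and $s=1$: if $\ii_J=\ii_S>0$ then $j_1>0$, and if $\ii_S>1$ then $j_\ell>1$ for every $\ell$ with $k_\ell>1$, in particular for $\ell=1$ \emph{provided} $k_1>1$. So the only thing left to verify is that the Kac numbers satisfy $k_1\ge1$ under the hypothesis $n\ge2$ in part (2), and $k_1\ge2$ under the hypothesis $n\ge3$ in part (3). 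This is a finite bookkeeping check: one reads off from Kac's tables (or from~\cite[\S4]{PSZ}) the value of $k_1$ for $\SO_{2n}$ of type $\D_n$, namely $k_1=\lfloor\log_2(n-1)\rfloor+1$ (equivalently, $2^{k_1-1}\le n-1<2^{k_1}$), so $k_1\ge1$ exactly when $n\ge2$ and $k_1\ge2$ exactly when $n\ge3$. I would state this as a short computation rather than reproduce the whole table.

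The main obstacle — really the only subtlety — is making sure the two hypotheses $n\ge2$ and $n\ge3$ are precisely the conditions $k_1\ge1$ and $k_1\ge2$, since Theorem~\ref{corj0}'s second clause carries the side condition $k_\ell>1$ which would be vacuous otherwise. I would also remark, for part (3), that the hypothesis $\ii_S>1$ already forces $n-1\ge2$, i.e.\ $n\ge3$, because of the bound $\ii_S\le n-1$ recalled above; so the dimension hypothesis in (3) is in fact automatic, though it does no harm to state it. With these observations the proof is a two-line invocation of~\ref{mainineq} and~\ref{corj0} plus the remark identifying $\ii_J$ with $\ii_S$ and the elementary evaluation of $k_1$.
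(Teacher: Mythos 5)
Your proposal is correct and follows exactly the route the paper takes: the paper gives no separate proof, stating only that the corollary is the translation of Corollary~\ref{mainineq} and Theorem~\ref{corj0} to $G=\SO(\varphi)$ via the identifications $s=1$ and $\ii_J=\ii_S$ made in the preceding paragraph. Your additional bookkeeping — that $k_1=\lfloor\log_2(2n-1)\rfloor\geq 1$ iff $n\geq 2$ and $\geq 2$ iff $n\geq 3$, and that $\ii_S\leq n-1$ makes the hypothesis of (3) automatic — correctly supplies the details the paper leaves implicit.
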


Assume now that $\varphi$ has dimension $4$ or $6$, that is $n=2$ or $3$.
By table~\cite[4.13]{PSZ}, the $J$-invariant of $\varphi$ consists of a single integer, 
$J(\varphi)=(j_1)$, which is bounded by $1$ (respectively $2$) if $\varphi$ has dimension $4$ (respectively $6$). 
Therefore, by Corollary~\ref{qf.cor}, we have:  
\begin{prop}
Let $\varphi$ be a quadratic form of dimension $4$ or $6$ with trivial discriminant. 
Its $J$-invariant is $J(\varphi)=(\ii_S)$. 
\end{prop}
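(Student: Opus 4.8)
The plan is to combine the general bound $j_1 \le \ii_{i_1}$ from Corollary~\ref{mainineq} with the lower bounds from Theorem~\ref{corj0}, and then invoke the explicit table of possible $J$-invariants for quadratic forms of dimension $4$ and $6$ to pin down the single parameter exactly. First I would observe that for a $2n$-dimensional quadratic form with trivial discriminant, the relevant group is $\SO(\varphi)$, of type $\D_n$, which is \emph{not} adjoint, so $s = \dim_{\mf}(\Lambda_\omega/\T\otimes\mf) = 1$; thus the $J$-invariant is a single integer $J(\varphi) = (j_1)$, and the common index $\ii_J$ equals $\ii_S$, the $2$-adic valuation of $\ind \C(\varphi)$, by the identification of the Tits algebra attached to a generator of $\Lambda_\omega/\T\otimes\mf$ with the Brauer-equivalent algebra $C$ discussed just above the statement.

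Next I would carry out the numerical comparison. By the table~\cite[4.13]{PSZ}, $j_1 \le k_1$ with $k_1 = 1$ when $n = 2$ and $k_1 = 2$ when $n = 3$. By Corollary~\ref{qf.cor}(1) we have $j_1 \le \ii_S$. For the reverse inequality: if $\ii_S = 0$ there is nothing to prove since $j_1 \ge 0 = \ii_S$ forces $j_1 = \ii_S = 0$ once combined with $j_1 \le \ii_S$. If $\ii_S = 1$, then $n \ge 2$ (indeed $\ii_S \le n-1$ forces $n \ge 2$), so Corollary~\ref{qf.cor}(2) gives $j_1 > 0$, i.e. $j_1 \ge 1$; together with $j_1 \le \ii_S = 1$ this yields $j_1 = 1 = \ii_S$. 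If $\ii_S = 2$, then $\ii_S \le n - 1$ forces $n \ge 3$, so Corollary~\ref{qf.cor}(3) gives $j_1 > 1$, i.e. $j_1 \ge 2$; combined with the table bound $j_1 \le k_1 = 2$ (in the $n=3$ case) this yields $j_1 = 2 = \ii_S$. Since $\ii_S \le n - 1 \le 2$ in the cases $n = 2, 3$, these are all the possibilities, so in every case $j_1 = \ii_S$, which is the claim $J(\varphi) = (\ii_S)$.

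The only genuinely delicate point is bookkeeping: one must be sure that for each value of $\ii_S$ the dimension hypothesis needed to apply the relevant part of Corollary~\ref{qf.cor} is automatically satisfied, and here the inequality $\ii_S \le n - 1$ (which itself follows from $\C(\varphi) \simeq M_2(C)$ with $\deg C = 2^{n-1}\cdot(\text{odd})$, forcing $\ind C \mid 2^{n-1}$) does exactly that. I expect no real obstacle beyond this; everything else is an immediate concatenation of already-established inequalities with the finite table of admissible tuples. One could equally phrase the argument by noting that $\ii_S \in \{0, 1, 2\}$ when $n \in \{2, 3\}$ and that Corollary~\ref{qf.cor} squeezes $j_1$ into the single value $\ii_S$ in each case, which is the form I would write up.
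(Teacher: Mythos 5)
Your proposal is correct and follows essentially the same route as the paper: the paper likewise deduces the proposition immediately from Corollary~\ref{qf.cor} together with the bound $j_1\leq 1$ (resp.\ $j_1\leq 2$) coming from the table in dimension $4$ (resp.\ $6$), squeezing $j_1$ to the single value $\ii_S$ in each case. Your extra care about $\ii_S\leq n-1$ guaranteeing the dimension hypotheses of the corollary is exactly the (implicit) bookkeeping the paper relies on.
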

We can give a more precise description of the quadratic form $\varphi$ in each case. 
In dimension $4$, the Clifford algebra of $\varphi$ is Brauer-equivalent to a quaternion algebra $Q$ over $F$, and $\varphi$ is 
similar to the norm form $n_Q$ of $Q$, which is a $2$-fold Pfister form. It is hyperbolic if $Q$ is split and anisotropic otherwise. 

If $\varphi$ has dimension $6$, its Clifford algebra is a biquaternion algebra $B$ over $F$, and 
$\varphi$ is an Albert form of $B$. 
If $J(\varphi)=(2)$, or equivalently $B$ is division, then $\varphi$ is anisotropic. 
If $J(\varphi)=(1)$, or equivalently $B$ is Brauer equivalent to a non split quaternion algebra $Q$, then 
$\varphi$ is similar to $n_Q\oplus{\mathbb H}$. If $J(\varphi)=(0)$, or equivalently $B$ is split, then $\varphi$ is hyperbolic. 
Hence we get:
\begin{cor}
\label{qf6}
A quadratic form $\varphi$ of dimension $6$ and trivial discriminant is anisotropic if and only if its $J$-invariant is 
$J(\varphi)=(2)$; it is isotropic and non-hyperbolic if and only if $J(\varphi)=(1)$. 
\end{cor}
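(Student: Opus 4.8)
The plan is to reduce everything to the degree $4$ and $6$ cases already handled by the preceding proposition, together with the explicit description of low-dimensional forms with trivial discriminant. Recall that we want to show: a $6$-dimensional form $\varphi$ with trivial discriminant is anisotropic iff $J(\varphi)=(2)$, and isotropic non-hyperbolic iff $J(\varphi)=(1)$. By the preceding proposition, $J(\varphi)=(\ii_S)$, where $\ii_S$ is the $2$-adic valuation of the index of the Clifford algebra $\C(\varphi)$, which in dimension $6$ is a biquaternion algebra $B$. So the statement is equivalent to: $\varphi$ is anisotropic iff $B$ is division (i.e. $\ind B = 4$, so $\ii_S=2$), and $\varphi$ is isotropic non-hyperbolic iff $B$ is Brauer-equivalent to a non-split quaternion algebra (i.e. $\ind B = 2$, so $\ii_S=1$).

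First I would invoke the classical theory of Albert forms: a $6$-dimensional quadratic form $\varphi$ with trivial discriminant is (similar to) an Albert form of the biquaternion algebra $B=\C(\varphi)$, and Albert's theorem states that $\varphi$ is anisotropic if and only if $B$ is a division algebra. This is exactly the reference already cited in the text (see \cite[\S 16]{BI} or the standard literature on Albert forms). That settles the first equivalence: $\varphi$ anisotropic $\iff$ $\ind B = 4$ $\iff$ $\ii_S = 2$ $\iff$ $J(\varphi)=(2)$.

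Next I would treat the isotropic case. If $\varphi$ is isotropic, write $\varphi \simeq \psi \perp \mathbb{H}$ with $\psi$ a $4$-dimensional form; since $\varphi$ has trivial discriminant, so does $\psi$, and $\C(\psi)$ is Brauer-equivalent to $\C(\varphi)=B$. By the dimension $4$ analysis recalled just above the corollary, $\C(\psi)$ is Brauer-equivalent to a quaternion algebra $Q$, and $\psi$ is similar to the norm form $n_Q$. Hence $\varphi$ is similar to $n_Q \perp \mathbb{H}$ up to Witt equivalence, and $B$ is Brauer-equivalent to $Q$, so $\ind B \in \{1,2\}$ and $\ii_S \in \{0,1\}$. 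Then $\varphi$ is hyperbolic iff $n_Q$ is hyperbolic iff $Q$ is split iff $\ii_S = 0$ iff $J(\varphi)=(0)$; and $\varphi$ is isotropic but non-hyperbolic iff $Q$ is non-split iff $\ii_S = 1$ iff $J(\varphi)=(1)$. Combining with the anisotropic case, the three possibilities for $J(\varphi)$, namely $(0)$, $(1)$, $(2)$, correspond respectively to $\varphi$ hyperbolic, isotropic non-hyperbolic, and anisotropic, which is the claim.

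The only genuine input beyond bookkeeping is Albert's theorem on the anisotropy of Albert forms, which is classical; the rest is a matter of combining the already-established identity $J(\varphi)=(\ii_S)$ with the Witt-invariance of the $J$-invariant (Proposition~\ref{witt.prop}) to pass freely between $\varphi$ and its anisotropic part. The main thing to be careful about is that "$\ind B = 4$" is what $\ii_S = 2$ means (a priori $\ii_S = 2$ only forces $4 \mid \ind B$, but since $B$ is biquaternion, $\ind B \le 4$, so equality holds), and similarly that the three cases are exhaustive because $\ii_S \le n-1 = 2$ in dimension $6$. With those observations in place the corollary is immediate.
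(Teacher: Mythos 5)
Your proposal is correct and follows essentially the same route as the paper: both rest on the identity $J(\varphi)=(\ii_S)$ from the preceding proposition together with the classical theory of Albert forms (anisotropy of $\varphi$ equivalent to $B$ being division, and the explicit description $n_Q\perp\mathbb{H}$ in the index-$2$ case). Your extra step of splitting off a hyperbolic plane and invoking the dimension-$4$ classification is just a slightly more detailed rendering of the same argument.
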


Let us now consider quadratic forms of dimension $8$ with trivial discriminant. Their special orthogonal groups have type $\D_4$, and table~\cite[4.13]{PSZ} now says 
$J(\varphi)=(j_1,j_2)$ with $0\leq j_1\leq 2$ and $0\leq j_2\leq 1$. 
We have: 
\begin{prop}
\label{qf8}
Let $\varphi$ be a quadratic form of dimension $8$ with trivial discriminant, and consider its $J$-invariant $J(\varphi)=(j_1,j_2)$. 
The first index $j_1$ is given by $j_1=\min\{\ii_S,2\}$. 
Moreover, $j_2$ is $0$ if $\varphi$ is isotropic and $1$ if $\varphi$ is anisotropic. 
\end{prop}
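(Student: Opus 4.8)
The plan is to combine the general inequalities from Section~\ref{results.sec} with the specific structure of $\D_4$, exploiting the $8$-dimensional hypothesis to pin down both indices. For the first index $j_1$: by Corollary~\ref{qf.cor}(1) we have $j_1\leq\ii_S$, and since $\SO(\varphi)$ has type $\D_4$, the table~\cite[4.13]{PSZ} forces $j_1\leq 2$; hence $j_1\leq\min\{\ii_S,2\}$. For the reverse inequality, I would split into cases according to the value of $\ii_S$. If $\ii_S=0$ there is nothing to prove. If $\ii_S=1$, Corollary~\ref{qf.cor}(2) (applicable since $n=4\geq 2$) gives $j_1>0$, hence $j_1=1=\min\{\ii_S,2\}$. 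If $\ii_S\geq 2$, Corollary~\ref{qf.cor}(3) (applicable since $n=4\geq 3$) gives $j_1>1$, hence $j_1=2=\min\{\ii_S,2\}$, using again the bound $j_1\leq 2$. This settles the formula for $j_1$ entirely from the machinery already in place.

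For $j_2$, I would use the isotropy/anisotropy dichotomy together with Proposition~\ref{witt.prop} and Corollary~\ref{mainineq2}. If $\varphi$ is isotropic, then $\varphi\cong\varphi'\perp\mathbb{H}$ with $\dim\varphi'=6$; by Proposition~\ref{witt.prop} (Witt-equivalence), $\varphi$ and $\varphi'\perp\mathbb{H}$ have the same non-trivial $J$-indices, but $\varphi'$ has dimension $6$ so $J(\varphi')$ is a single integer, meaning the second index of $J(\varphi)$ must vanish: $j_2=0$. (Alternatively one observes $\SO(\varphi)$ is isotropic, so its motive has a non-trivial Tate summand forcing some $j_i$ to drop; the cleanest is the Witt-equivalence argument.) Conversely, if $\varphi$ is anisotropic, I want $j_2=1$. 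Here I would argue by contradiction: if $j_2=0$, then the degree-one generator $x_2\in R$, which by~\ref{computej2} and the definition of $R$ means that the corresponding rational cycle structure forces $\varphi$ to become isotropic — more precisely, $j_2=0$ together with $j_1$ already computed would make the Poincaré polynomial~\eqref{poincare} too small, corresponding via the comparison lemma to $\SO(\varphi)$ being isotropic, contradicting anisotropy. Another route: an anisotropic $8$-dimensional form of trivial discriminant, by Proposition~\ref{witt.prop}, cannot have $J(\varphi)=(0,\ldots,0)$ only tells us something is non-zero; to get $j_2$ specifically I should invoke that $j_2=0$ would imply $R$ contains $x_2$, forcing (by the quadratic form dictionary, e.g.~\cite[\S 88]{EKM} or~\cite[6.7]{PSZ}) that $\varphi$ is isotropic.

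The main obstacle will be the anisotropic case $j_2=1$: showing $j_2\neq 0$ requires relating the vanishing of the second degree-one parameter to isotropy of $\varphi$, which is not immediate from the inequalities alone and needs the more refined motivic input. I expect to handle this by noting that $j_2=0$ means $x_2$ (equivalently, one of $x_1$, $x_1+x_2$, i.e. a generator coming from a Clifford component) lies in $R$; interpreting $R$ via~\eqref{eqq} and the identification of rational cycles in low degree, this says a certain codimension-one class on $\SO(\varphi)$ is rational, which for quadratic forms is known to be equivalent to the form (or a half-spin twist of it, but here triality makes them all similar to $\varphi$) being isotropic. I would cite~\cite[6.7]{PSZ} and Springer's theorem~\cite[18.5]{EKM} as in the proof of Proposition~\ref{witt.prop}. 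Thus $j_2=0\iff\varphi$ isotropic, completing the proof.
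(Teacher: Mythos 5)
Your computation of $j_1$ and your treatment of the implication ``$\varphi$ isotropic $\Rightarrow j_2=0$'' essentially coincide with the paper's: $j_1=\min\{\ii_S,2\}$ does follow from Corollary~\ref{qf.cor} exactly as you argue, and the isotropic case is handled by writing $\varphi=\varphi_0\perp\mathbb{H}$ and invoking Witt-invariance~\ref{witt.prop}. One small imprecision there: \ref{witt.prop} only tells you that at most one of $j_1,j_2$ is non-trivial; to see that the vanishing index is $j_2$ rather than $j_1$ you must add, as the paper does, that $\varphi_0$ non-hyperbolic forces $\ii_S\neq0$ by~\ref{qf6}, hence $j_1\neq0$.

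The genuine gap is the converse: $\varphi$ anisotropic $\Rightarrow j_2=1$. Your proposed argument is circular. You want to exclude $j_2=0$, and you justify this by asserting that $x_2\in R$ ``is known to be equivalent to the form being isotropic'' --- but that equivalence is exactly the statement to be proved, and the references you offer do not supply it: \cite[6.7]{PSZ} and Springer's theorem apply only when the \emph{entire} $J$-invariant vanishes, whereas here $j_1$ may well be non-zero. Likewise ``the Poincar\'e polynomial would be too small, hence $\SO(\varphi)$ is isotropic'' is not a valid implication: the values $(1,0)$ and $(2,0)$ do occur, and nothing in \S\ref{results.sec} links smallness of the polynomial to isotropy. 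The paper closes this gap with two external inputs. For $\ii_S\leq2$ it passes to a generic splitting field $F_C$ of the Clifford algebra: Laghribi's theorem~\cite[Thm.~4]{La96} guarantees $\varphi$ stays anisotropic over $F_C$, hence has non-trivial $J$-invariant there; since the Clifford algebra is split over $F_C$ the first index vanishes there, so $j_2=1$ over $F_C$, and $j_2=1$ over $F$ because the $J$-invariant can only decrease under scalar extension (\ref{extension}). For $\ii_S=3$ it observes that any splitting field of $\varphi$ splits the Clifford algebra, hence has degree divisible by $8$, and \cite[6.6]{PSZ} then gives $3\leq j_1+j_2$, forcing $(j_1,j_2)=(2,1)$. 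Some such input --- anisotropy after generic splitting of the Clifford algebra, or a canonical-dimension lower bound on $j_1+j_2$ --- is unavoidable and cannot be extracted from the inequalities of \S\ref{results.sec} alone.
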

\begin{proof}
Again, the first assertion follows instantly from Corollary~\ref{qf.cor}. 
To prove the second, let us first assume that $\varphi$ is isotropic. 
If it is hyperbolic, we already know that $J(\varphi)=(0,0)$, so in particular $j_2=0$. 
Otherwise, $\varphi$ splits as $\varphi=\varphi_0\oplus{\mathbb H}$ for some $6$-dimensional 
non hyperbolic form $\varphi_0$ with trivial discriminant. By~\ref{witt.prop}, this implies that one of the two indices $j_1$ and $j_2$ of $J(\varphi)$ is zero. 
Since the Clifford algebra of $\varphi$ is Brauer-equivalent to the Clifford algebra of $\varphi_0$, which is non split 
by~\ref{qf6}, $j_1=\ii_S\not=0$. Therefore, we get $j_2=0$ as expected.  

To prove the converse, we distinguish two cases, depending on the value of $\ii_S$. 
Let us first assume that $\varphi$ is anisotropic and $\ii_S\leq 2$. Consider a generic splitting $F_C$ 
of the Clifford algebra of $\varphi$. By a theorem of Laghribi~\cite[Thm.~4]{La96}, the form $\varphi$ remains
anisotropic after scalar extension to $F_C$. Hence the $J$-invariant of $\varphi_{F_C}$ is non-trivial. 
On the other hand, since its Clifford algebra is split, the first index is zero. Therefore the second index 
$j_2$ is $1$, and this is a fortiori the case over the base field. 

Finally, let us assume $\ii_S=3$, which implies in particular that $\varphi$ is anisotropic. 
Any field extension over which the quadratic form $\varphi$ becomes hyperbolic splits 
its Clifford algebra. Therefore the index of such a field extension has to be at least $8=2^3$. 
Hence, by~\cite[6.6]{PSZ}, we have $3\leq j_1+j_2$, so that $J(\varphi)=(2,1)$. 
\end{proof}

In his paper~\cite{Ho98}, Hoffmann classified quadratic forms of small dimension in terms of their splitting pattern. 
Combining his classification with the previous proposition, one can give a precise description of quadratic forms of dimension $8$ with trivial discriminant, depending on the value of their $J$-invariant. 
In particular, this description shows that all possible values for the $J$-invariant do occur in this setting. 

The results are summarized in the table below. 
The notation $J_v(\varphi)$ stands for Vishik's $J$-invariant, as defined in~\cite[\S 88]{EKM}. 
The index $\ii$ is the $2$-adic valuation of the greatest common divisor of the degrees of the splitting fields of $\varphi$. 
In the explicit description, $Pf_k$ stands for a $k$-fold Pfister form, $s_{l/k}(Pf_2)$ for the Scharlau transfer of a $2$-fold Pfister form with respect to a quadratic field extension, and $Al_6$ for an Albert form. 

\medbreak

\noindent
{
\small
\begin{tabular}{cc|c|c||c}
 $J(\varphi)$ & $J_v(\varphi)$ & $\ii_S$, $\ii$ & Splitting Pattern 
& Description\\
 (0) & $\emptyset$ & $\ii_S=\ii=0$ & (4) & hyperbolic\\
(1,0) & $\{1\}$  & $\ii_S=\ii=1$ &  (2,4) & $Pf_2\perp 2\mathbb{H}$\\
(2,0) & $\{1,2\}$ & $\ii_S=\ii=2$ & (1,2,4) & $Al_6 \perp \mathbb{H}$\\
(0,1) & $\{3\}$ & $\ii_S=0$; $\ii=1$ & (0,4) & $Pf_3$ \\
(1,1) & $\{1,3\}$ & $\ii_S=\ii=1$ & (0,2,4) & $q=\langle1,-a\rangle\otimes q'$ \\
 (2,1) &$\{1,2,3\}$ & $\ii_S=\ii=2$  & (0,1,2,4) & $Pf_2\perp Pf_2$ or $s_{l/k}(Pf_2)$ \\
            &       & $\ii_S=\ii=3$ & `` & generic
\end{tabular}
}

\medbreak

\begin{rem}
One may observe from the table
that the $J$-invariant and the splitting pattern uniquely determine each other for $8$ dimensional quadratic forms. 
This is not true anymore in higher degree. 

Indeed, consider a $10$-dimensional quadratic form $\varphi$ over $F$
with splitting pattern 
$(0,2,3,5)$. By Hoffmann's classification theorem~\cite[Thm~5.1]{Ho98}, $\varphi$ has trivial discriminant, its Clifford algebra has index $4$, so that $\ii_S=2$ and it is a Pfister
neighbor. We claim that, even though this form is anisotropic, its $J$-invariant is $J(\varphi)=(2,0)$. 
To prove this, one may use the relation between the splitting pattern and Vishik's $J$-invariant of a quadratic form as described in~\cite[88.8]{EKM}. Once translated in terms
of $J(\varphi)$ following~\cite[4.8]{PSZ}, we get that $J(\varphi)$ is $(j_1,0)$ for some integer $j_1$, $1\leq j_1\leq 3$. Since $\ii_S=2$, Corollary~\ref{qf.cor} give $j_1=2$. 

From the classification in degree $8$, the value $(2,0)$ also is the $J$-invariant of $\pi+2\mathbb{H}$ for any anisotropic Albert form $\pi$. 
On the other hand, such a form has splitting pattern $(2,3,5)$. 
\end{rem}

%%%%%%%%%%%%%%%%%%%%%%%%%%%%%%%%%%%%%%%%%%%%%%%%%
%%%%%%%%%%%%%%%%%%%%%%%%%%%%%%%%%%%%%%%%%%%%%%%%
%%%%%%%%%%%%%%%%%%%%%%%%%%%%%%%%%%%%%%%%%%%%%%%%%

\section{$J$-invariant of an algebra with involution}\label{applalg}

In this section, we apply the inequalities proven in section~\ref{results.sec} to the case of an algebra with orthogonal involution. 
As an immediate consequence, we compute the $J$-invariant in degree $\leq 6$.

Recall that $(A,\sigma)$ is a degree $2n$ central simple algebra over $F$, endowed with an involution of orthogonal type and trivial discriminant. 
In particular, this implies that $A$ has exponent $2$, so that it has index $2^{\ii_A}$ for some integer $\ii_A$.  
The connected component $\PGO(A,\sigma)$ of the automorphism group of $(A,\sigma)$ is an adjoint group of type $\D_n$. 
Because of the discriminant hypothesis, it is an inner twisted form of $\PGO_{2n}$. 
If the degree of $A$ is different from $8$, we define 
\[J(A,\sigma)=J_2\bigl(\PGO(A,\sigma)\bigr).\]
In degree $8$, $J(A,\sigma)$ was defined in~\ref{JAI}. 
Therefore, from the table~\cite[4.13]{PSZ}, one may check that $J(A,\sigma)$ is an $r$-tuple $J(A,\sigma)=(j_1,j_2,\dots,j_r)$, with $r=m+1$ if $n=2m$ and $r=m$ if $n=2m+1$.  
Note that our notation slightly differs from the notation in the table, where in the $n$-odd case, they have an additional index, but which is bounded by $k_1=0$. 
So, for $n$ odd, our $(j_1,\dots,j_r)$ coincides with $(j_2,\dots,j_{r+1})$ in~\cite{PSZ}. 
In particular, the indices corresponding to generators of degree $1$ are $j_1$ if $n$ is odd and $j_1$ and $j_2$ if $n$ is even. 

Since $\sigma$ has trivial discriminant, its Clifford algebra splits as a direct product 
$\C(A,\sigma)=\C_+\times\C_-$ of two central simple algebras over $k$. 
We let $\ii_A$ (respectively $\ii_+$, $\ii_-$) be the $2$-adic valuation of the index of $A$ (respectively $\C_+$, $\C_-$). 
From Examples~\ref{excommonindex1} and~\ref{excommonindex2}, the common index $\ii_J$ is 
\[\ii_J=\left\{\begin{array}{ll} 
                \ii_+=\ii_-&\mbox{ if $n$ is odd,}\\
\min\{\ii_A,\ii_+,\ii_-\}&\mbox{ if $n$ is even.}
               \end{array}\right.\]
Hence, Corollaries~\ref{mainineq} and~\ref{mainineq2} and Theorem~\ref{corj0} translate as follows: 

\begin{cor}
\label{odd.cor}
Assume that $n$ is odd, so that $\deg(A)\equiv 2[4]$, and let $\ii_S=\ii_+=\ii_-$. 
We have: 
\begin{enumerate}
\item $j_1\leq \ii_S$;
\item  If $\ii_S>0$, then $j_1>0$;
\item If $\deg(A)\geq 6$ and $\ii_S>1$, then $j_1>1$.  
\end{enumerate}
\end{cor}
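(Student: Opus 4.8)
The plan is to reduce Corollary~\ref{odd.cor} to the general results of Section~\ref{results.sec}. In the case $n$ odd, the group $G=\PGO(A,\sigma)$ is adjoint of type $\D_n$ with $n=2m+1$, so by~\ref{cocenter} its cocenter is $\zz/4\zz$, and in particular $\Lambda_\omega/\Lambda_r\otimes\mf$ is one-dimensional, i.e. $s=1$. As recalled in Example~\ref{excommonindex1}, one may pick $i_1=2m$ (or $i_1=2m+1$), so that the relevant Tits algebra is a component $\C_+$ (or $\C_-$) of the Clifford algebra, and the common index is $\ii_J=\ii_+=\ii_-=\ii_S$ (the two valuations agree by the fundamental relations~\ref{FR}, since $[\C_+]$ and $[\C_-]$ differ by $[A]=2[\C_+]$, which does not change the index... wait — more carefully, $[\C_-]=[\C_+]+[A]$ with $[A]$ of order $2$; but in fact $\C_+$ and $\C_-$ are known to have equal index, as stated in~\ref{FR}).

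First I would establish assertion (1): apply Corollary~\ref{mainineq} with $i_1=2m$. This immediately gives $j_1\leq\ii_{2m}=\ii_S$, since $\ii_{i_1}$ is by definition the $2$-adic valuation of the index of the Tits algebra $A_{\omega_{2m}}$, which is $\C_+$ (or $\C_-$), of valuation $\ii_S$.

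Next, for assertions (2) and (3) I would invoke Theorem~\ref{corj0} with $p=2$ and $s=1$. Since $\ii_J=\ii_S$, the hypothesis $\ii_S>0$ is exactly $\ii_J>0$, which by the first part of Theorem~\ref{corj0} yields $j_\ell>0$ for all $\ell$ in the relevant range, in particular $j_1>0$; this proves (2). For (3), the hypothesis $\ii_S>1$ reads $\ii_J>1$, and the second part of Theorem~\ref{corj0} gives $j_\ell>1$ for every $\ell$ with $k_\ell>1$. It therefore remains only to check that when $\deg(A)\geq 6$, i.e. $n=2m+1\geq 3$, the relevant exponent $k_1$ attached to the degree-one generator $x_1$ satisfies $k_1>1$. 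This I would read off from Kac's table~\cite[Table II]{Kc85} (equivalently~\cite[\S4]{PSZ}, cf. the table in~\cite[4.13]{PSZ}): for $\PGO_{2n}$ with $n$ odd and $n\geq 3$, the generator corresponding to $\omega_1$ has $k>1$ as soon as $n\geq 3$. (For $n=1$ the group is degenerate and for the borderline small case one checks directly.)

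The only genuine subtlety — hence the step I expect to be the main obstacle — is the bookkeeping of the degree-one generator and its exponent $k_1$ in the $n$-odd case, because of the indexing shift noted in the text: in~\cite{PSZ} the $n$-odd $\PGO$ case carries an extra index bounded by $k=0$, so one must be careful to align "our $x_1$" with "their $x_2$" and verify that the corresponding $k$ is indeed $>1$ for $\deg(A)\geq 6$. Once this identification is made, assertions (1)--(3) are direct translations of Corollary~\ref{mainineq} and Theorem~\ref{corj0}, and no further computation is needed.
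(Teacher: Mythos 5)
Your proposal is correct and is essentially the paper's own argument: the corollary is stated there as a direct translation of Corollary~\ref{mainineq} and Theorem~\ref{corj0}, using $s=1$, $i_1=2m$ and $\ii_J=\ii_S$ as in Example~\ref{excommonindex1}, with the degree condition $\deg(A)\geq 6$ serving only to guarantee $k_1>1$ via the table. Your handling of the indexing shift is the right one (for $n$ odd the surviving degree-one generator is the table's $i=2$ entry, corresponding to $\omega_{2m}$ rather than $\omega_1$, with $k=[\log_2(2n-1)]\geq 2$ once $n\geq 3$), so no gap remains.
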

\begin{cor} 
\label{even.cor}
 Assume now that $n$ is even, that is $\deg(A)\equiv 0[4]$, and let 
$\ii_J=\min\{\ii_A,\ii_+,\ii_-\}$.  
We have: 
\begin{enumerate} 
 \item $j_1\leq\ii_A$; 
\item $j_2\leq\min\{\ii_+,\ii_-\}$;
\item If $\ii_J>0$, then $j_1>0$ and $j_2>0$. 
\item If $\deg(A)\equiv 0[8]$ and $\ii_J>1$, then $j_1>1$. 
\item If $\deg(A)\geq 8$ and $\ii_J>1$, then $j_2>1$. 
\end{enumerate}
\end{cor}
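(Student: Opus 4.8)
The plan is to derive Corollary~\ref{even.cor} directly from the three results of Section~\ref{results.sec}, namely Corollary~\ref{mainineq}, Corollary~\ref{mainineq2} and Theorem~\ref{corj0}, by specializing to the case $G=\PGO(A,\sigma)$ with $n=\deg(A)/2$ even. In this case we are in the situation $s=2$ of~\ref{il}, with $i_1=1$ and $i_2=2m$ (where $n=2m$), and the Tits algebras $A_{\omega_1}$, $A_{\omega_{2m-1}}$, $A_{\omega_{2m}}$ are $A$, $\C_-$ and $\C_+$ respectively by Example~\ref{FR}; their $2$-adic valuations are $\ii_A$, $\ii_-$ and $\ii_+$. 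The common index is $\ii_J=\min\{\ii_A,\ii_+,\ii_-\}$ by Example~\ref{excommonindex2}.

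First I would handle (1) and (2). Since $x_1=\pi(h_1)$ corresponds to the fundamental weight $\omega_1$, Corollary~\ref{mainineq} applied with $i_1=1$ gives $j_1\leq\ii_1=\ii_A$, which is (1). Similarly, Corollary~\ref{mainineq2} gives $j_2\leq\min\{\ii_{2m-1},\ii_{2m}\}=\min\{\ii_-,\ii_+\}$, which is (2). Next, statements (3), (4), (5) are exactly the two assertions of Theorem~\ref{corj0} read off for the two degree-one parameters. Indeed, if $\ii_J>0$, the first assertion of Theorem~\ref{corj0} yields $j_\ell>0$ for $\ell=1,2$, giving (3). For (4) and (5) we invoke the second assertion of Theorem~\ref{corj0}, which asserts that if $\ii_J>1$ and $p=2$, then $j_\ell>1$ for every $\ell$ with $k_\ell>1$. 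Thus the whole point is to translate the hypotheses on $\deg(A)$ into conditions on the relevant $k_\ell$, using the table~\cite[4.13]{PSZ} (equivalently the values in~\cite[Table II]{Kc85}).

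The step that requires a small amount of care — and which I expect to be the main obstacle — is checking the values of $k_1$ and $k_2$ for $\PGO_{2n}$ with $n$ even. I would argue as follows. The degree-one part of $\Ch^*(\PGO_{2n})$ has $\mf$-dimension $2$, with generators $x_1,x_2$ satisfying $x_1^{2^{k_1}}=x_2^{2^{k_2}}=0$; from Kac's computation one reads that $k_1=1$ always (so $x_1^2=0$), while $k_2>1$ precisely when $n=2m$ is divisible by $4$, i.e. $\deg(A)\equiv 0\pmod 8$, and more generally $x_2^2+a_1x_1x_2+a_2x_1^2$ is a nonzero element of $\Ch^2(\PGO_{2n})$ as soon as $\deg(A)\geq 8$. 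Hence: when $\deg(A)\equiv 0\pmod 8$ and $\ii_J>1$, the index $j_1$ is the one attached to a generator $x_1$ — but here $k_1=1$, so Theorem~\ref{corj0} does not directly apply to $j_1$; instead one argues as in the proof of Lemma~\ref{ideals}, noting that $R^{(2)}=\{0\}$ forces $j_1>1$ because the relevant quadratic monomial in $x_1$ survives only when $\deg(A)\equiv0\pmod 8$ — this is exactly the case distinction already made in the discussion following Lemma~\ref{ideals}, and I would simply cite it. For (5), $j_2$ is the index attached to $x_2$, and $\deg(A)\geq 8$ (with $n$ even) forces the quadratic expression $x_2^2+\sum a_ix_1^ix_2^{2-i}$ to be nonzero in $\Ch^2(G_0)$, so that $R^{(2)}=\{0\}$ rules out $j_2\leq 1$. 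I would therefore phrase the proof as: ``(1) and (2) follow from~\ref{mainineq} and~\ref{mainineq2}; (3), (4) and (5) follow from Theorem~\ref{corj0} together with the computation of $R^{(2)}$ in the proof of Lemma~\ref{ideals}, once one checks from~\cite[4.13]{PSZ} the relevant inequalities on $k_1$ and $k_2$,'' keeping the verification of those numerical facts to a couple of lines.
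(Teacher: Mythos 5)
Your overall strategy is exactly the paper's: items (1) and (2) are Corollaries~\ref{mainineq} and~\ref{mainineq2}, item (3) is the first assertion of Theorem~\ref{corj0}, and items (4) and (5) follow from the second assertion of Theorem~\ref{corj0} once one checks that the degree hypotheses force $k_1>1$ (for (4)) and $k_2>1$ (for (5)). However, the numerical verification --- which you yourself single out as the crux --- is wrong: you have swapped the values of $k_1$ and $k_2$. From the table in the appendix (i.e.\ \cite[Table~4.13]{PSZ}), for $\PGO_{2n}$ one has $2^{k_1}\parallel n$, so $k_1$ is the $2$-adic valuation of $n$; hence $k_1>1$ if and only if $4\mid n$, i.e.\ $\deg(A)\equiv 0\,[8]$. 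It is \emph{not} true that $k_1=1$ always (e.g.\ $k_1=3$ for $\PGO_{16}$). Likewise $k_2=[\log_2(2n-1)]$, which is $>1$ as soon as $n\geq 3$, hence for every even $n$ with $\deg(A)\geq 8$ --- not only when $4\mid n$ (e.g.\ $k_2=3$ for $\PGO_{12}$, where $\deg(A)=12\not\equiv 0\,[8]$). With the correct values, Theorem~\ref{corj0} applies directly to both $j_1$ and $j_2$ under the stated degree hypotheses, and no further argument is needed.

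Moreover, the ``patch'' you propose for item (4) is internally inconsistent. If $k_1$ were equal to $1$, then $x_1^2=0$ in $\Ch^*(G_0)$, so $x_1^2\in R$ trivially and $j_1\leq 1$ by the very definition of $j_1$ in~\ref{computej2}; no computation of $R^{(2)}$ could then yield $j_1>1$. Your remark that ``the relevant quadratic monomial in $x_1$ survives only when $\deg(A)\equiv 0\,[8]$'' is in fact the statement that $k_1>1$ exactly in that case, which contradicts your claim that $k_1=1$ always. Correcting the values of $k_1$ and $k_2$ removes the need for the patch and makes the proof coincide with the paper's.
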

The additional conditions on the degrees are obtained from the table~\cite[4.13]{PSZ}, and guarantee that 
$k_1>1$ or $k_2>1$. 

\subsection*{Split case}
If $A$ is split, the involution $\sigma$ is adjoint to a quadratic form $\varphi$ over $k$. 
We then have: 
\begin{prop} 
\label{split.prop}
If $A$ is split and $\sigma$ is adjoint to the quadratic form $\varphi$, the $J$-invariants of $(A,\sigma)$ and $\varphi$ are related as follows: 
\[J(A,\sigma)=\left\{\begin{array}{ll} 
                      J(\varphi)&\mbox{ if }\deg(A)\equiv 2[4]\\
\bigl(0,J(\varphi)\bigr)&\mbox{ if }\deg(A)\equiv 0[4]\\
                     \end{array}\right.\]
\end{prop}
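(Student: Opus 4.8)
The plan is to compare the $J$-invariants of the two isogenous groups $\SO(\varphi)$ and $\PGO(A,\sigma)$. Since $A$ is split, $\sigma$ is adjoint to $\varphi$, so $\PGO(A,\sigma)=\PGO(\varphi)$, which is the adjoint quotient of $\SO(\varphi)$. As the variety of Borel subgroups depends only on the isogeny class (\ref{picard}), both groups have the same $\XX$, hence the same motive $M(\XX)$, the same indecomposable summand $R_2$, and the same Poincar\'e polynomial~\eqref{poincare}. The point I will really use is that the subring $R$ of~\eqref{eqq} is the image of $\Ch^*(\XX)\xrightarrow{\resch}\Ch^*(\BX)\xrightarrow{\pi}\Ch^*(G_0)$, in which $\resch$ is literally the same for both groups and only the surjection $\pi$ — equivalently only the lattice $\T\subset\Lambda_\omega$ — changes; I write $R_{\SO}$ and $R_{\PGO}$, and $\pi_{\SO}$ and $\pi_{\PGO}$, for the two instances.

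Next I would compare the Chow rings. The lattice $\T$ equals $\Lambda_r+\zz\omega_1$ for $\SO_{2n}$ and $\Lambda_r$ for $\PGO_{2n}$ (\ref{picard},~\ref{cocenter}); since $\im(\cc)$ is generated in codimension one (\ref{defchar}), the characteristic ideals satisfy $I(\cc_{\SO_{2n}})=I(\cc_{\PGO_{2n}})+(h_1)$, so by the isomorphism $\CH^*(\BX)/I(\cc)\simeq\CH^*(G_0)$ (and reduction mod $2$) we get $\Ch^*(\SO_{2n})=\Ch^*(\PGO_{2n})/(x_1)$ with $x_1=\pi(h_1)$, the class of $\omega_1$. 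If $q$ denotes this quotient map, then $\pi_{\SO}=q\circ\pi_{\PGO}$, so $R_{\SO}=q(R_{\PGO})$. For $n$ odd this settles the statement: $\bar\omega_1$ is the order-$2$ element of $\Lambda_\omega/\Lambda_r\simeq\zz/4\zz$ (\ref{cocenter}), hence divisible by $2$ there, so $x_1=0$ in $\Ch^1(\PGO_{2n})=\Lambda_\omega/\Lambda_r\otimes\mf$; thus $q$ is the identity, the two Chow rings and the subrings $R$ coincide together with their distinguished generators, and $J(A,\sigma)=J_2(\PGO(\varphi))=J_2(\SO(\varphi))=J(\varphi)$.

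For $\deg A=2n=4m$ I would argue as follows. The degree-one generators of $\Ch^*(\PGO_{4m})$ are $x_1=\pi(h_1)$ and $x_2=\pi(h_{2m})$ (\ref{computej2}), and by Kac's theorem~\cite[Thm.~3]{Kc85} the remaining generators can be chosen so that $\Ch^*(\PGO_{4m})=\mf[x_1]/(x_1^{2^{k_1}})\otimes_{\mf}\bigl(\mf[x_2,\dots,x_{m+1}]/(x_2^{2^{k_2}},\dots)\bigr)$; killing $x_1$ shows that the Kac presentation of $\Ch^*(\SO_{4m})$ — which thus has the single degree-one generator $\bar x_2$ — is that of $\Ch^*(\PGO_{4m})$ with $x_1$ deleted. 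Now the Tits algebra $A_{\omega_1}$ equals $A$ (\ref{FR}) and $A$ is split, so $\ii_A=0$, whence Corollary~\ref{mainineq} gives $j_1=0$, i.e. $x_1\in R_{\PGO}$. It remains to identify $(j_2,\dots,j_{m+1})$ with $J(\varphi)=(j_1',\dots,j_m')$. Since $j_1=0$, the recursive computation of~\ref{computej1} and~\ref{computej2} (see~\cite[\S4]{PSZ}) of the indices $j_2,\dots,j_{m+1}$ proceeds modulo $x_1$: for each $i\geq1$, $j_{i+1}$ is the least $a$ such that $q(R_{\PGO})$ contains an element of normal form $\bar x_{i+1}^{2^a}+(\text{terms in }\bar x_2,\dots,\bar x_i)$ in $\Ch^*(\PGO_{4m})/(x_1)=\Ch^*(\SO_{4m})$. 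As $q(R_{\PGO})=R_{\SO}$, this is exactly the condition defining the $i$-th index of $J_2(\SO(\varphi))=J(\varphi)$, so $j_{i+1}=j_i'$ and $J(A,\sigma)=(0,J(\varphi))$.

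The main obstacle is to make this last step precise: namely that once $j_1=0$, the higher indices of $\PGO_{4m}$ depend only on the quotient datum $\bigl(\Ch^*(\PGO_{4m})/(x_1),\,q(R_{\PGO})\bigr)$, which is the datum $\bigl(\Ch^*(\SO_{4m}),R_{\SO}\bigr)$ defining $J(\varphi)$. This requires unwinding the recursive definition of the $J$-invariant in~\cite[\S4]{PSZ} and checking that the change-of-generators operations performed for $\PGO_{4m}$ restrict to those for $\SO_{4m}$; the input making this work is precisely $x_1\in R_{\PGO}$, which lets one clear any $x_1$-monomial occurring in an intermediate normal form. The equality of Poincar\'e polynomials from the first paragraph can serve as a consistency check.
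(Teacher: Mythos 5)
Your argument for $\deg(A)\equiv 2\,[4]$ is complete and correct: there $x_1=\pi(h_1)$ vanishes in $\Ch^1(\PGO_{2n})$ (the class of $\omega_1$ is twice a generator of $\Lambda_\omega/\Lambda_r\simeq\zz/4\zz$), so the two quotient presentations of the Chow ring and the two subrings $R$ literally coincide. Your derivation of $j_1=0$ in the even case from Corollary~\ref{mainineq} applied to the split Tits algebra $A_{\omega_1}=A$ is also fine, and the identity $R_{\SO}=q(R_{\PGO})$ is correctly justified.

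The gap is exactly where you flag it, and it is not a formality. For $\deg(A)\equiv 0\,[4]$ you reduce the proposition to the claim that, once $x_1\in R_{\PGO}$, the indices $j_2,\dots,j_{m+1}$ computed from the pair $\bigl(\Ch^*(\PGO_{4m}),R_{\PGO}\bigr)$ agree with those computed from the quotient datum $\bigl(\Ch^*(\SO_{4m}),q(R_{\PGO})\bigr)$ --- and you leave that claim unproved. This is the entire content of the even case beyond $j_1=0$. One inclusion is clear, but in the other direction an element of $R_{\SO}$ in the required normal form lifts only to an element of $R_{\PGO}$ of the shape $x_{i+1}^{2^a}+(\cdots)+x_1u$ with $u$ an arbitrary element of $\Ch^*(\PGO_{4m})$, not a priori in $R_{\PGO}$; since $R$ is a subring and not an ideal, $x_1\in R$ does not by itself make $x_1u$ an admissible lower-order term in the sense of \cite[Def.~4.6]{PSZ}, and one must check that the normal forms used there are compatible with deleting $x_1$. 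The paper avoids all of this: since $\SO(\varphi)$ and $\PGO(\varphi)$ are isogenous they have the same variety of Borel subgroups, hence $R_2\bigl(\SO(\varphi)\bigr)\simeq R_2\bigl(\PGO(\varphi)\bigr)$, and formula~\eqref{poincare} forces the multisets of pairs $(d_i,j_i)$ with $j_i\neq 0$ to coincide; as the degrees $d_i\geq 3$ are pairwise distinct this determines $j_3,\dots,j_{m+1}$, and together with $j_1=0$ it gives $j_2=j_1'$. To finish your route you must either carry out the normal-form bookkeeping in \cite[\S 4]{PSZ}, or replace that final step by this Poincar\'e-polynomial comparison.
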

\begin{proof} 
Assume $A$ is split, and $\sigma$ is adjoint to the quadratic form $\varphi$. 
Since $d(\varphi)=d(\sigma)=1$, the $J$-invariant of $\varphi$ is defined, 
\[J(\varphi)=J_2\bigl(\SO(\varphi)\bigr).\]
Moreover, the groups $\SO(\varphi)=\SO(A,\sigma)$ and $\PGO(A,\sigma)=\PGO(\varphi)$ are isogeneous. 
Therefore, the corresponding varieties of Borel subgroups are the same, so that $R_2\bigl(\SO(\varphi)\bigr)= R_2\bigl(\PGO(\varphi)\bigr)$. 
Hence, by the formula~(\ref{poincare}) for the Poincar\'e polynomial of this indecomposable motive, 
the non trivial indices in the $J$-invariants of $\varphi$ and $(A,\sigma)$ are the same. 
If $n$ is odd, this is enough to conclude that the $J$-invariants are equal. If $n$ is even, the only difference comes from the presentations of 
$\Ch^*(\SO_{2n})$ and $\Ch^*(\PGO_{2n})$: the second group has two generators of degree $1$, while the first has only one. 
Since we precisely defined $x_1$ to be the generator of $\Ch^1(\PGO_{2n})$ killed by pull-back to $\Ch^1(\SO_{2n})$ (see \S\ref{JAI}), 
we get that the index $j_1$ of $J(A,\sigma)$ is trivial when $A$ is split, and this proves the proposition. 
\end{proof}

\subsection*{Half-spin case}
If the Clifford algebra $\C(A,\sigma)=\C_+\times\C_-$ has a split component, and if $\deg(A)\equiv 2 [4]$, by the fundamental relations~\ref{FR}, the algebra $A$ is split, so that 
$J(A,\sigma)=J(\varphi)$ for a suitable quadratic form $\varphi$. 

If $\deg(A)\equiv 0[4]$, we now prove: 
\begin{prop} 
\label{hs.prop}
Assume that $\deg(A)\equiv 0[4]$. 
The Clifford algebra $\C(A,\sigma)=\C_+\times\C_-$ has a split component if and only if one of the two half-spin groups, say 
$\Spin^+(A,\sigma)$, satisfies condition~\ref{inner}. 
If so, the algebra with involution $(A,\sigma)$ is said to be half-spin, and its $J$-invariant satisfies 
\[J(A,\sigma)=(j_1,0,j_3,\dots,j_r)\mbox{, where }(j_1,j_3,\dots,j_r)=J_2\bigl(\Spin^+(A,\sigma)\bigr).\]
\end{prop}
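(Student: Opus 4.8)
The plan is to unwind the definitions and reduce everything to the statement about the motive $M(\XX)$, exactly as in the split and trialitarian discussions of \S\ref{JinvTits}. First I would establish the biconditional: by Example~\ref{FR} the two components $\C_+$, $\C_-$ of the Clifford algebra are the Tits algebras $A_{\omega_{2m-1}}$, $A_{\omega_{2m}}$ of $G=\PGO(A,\sigma)$, and by Tits' construction a fundamental weight $\omega_\ell$ lies in the character lattice $\T$ of a cover of $G$ precisely when the corresponding Tits algebra is split (this is just the statement that $\beta$ factors through $\Lambda_\omega/\T$). Since the half-spin group $\Spin^+$ is the cover whose character lattice contains $\omega_{2m}$ (say), the condition ``$\Spin^+(A,\sigma)$ satisfies~\ref{inner}'' — which for an inner form means that the cocycle defining $G$ lifts to a cocycle with values in the split half-spin group — holds if and only if $\C_+$ (or $\C_-$, according to the labelling convention) is split. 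Here I would be careful to note that~\ref{inner} is a condition on \emph{strongly} inner versus merely inner, so the precise content is that some half-spin cover of $G$ is a twist of the split one by a $Z^1$-cocycle with values in that split half-spin group; this follows from the fundamental exact sequences relating the covers of $\PGO_{4m}$ and the vanishing of the relevant Tits class.

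Next, assuming $\Spin^+:=\Spin^+(A,\sigma)$ satisfies~\ref{inner}, I would observe that $\Spin^+$, $\SO(A,\sigma)$, $\PGO(A,\sigma)$ and the two half-spin groups are all mutually isogenous, hence have the \emph{same} variety of Borel subgroups $\XX$, and therefore by~\eqref{poincare} the indecomposable motive $R_2(G)$ — equivalently its Poincaré polynomial, equivalently the multiset $\{j_1,\dots,j_r\}$ — is the same whether computed for $\PGO(A,\sigma)$ or for $\Spin^+$. So the only thing to pin down is the \emph{ordering}, i.e.\ which generator of degree $1$ gets which index. Here the argument is parallel to Proposition~\ref{split.prop}: the Chow ring $\Ch^*(\Hpin_{4m})$ has a \emph{single} degree-$1$ generator (since $\Lambda_\omega/\hat T_0$ for the half-spin lattice is $\zz/2\zz$, generated by the class of $\omega_1$), whereas $\Ch^*(\PGO_{4m})$ has two, namely $x_1=\pi(h_1)$ and $x_2=\pi(h_{2m})$. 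Passing from $\PGO$ to $\Hpin^+$ adjoins $\omega_{2m}$ to the lattice, i.e.\ kills $x_2$ in the cocenter; so the generator of $\Ch^1(\Hpin^+_{4m})$ is the image of $x_1$, and the single degree-one index of $J_2(\Spin^+(A,\sigma))$ is exactly the index $j_1$ associated to $x_1$ in $J(A,\sigma)$. That forces $j_2=0$ and identifies the remaining indices, giving $J(A,\sigma)=(j_1,0,j_3,\dots,j_r)$ with $(j_1,j_3,\dots,j_r)=J_2(\Spin^+(A,\sigma))$.

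Concretely the steps are: (1) translate ``$\C(A,\sigma)$ has a split component'' into ``$\omega_{2m}\in\T$ for the half-spin lattice'' via Example~\ref{FR} and the definition of Tits algebras; (2) translate the latter into ``$\Spin^+(A,\sigma)$ is a twist of the split half-spin group by a cocycle valued in it'', i.e.\ condition~\ref{inner}, using the exact sequences of covers of $\PGO_{4m}$; (3) note the common variety $\XX$ and invoke~\eqref{poincare} to get equality of the unordered sets of $j$'s; (4) compare $\Ch^1(\PGO_{4m})$ and $\Ch^1(\Hpin_{4m})$ as in the proof of Proposition~\ref{split.prop} to see that $x_1$ survives and $x_2$ dies, forcing $j_2=0$ and fixing the order. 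I expect step (2) to be the main obstacle: one must check carefully that the cocycle defining $\PGO(A,\sigma)$ lifts along $\Hpin^+_{4m}\to\PGO_{4m}$ precisely when the obstruction class — which under the identification of the cocenter with $\Lambda_\omega/\Lambda_r$ and of Tits algebras with Clifford components is exactly $[\C_+]$ (or $[\C_-]$) — vanishes, and that this lift can moreover be chosen in $Z^1$ (not merely $H^1$) so that~\ref{inner} holds in the strong form used throughout the paper; this is where~\cite[\S42]{BI} and the discriminant-triviality hypothesis enter.
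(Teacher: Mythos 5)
Your proposal follows essentially the same route as the paper: the biconditional via the exact sequence $1\to\mu_2\to\Hpin_{2n}\to\PGO_{2n}\to1$ and the identification of the connecting-map obstruction with the Brauer class of a Clifford component, then the common Borel variety and Poincar\'e polynomial~\eqref{poincare} to match the nontrivial indices, and finally the comparison of $\Ch^1(\PGO_{2n})$ with $\Ch^1(\Hpin_{2n})$ (as in Example~\ref{excharmap}) to see that the generator $x_2$ (or $x_1+x_2$) is killed, forcing $j_2=0$. One local slip: the claim that $\omega_\ell\in\T$ \emph{precisely when} $A_{\omega_\ell}$ is split is false in one direction (a Tits algebra may be split without the weight lying in the lattice), but your step (2) replaces this with the correct cocycle-lifting/obstruction argument, which is what the paper actually uses.
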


\begin{proof}
Let us pick one of the two (isomorphic) half-spin groups $\Spin^+_{2n}\subset \Spin_{2n}$, and consider the exact sequence
\[1\ra \mu_2\ra \Spin^+_{2n}\ra \PGO_{2n}\ra 1\]
As explained in \cite[\S29]{BI}, the isomorphism class of $(A,\sigma)$, which can be viewed as an element of $H^1(F,\text{PGO}_{2n})$, has two different liftings in $H^1(F,\PGO_{2n})$. Moreover, their images under the connecting morphism 
\[H^1(F,\PGO_{2n})\ra H^2(F,\mu_2),\]
are the Brauer classes or the two components $\C_+$ and $\C_-$ of the Clifford algebra. 
Therefore, one of the two components of $\C(A,\sigma)$ splits if and only if one of the two classes associated to $(A,\sigma)$ in $H^1(F,\PGO_{2n})$ 
lifts to $H^1(F,\Spin^+_{2n})$, and this proves the first assertion. 

When this hold, we let $\Spin^+(A,\sigma)$ be the relevant twisted form of $\Spin_{2n}^+$. The same argument as in the split case now shows that the non trivial indices of the $J$-invariant of $\Spin^+(A,\sigma)$ are equal to the non trivial indices of $J(A,\sigma)$, and again the only difference 
comes from the presentations of $\Ch^*(\Spin^+_{2n})$ and $\Ch^*(\PGO_{2n})$: one of the two generators $x_2$ and $x_1+x_2$ is killed by 
the pull-back map $\Ch^1(\PGO_{2n})\mapsto\Ch^1(\Spin^+_{2n})$ by~\ref{excharmap}. 
Hence the second index in $J(A,\sigma)$ is zero, and this finishes the proof. 
\end{proof}
If $(A,\sigma)$ is half-spin, we can refine the inequalities 
given in~\ref{even.cor} by applying Theorem~\ref{corj0} to the half-spin group 
$\Spin^+(A,\sigma)$. We get the following: 
\begin{cor}
\label{hs.cor}
Assume $\deg(A)\equiv 0[4]$ and $(A,\sigma)$ is half-spin, that is its Clifford algebra has a split component. 
The following hold: 
\begin{enumerate} 
\item If $\ii_A>0$, then $j_1>0$. 
\item If $\ii_A>1$, then $j_1>1$. 
\end{enumerate}
\end{cor}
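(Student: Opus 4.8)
The plan is to reduce Corollary~\ref{hs.cor} to Theorem~\ref{corj0} applied to the half-spin group $H=\Spin^+(A,\sigma)$, which satisfies condition~\ref{inner} by Proposition~\ref{hs.prop}. For $H$ the relevant quotient $\Lambda_\omega/\hat T_0$ is that of the half-spin lattice, so $s=1$: the unique degree $1$ generator of $\Ch^1(H)$ corresponds to a class which, in the half-spin case described in Example~\ref{excharmap}, pairs with the weight $\omega_1$ (up to the ambiguity $\omega_1\equiv\omega_{2m-1}+\omega_{2m}$). Hence by Example~\ref{excommonindex1} the common index $\ii_J$ for $H$ is simply the $2$-adic valuation of the Tits algebra $A_{\omega_1}$, which is $A$ itself, i.e. $\ii_J=\ii_A$.

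First I would invoke Proposition~\ref{hs.prop}, which identifies $J_2(H)=(j_1,j_3,\dots,j_r)$ with the nontrivial part of $J(A,\sigma)=(j_1,0,j_3,\dots,j_r)$; in particular the first entry $j_1$ of $J(A,\sigma)$ equals the first entry of $J_2(H)$. Then I would identify the Tits algebras of $H$: since $H$ is an inner twisted form of $\Spin^+_{2n}$ whose $\Lambda_\omega/\hat T_0$ is generated (mod torsion away from $2$) by the class of $\omega_1$, the Tits algebra $A_{\omega_1}$ of $H$ is Brauer-equivalent to $A$ by Example~\ref{FR} together with the fundamental relations (the half-spin lattice contains $\omega_3$ or $\omega_4$, so the corresponding Clifford component is split for $H$, and $[\C_+]+[\C_-]=[A]$ forces the remaining generator to carry the Brauer class $[A]$). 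Consequently the common index $\ii_J$ attached to $H$ in Definition before Example~\ref{excommonindex1} is exactly $\ii_A$.

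With these identifications in hand, both statements are immediate applications of Theorem~\ref{corj0} to $H$: if $\ii_A=\ii_J>0$ then $j_\ell>0$ for the unique degree $1$ index, i.e. $j_1>0$; if $\ii_A=\ii_J>1$ and $p=2$, then provided $k_1>1$ for $H$ we get $j_1>1$. So the one genuine verification needed is that the exponent $k_1$ attached to the degree $1$ generator of $\Ch^*(\Spin^+_{2n})$ satisfies $k_1>1$ when $\deg(A)=2n\equiv0\bmod 4$; this can be read off from Kac's table~\cite{Kc85} (equivalently~\cite[4.13]{PSZ}), exactly as the ``additional conditions on degrees'' remark after Corollary~\ref{even.cor} already noted in the $\PGO$ case.

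The main obstacle I anticipate is not the logic but the bookkeeping of lattices and Tits algebras for the half-spin group: one must be careful that the generator of $\Lambda_\omega/\hat T_0$ for $H$ is the class of $\omega_1$ and not of a Clifford weight, and that the associated Tits algebra is $A$ rather than a component of the Clifford algebra. This is where the discriminant-triviality and the fundamental relations of Example~\ref{FR} are used, and where one should double-check against Example~\ref{excharmap} that $x_2$ (or $x_1+x_2$), rather than $x_1$, is the one killed by pull-back to $\Ch^1(\Spin^+_{2n})$ — which is precisely why the surviving index is $j_1$ and the vanishing one is the second slot, consistent with Proposition~\ref{hs.prop}. Once that identification is pinned down, the corollary drops out of Theorem~\ref{corj0} with no further computation.
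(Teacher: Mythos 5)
Your proposal is correct and follows the same route as the paper: the paper likewise applies Theorem~\ref{corj0} to the half-spin group $\Spin^+(A,\sigma)$ furnished by Proposition~\ref{hs.prop}, observes that $\Ch^1(\Spin^+_{2n})=\Lambda_\omega/\T\otimes\mf$ is generated by $x_1=\pi(h_1)$ so that the common index is $\ii_J=\ii_A$, and then invokes $k_1>1$ from the table to get the second assertion. The only (shared) point to be careful about is that $k_1>1$ for $\Spin^+_{2n}$ is not automatic from $\deg(A)\equiv 0\ [4]$ alone (the table gives $2^{k_1}\parallel n$); the paper deduces it from the hypothesis $\ii_A>1$, exactly the step you flag as the remaining bookkeeping.
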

Indeed, in degree $1$,  the Chow group modulo $2$ of a half-spin group is
\[\Ch^1(\Spin^+_{2n})=\Ch^1(\BX)/\im(\cc^{(1)}_{hs})=\Lambda_\omega/\T\otimes_\zz \mf.\]
It is generated by $x_1=\pi(h_1)=\pi\bigl(c_1(\LL(\omega_1)\bigr)$. 
Hence the common index in this case is $\ii_J=\ii_A$. 
Moreover, if $\ii_A>1$, then $4|\deg(A)$ and $k_1>1$. 

\subsection*{Witt-equivalent algebras with involution}
As for quadratic forms, the $J$-invariant of an algebra with involution only depends on its Witt class, as we now proceed to show. 
Consider two Brauer-equivalent algebras $A$ and $B$, respectively endowed with the orthogonal involutions $\sigma$ and $\tau$. 
They can be represented 
as $({\mathrm{ End}}_D(M),\ad_h)$ and $({\mathrm {End}}_D(M'),\ad_{h'})$, for some hermitian modules $(M,h)$ and $(M',h')$ over a division algebra with orthogonal involution $(D,\ba)$, Brauer-equivalent to $A$. 
The algebras $(A,\sigma)$ and $(B,\tau)$ are said to be Witt-equivalent if the hermitian modules $(M,h)$ and $(M',h')$ are Witt-equivalent. 
If so, $(A,\sigma)$ and $(B,\tau)$ are split hyperbolic over the same fields. 
Hence, the corresponding twisted Borel varieties $\XX_A$ and $\XX_B$ are split over the function field of each other. 
By the comparison lemma~\cite[5.18(iii)]{PSZ}, the indecomposable motives $R_2\bigr(\PGO(A,\sigma)\bigr)$ and 
$R_2\bigr(\PGO(B,\tau)\bigr)$ are isomorphic. 
So they have the same Poincar\'e polynomial, and by~\eqref{poincare}, we get
\begin{prop}
\label{wittai.prop}
Let $(A,\sigma)$ and $(B,\tau)$ be two Witt-equivalent algebras with involution. 
All the non trivial indices in their $J$-invariant are equal. 
\end{prop}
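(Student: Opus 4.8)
The plan is to deduce the statement from the invariance of the indecomposable motive $R_2$, exactly as was done for quadratic forms in Proposition~\ref{witt.prop} and in Remark~\ref{triality.rem}. First I would fix a division algebra with orthogonal involution $(D,\ba)$ Brauer-equivalent to $A$, hence to $B$, and write $(A,\sigma)\simeq(\mathrm{End}_D(M),\ad_h)$ and $(B,\tau)\simeq(\mathrm{End}_D(M'),\ad_{h'})$ for hermitian modules $(M,h)$ and $(M',h')$ over $(D,\ba)$; by definition, Witt-equivalence of $(A,\sigma)$ and $(B,\tau)$ means that the anisotropic kernels are isometric, $h_{\mathrm{an}}\simeq h'_{\mathrm{an}}$.

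The crucial step is to check that $(A,\sigma)$ and $(B,\tau)$ become split and hyperbolic over exactly the same field extensions of $F$. For any $E/F$ one has: $A_E$ is split if and only if $D_E$ is split, if and only if $B_E$ is split; and $\sigma_E=\ad_{h_E}$ is hyperbolic if and only if $h_E$ is hyperbolic, if and only if $(h_{\mathrm{an}})_E\simeq(h'_{\mathrm{an}})_E$ is hyperbolic, if and only if $\tau_E$ is hyperbolic. Since for a form of $\PGO_{2n}$ with trivial discriminant being split amounts to the underlying algebra being split and the involution hyperbolic, this says precisely that $\PGO(A,\sigma)_E$ splits if and only if $\PGO(B,\tau)_E$ splits; equivalently, the twisted Borel varieties $\XX_A$ and $\XX_B$ have a rational point over exactly the same fields.

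Now, $\PGO(B,\tau)$ is an inner form of the split group $\PGO_{2n}$, so it is quasi-split, hence split, over $F(\XX_B)$; by the previous paragraph $\PGO(A,\sigma)$ also splits over $F(\XX_B)$, i.e. $\XX_A$ has a rational point over $F(\XX_B)$, and symmetrically $\XX_B$ has one over $F(\XX_A)$. Thus each of the two twisted varieties splits over the function field of the other, and the comparison lemma~\cite[5.18(iii)]{PSZ} yields an isomorphism of indecomposable motives $R_2\bigl(\PGO(A,\sigma)\bigr)\simeq R_2\bigl(\PGO(B,\tau)\bigr)$. Comparing Poincaré polynomials over $F_s$ via formula~\eqref{poincare}, in which the factors with $j_i=0$ contribute trivially, one reads off that the collection $\{(d_i,j_i)\mid j_i>0\}$ is the same on both sides; in particular the non-trivial indices of $J(A,\sigma)$ and $J(B,\tau)$ coincide.

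The main obstacle is the hermitian-form bookkeeping of the second paragraph — verifying that the locus where an adjoint algebra with involution becomes split and hyperbolic depends only on the Witt class of the underlying hermitian module — together with the standard, but worth recording, fact that splitness of a trivial-discriminant form of $\PGO_{2n}$ is detected separately by the algebra and by the involution. Once these are in place, the motivic conclusion is a direct application of~\cite{PSZ}. Note finally that, since Witt-equivalent algebras with involution may have different degrees, one cannot expect the full tuples $J(A,\sigma)$ and $J(B,\tau)$ to agree, but only their non-trivial entries, which is exactly what the statement asserts.
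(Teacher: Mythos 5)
Your proposal is correct and follows the paper's argument exactly: reduce Witt-equivalence to the statement that $(A,\sigma)$ and $(B,\tau)$ become split and hyperbolic over the same extensions, deduce that each twisted Borel variety splits over the function field of the other, apply the comparison lemma \cite[5.18(iii)]{PSZ} to identify the motives $R_2$, and read off the non-trivial indices from the Poincar\'e polynomial~\eqref{poincare}. The only difference is that you spell out the hermitian-form and quasi-split-implies-split details that the paper leaves implicit.
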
 
This proposition will prove useful to complete the classification in degree $8$. 
Before, we study the degree $6$ and degree $4$ cases. 
\subsection*{Classification results in degree $6$} 
Assume that $\deg(A)\equiv 2[4]$. 
From the table~\cite[4.13]{PSZ}, $6$ is the smallest value for which the $J$-invariant may be non trivial. 
This is not surprising, since in degree $2$, any quaternion algebra endowed with an orthogonal involution 
with trivial discriminant is split hyperbolic~\cite[(7.4)]{BI}. 
In degree $6$, the $J$-invariant is given by $J(A,\sigma)=(j_1)$, with $0\leq j_1\leq 2$. 
It can be computed as follows: 
\begin{prop}
 Let $A$ be a degree $6$ algebra endowed with an orthogonal involution $\sigma$ with trivial discriminant. 
Its $J$-invariant is given by 
\[J(A,\sigma)=(\ii_S),\]
where, as before, $\ii_S=\ii_+=\ii_-$ is the 
$2$-adic valuation of any component of the Clifford algebra $\C(A,\sigma)$. 
So, we have: 
\begin{enumerate} 
\item $J=(0)\iff (A,\sigma)$ is split hyperbolic. 
\item $J=(1)\iff (A,\sigma)$ is split isotropic and non hyperbolic. 
\item $J=(2)\iff(A,\sigma)$ is anisotropic. 
\end{enumerate}
\end{prop}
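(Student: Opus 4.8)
The plan is to read off the formula $J(A,\sigma)=(\ii_S)$ from the inequalities of this section, and then to distinguish the three alternatives according to the index of $A$.

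\emph{Step 1: the formula.} Since $\deg(A)=6$ we have $n=3$, which is odd, so $J(A,\sigma)$ is a single integer $(j_1)$ by the table~\cite[4.13]{PSZ}, with $0\le j_1\le 2$; also $\ii_S\le n-1=2$. Corollary~\ref{odd.cor} gives $j_1\le\ii_S$, together with $j_1>0$ when $\ii_S>0$ and (as $\deg(A)\ge 6$) $j_1>1$ when $\ii_S>1$. Running through the three values $\ii_S\in\{0,1,2\}$, these bounds force $j_1=\ii_S$ in each case, so $J(A,\sigma)=(\ii_S)$.

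\emph{Step 2: the split case.} By the fundamental relations~\ref{FR} we have $[A]=2[\C_+]$; since $\mathrm{ord}[\C_+]$ divides $4$, this shows $\mathrm{ord}[A]\le 2$, hence $\ind(A)\in\{1,2\}$ (the index divides $\deg(A)=6$ and has the same prime factors as the exponent), and moreover that $A$ is split as soon as $[\C_+]$ has order at most $2$, that is as soon as $\ii_S\le 1$. Suppose $A$ is split. Then $\sigma$ is adjoint to a $6$-dimensional quadratic form $\varphi$ with trivial discriminant; the Clifford algebra $\C(\varphi)$ is Brauer-equivalent to $\C_+$, so the $2$-adic valuation of $\ind\C(\varphi)$ equals $\ii_S$, and $J(A,\sigma)=J(\varphi)$ by Proposition~\ref{split.prop}. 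Now $(A,\sigma)$ is split hyperbolic (resp. split isotropic non-hyperbolic, resp. anisotropic) precisely when $\varphi$ is hyperbolic (resp. isotropic non-hyperbolic, resp. anisotropic), and by the discussion preceding Corollary~\ref{qf6} together with Corollary~\ref{qf6} itself these three cases correspond to $J(\varphi)=(0)$, $(1)$, $(2)$ respectively. Combined with Step~1 this proves the proposition whenever $A$ is split, in particular whenever $\ii_S\le 1$.

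\emph{Step 3: the index $2$ case, and conclusion.} Assume $\ind(A)=2$. Then $[A]\ne 0$, so $\mathrm{ord}[\C_+]=4$ by~\ref{FR}; since an order-$4$ Brauer class has index divisible by $4$, and $\ind\C_+$ divides $\deg\C_+=4$, we get $\ind\C_+=4$ and $\ii_S=2$, i.e. $J(A,\sigma)=(2)$. It remains to check that $(A,\sigma)$ is anisotropic. Passing to the function field $F_A$ of the Severi--Brauer variety of $A$ splits $A$; by the Merkurjev--Panin--Wadsworth index reduction formula~\cite{MPW}, $\ind(\C_+\otimes F_A)=\min\bigl(\ind\C_+,\ind(\C_+\otimes A)\bigr)$, and since $[\C_+\otimes A]=3[\C_+]=-[\C_+]$ again has index $4$, this minimum equals $4$. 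Thus $(A_{F_A},\sigma_{F_A})$ is adjoint to a $6$-dimensional quadratic form with trivial discriminant whose Clifford algebra has index $4$, hence has $J$-invariant $(2)$ and is anisotropic by Corollary~\ref{qf6}; therefore $\sigma_{F_A}$, and a fortiori $\sigma$, is anisotropic. Since the three alternatives are mutually exclusive and, by Steps~2 and~3, exhaustive, and each corresponds to exactly one value of $\ii_S$, the three displayed equivalences follow. The delicate point is exactly this last step---ruling out an isotropic $(A,\sigma)$ of degree $6$, trivial discriminant and index $2$: the function-field reduction settles it, but one must be careful with the index computation $\ind(\C_+\otimes F_A)=4$. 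An alternative is to note that the anisotropic kernel of such an isotropic involution would be a degree-$2$ algebra carrying an orthogonal involution of trivial discriminant (the discriminant being a Witt invariant of the underlying skew-hermitian form), which is impossible when the quaternion algebra is division by~\cite[(7.4)]{BI}.
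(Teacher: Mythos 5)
Your proof is correct, and Steps 1 and 2 coincide with the paper's argument: the equality $j_1=\ii_S$ read off from Corollary~\ref{odd.cor}, the observation via the fundamental relations~\ref{FR} that $\ii_S\le 1$ forces $A$ to split, and the reduction of the split case to Corollary~\ref{qf6} through Proposition~\ref{split.prop}. The genuine difference is in Step 3, the anisotropy of a non-split $(A,\sigma)$. The paper argues entirely over the base field: with $A=M_3(Q)$ and $Q$ division, an isotropic $\sigma$ would exhibit $(A,\sigma)$ as a hyperbolic extension, in the sense of~\cite[3.1]{Ga:01}, of $(Q,\sigma_{\mathrm{an}})$ with $d(\sigma_{\mathrm{an}})=d(\sigma)=1$ by~\cite[(7.5)]{BI}, which is impossible for a division quaternion algebra by~\cite[(7.4)]{BI}. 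You instead pass to the generic splitting field $F_A$ and use Merkurjev's index reduction formula to check that $\C_+$ keeps index $4$ there (the key computation $[\C_+\otimes A]=-[\C_+]$, hence $\ind(\C_+\otimes A)=4$, is stated correctly), so that $\sigma_{F_A}$ is adjoint to an anisotropic Albert form. This is precisely the descent strategy the paper deploys later in degree $8$ (Theorem~\ref{trial.thm}), and it is sound here; it is slightly heavier machinery than the paper's elementary involution-theoretic argument, but it has the advantage of being the pattern that generalizes. Your closing ``alternative'' is essentially the paper's own proof.
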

Note that the algebra can be split or non split in the last case.
 
\begin{proof} 
For degree reasons, the index $\ii_s$ is bounded by $2$. 
Hence the equality $J(A,\sigma)=(\ii_S)$ is a direct consequence of corollary~\ref{odd.cor}. 
Moreover, the fundamental relations~\ref{FR} imply that if $\ii_S\leq 1$, the algebra $A$ is split. If so, $\sigma$ is adjoint to a quadratic form $\varphi$, 
and the $J$-invariant of $(A,\sigma)$ is $J(A,\sigma)=J(\varphi)$ (see~\ref{split.prop}). 
By~\ref{qf6}, this proves (1) and (2), and also (3) in the split case. 
To finish the proof, it is enough to check that if $A$ is non split, then $\sigma$ is anisotropic. 
For the sake of contradiction, assume $A$ is non split, that is $A=M_3(Q)$ for some division quaternion algebra $Q$ over $k$, and 
$\sigma$ is isotropic. Since $A$ is the endomorphism ring of a $3$-dimensional $Q$-module, $(A,\sigma)$ is a hyperbolic extension, in the sense of~\cite[3.1]{Ga:01}, of $(Q,\sigma_{\mathrm {an}})$, for some 
orthogonal involution $\sigma_{\mathrm{an}}$ of $Q$. Moreover, by~\cite[(7.5)]{BI}, we have $d(\sigma_{\mathrm {an}})=d(\sigma)=1$. 
This is impossible if $Q$ is non split (see~\cite[(7.4)]{BI}). 
\end{proof}

\subsection*{Classification in degree $4$}
Assume the degree of $A$ satisfies $\deg(A)\equiv0[4]$. 
The first two parameters of $J(A,\sigma)=(j_1,j_2,\dots,j_r)$ correspond to generators of degree $1$, and we now have: 
\begin{lem}Assume the algebra $A$ has degree $\deg(A)\equiv 0[4]$, and consider the first indices $j_1$ and $j_2$ of its $J$-invariant. 
We have: 
\begin{enumerate} 
 \item $j_1=0\iff A$ is split; 
\item $j_2=0\iff (A,\sigma)$ is half-spin, i.e. its Clifford algebra $\C(A,\sigma)$ has a split component. 
\end{enumerate}
\end{lem}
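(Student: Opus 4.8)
The plan is to prove both equivalences as direct consequences of the structural results already established in this section, together with the two key inequalities of Corollary~\ref{even.cor}.

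For the first equivalence, the implication ``$A$ split $\Rightarrow j_1=0$'' is exactly Proposition~\ref{split.prop}: when $A$ is split, $\sigma$ is adjoint to a quadratic form $\varphi$ and $J(A,\sigma)=(0,J(\varphi))$, so in particular $j_1=0$. For the converse, I would argue contrapositively: if $A$ is not split, then $\ii_A>0$, and I must deduce $j_1>0$. This is precisely the content of part~(3) of Corollary~\ref{even.cor} provided $\ii_J>0$, but there is a subtlety: $\ii_J=\min\{\ii_A,\ii_+,\ii_-\}$ may vanish even when $\ii_A>0$, namely when $\C(A,\sigma)$ has a split component. So the argument splits into two cases. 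If $(A,\sigma)$ is \emph{not} half-spin, then $\ii_+,\ii_->0$ by Proposition~\ref{hs.prop}, hence $\ii_J>0$ and Corollary~\ref{even.cor}(3) gives $j_1>0$. If $(A,\sigma)$ \emph{is} half-spin, then I invoke Corollary~\ref{hs.cor}(1): since $\ii_A>0$, we get $j_1>0$ directly. In both cases $j_1>0$, which completes the first equivalence.

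For the second equivalence, the implication ``$(A,\sigma)$ half-spin $\Rightarrow j_2=0$'' is Proposition~\ref{hs.prop}, which asserts $J(A,\sigma)=(j_1,0,j_3,\dots,j_r)$ precisely in the half-spin case. For the converse, I again argue contrapositively: suppose $(A,\sigma)$ is not half-spin, so both $\ii_+>0$ and $\ii_->0$ by Proposition~\ref{hs.prop}; I must show $j_2>0$. Here I consider two subcases according to whether $A$ is split. If $A$ is not split, then $\ii_A>0$ as well, so $\ii_J=\min\{\ii_A,\ii_+,\ii_-\}>0$ and Corollary~\ref{even.cor}(3) yields $j_2>0$. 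If $A$ is split, then $\sigma$ is adjoint to a quadratic form $\varphi$ with trivial discriminant whose Clifford algebra has $2$-adic valuation $\ii_S=\ii_+=\ii_->0$; by Proposition~\ref{split.prop}, $J(A,\sigma)=(0,J(\varphi))$, so $j_2$ equals the first index of $J(\varphi)$, which is positive by Corollary~\ref{qf.cor}(2) since $\ii_S>0$. In both subcases $j_2>0$, completing the proof.

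The main obstacle, and the reason the proof is not a one-line citation, is the non-uniformity caused by the possibility that the Clifford algebra has a split component: in that situation the ``common index'' $\ii_J$ degenerates and Corollary~\ref{even.cor}(3) is too weak, so one is forced into the case analysis and must bring in the sharper half-spin inequality Corollary~\ref{hs.cor} (for the $j_1$ statement) and the split-case identification with a quadratic form plus Corollary~\ref{qf.cor} (for the $j_2$ statement). Once the bookkeeping of these cases is organized correctly, each individual step is immediate from the results already proved.
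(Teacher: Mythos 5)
Your proposal is correct and follows essentially the same route as the paper: the forward implications are read off from Propositions~\ref{split.prop} and~\ref{hs.prop}, and the converses use exactly the same case split, invoking Corollary~\ref{even.cor}(3) when the common index $\ii_J$ is positive, Corollary~\ref{hs.cor}(1) in the half-spin case, and Corollary~\ref{qf.cor}(2) via the split-case identification with a quadratic form. No gaps.
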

\begin{proof} 
We already know from the inequalities~\ref{even.cor}, or alternately from Propositions~\ref{split.prop} and~\ref{hs.prop}, that $j_1=0$ if $A$ is split and $j_2=0$ if $(A,\sigma)$ is half-spin. 
To prove the converse, assume first that $A$ is non split, that is $\ii_A>0$. If $\min\{\ii_+,\ii_-\}>0$, Corollary~\ref{even.cor}(3) shows that $j_1>0$. 
Otherwise, we are in the half spin case, so we can apply Corollary~\ref{hs.cor}, which also gives $j_1>0$. 
Similarly, assume that $\ii_+>0$, and $\ii_->0$. 
If $\ii_A>0$, Corollary~\ref{even.cor}(3) gives $j_2>0$. 
If $A$ is split, Corollary~\ref{qf.cor} gives the conclusion since, by Proposition~\ref{split.prop}, $j_2$ is the first index of the $J$-invariant of the underlying quadratic form. 
\end{proof}

For algebras of degree $4$, the $J$-invariant is given by $J(A,\sigma)=(j_1,j_2)$ with $0\leq j_1,j_2\leq 1$. 
Hence the previous lemma suffices to determine $J(A,\sigma)$. 
Moreover, it is well-known that $(A,\sigma)$ is hyperbolic if and only if one component of $\C(A,\sigma)$ is split (see~\cite[(15.14)]{BI}). 
So we can rephrase the result as follows: 
\begin{lem} 
\label{deg4.lem}
Let $A$ be a degree $4$ algebra endowed with an orthogonal involution $\sigma$ with trivial discriminant. 
Its $J$-invariant $J(A,\sigma)=(j_1,j_2)$ can be computed as follows: 
\begin{enumerate} 
\item $j_1$ is $0$ if $A$ is split and $1$ otherwise; 
\item $j_2$ is $0$ if $\sigma$ is hyperbolic, and $1$ otherwise. 
\end{enumerate}
\end{lem}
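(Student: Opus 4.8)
The plan is to deduce both statements directly from the preceding lemma (which characterizes when $j_1$ and $j_2$ vanish for $\deg(A)\equiv0[4]$) together with the degree‑$4$ entry of the table \cite[4.13]{PSZ}. First I would recall that for $\deg(A)=4$ the group $\PGO(A,\sigma)$ is adjoint of type $\D_2$ and that \cite[4.13]{PSZ} gives $J(A,\sigma)=(j_1,j_2)$ with $0\le j_1\le 1$ and $0\le j_2\le 1$, so each index equals either $0$ or $1$; hence to compute it, it suffices to decide whether it is zero.

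For assertion (1): by the preceding lemma, $j_1=0$ if and only if $A$ is split. Since $j_1\le 1$, this forces $j_1=1$ whenever $A$ is non‑split, which is exactly the claimed dichotomy. For assertion (2): again by the preceding lemma, $j_2=0$ if and only if $(A,\sigma)$ is half‑spin, i.e.\ the Clifford algebra $\C(A,\sigma)=\C_+\times\C_-$ has a split component. The only additional input is the classical fact \cite[(15.14)]{BI} that a degree‑$4$ algebra with orthogonal involution of trivial discriminant is hyperbolic precisely when one component of its Clifford algebra is split. Combining these, $j_2=0$ if and only if $\sigma$ is hyperbolic, and since $j_2\le 1$ it equals $1$ otherwise.

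I do not expect a genuine obstacle here: the substantive work has already been carried out in Corollaries~\ref{even.cor}, \ref{hs.cor}, Corollary~\ref{qf.cor} and the preceding lemma, which together pin down exactly when $j_1$ and $j_2$ vanish. The only point requiring a little care is invoking the correct row of \cite[4.13]{PSZ} so as to know that both parameters are bounded by $1$; were $j_2$ a priori allowed to be larger, one would also need the refined inequalities in the second parts of Corollary~\ref{even.cor}, but in degree $4$ the hypotheses $\deg(A)\equiv0[8]$ and $\deg(A)\ge 8$ fail, so those refinements are neither available nor needed.
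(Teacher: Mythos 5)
Your proposal is correct and follows essentially the same route as the paper: the paper likewise derives the lemma from the preceding lemma characterizing $j_1=0$ and $j_2=0$, the bound $0\le j_1,j_2\le 1$ from the degree-$4$ row of the table in \cite[4.13]{PSZ}, and the classical equivalence \cite[(15.14)]{BI} between hyperbolicity and a split Clifford component. No gaps.
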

We can give a precise description of $(A,\sigma)$ is each case. 
As explained in~\cite[(15.14)]{BI}, since $A$ has degree $4$ and $\sigma$ has trivial discriminant, $(A,\sigma)$ has a canonical decomposition 
$(A,\sigma)=(Q_1,\ba)\otimes_F(Q_2,\ba)$, where the quaternion algebras $Q_1$ and $Q_2$ are the two components of 
the Clifford algebra $\C(A,\sigma)$, each endowed with its canonical involution. 
The algebra $A$ is split if and only if $Q_1$ and $Q_2$ are isomorphic, in which case $\sigma$ is adjoint to the norm form of 
$Q_1=Q_2$, which is a $2$-fold Pfister form. In this case, $J=(0,0)$ if this form is hyperbolic and $J=(0,1)$ if it is anisotropic. 
If $A$ is non split, then $Q_1$ and $Q_2$ are not isomorphic. If one of them, say $Q_1$ is split, then $A$ has index $2$, $A=M_2(Q_2)$, 
$\sigma$ is hyperbolic, and $J=(1,0)$. Otherwise, $J=(1,1)$, the involution is anisotropic, and $A$ has index $2$ or $4$. 
Using this description, one may construct explicit examples for each possible value of the $J$-invariant. 

%%%%%%%%%%%%%%%%%%%%%%%%%%%%%%%%%%%%%%%%%%%%%%%%%%%%%%%%%%%%%%%%%%%%%%%%%
%%%%%%%%%%%%%%%%%%%%%%%%%%%%%%%%%%%%%%%%%%%%%%%%%%%%%%%%%%%%%%%%%%%%%%%%%

\section{The trialitarian case}\label{tricase}

From now on, we assume that $(A,\sigma)$ has degree $8$. 
The $J$-invariant of $(A,\sigma)$ is a triple $J(A,\sigma)=(j_1,j_2,j_3)$ with $0\leq j_1,j_2\leq 2$ and $0\leq j_3\leq 1$. 
In this section, we will explain how to compute $J(A,\sigma)$. As a consequence of our results, we will prove: 
\begin{cor} 
\label{values.cor} 
(i) There is no algebra of degree $8$ with orthogonal involution with trivial discriminant having $J$-invariant equal to $(1,2,0)$, $(2,1,0)$ or $(2,2,0)$. 

(ii) All other possible values do occur. 
\end{cor}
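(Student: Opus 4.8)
The plan is to combine the inequalities of Corollaries~\ref{odd.cor}–\ref{even.cor}, the structural propositions about the split and half-spin cases (Prop.~\ref{split.prop}, Prop.~\ref{hs.prop}, Cor.~\ref{hs.cor}), the Witt-equivalence invariance (Prop.~\ref{wittai.prop}), and the classical fact~\cite[(8.31)]{BI} on hyperbolic involutions in degree divisible by $4$, together with the classification of $8$-dimensional quadratic forms of Prop.~\ref{qf8}. For part~(i), the key observation is that the triality-symmetry noted in~\ref{triality.rem} forces $\{j_1,j_2\}$ to be symmetric up to swapping, so the proscribed triples $(1,2,0)$, $(2,1,0)$, $(2,2,0)$ all have $j_3=0$ and at least one of $j_1,j_2$ equal to $2$. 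First I would show that $j_3=0$ implies, via the concrete description of the degree $1$ generators and the way $j_3$ controls the degree $2$ part of the motive, that $(A,\sigma)$ must be isotropic — indeed by Prop.~\ref{qf8} in the split case $j_2=0$ corresponds to isotropy, and the half-spin analysis of Prop.~\ref{hs.prop} and Cor.~\ref{hs.cor} reduces the general case to that one. An isotropic $(A,\sigma)$ of degree $8$ with trivial discriminant is Witt-equivalent to a degree $6$ algebra with orthogonal involution, whose $J$-invariant has first entry at most $2$ but whose Clifford components, by the fundamental relations~\ref{FR} in degree $6$, carry the index information.

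The heart of part~(i) is then: if additionally $(A,\sigma)$ is hyperbolic, then $\deg A\equiv 0\ [8]$ and~\cite[(8.31)]{BI} forces one component of the Clifford algebra to split, i.e. $(A,\sigma)$ is half-spin; hence $\ii_J=\ii_A$ by the half-spin discussion, and Cor.~\ref{hs.cor} bounds $j_1\le\ii_A$ but the hyperbolicity $A=M_2(B)$ forces $\ii_A\le 2$, while simultaneously hyperbolicity of $\sigma$ forces the underlying residual form (after reduction to degree $6$) to be hyperbolic or split isotropic, capping $\min\{j_1,j_2\}$. More precisely, the scenario $j_3=0$ with $\{j_1,j_2\}\ni 2$ would require an anisotropic Albert-type residual form sitting inside a split-or-hyperbolic situation, which is exactly the contradiction: I would argue that $j_3=0$ together with $\max\{j_1,j_2\}=2$ forces the degree $6$ Witt-representative to be anisotropic (by Cor.~\ref{qf6} and its algebra analogue, since index $4$ on a Clifford component means the residual object is anisotropic), but an anisotropic degree $6$ algebra with involution cannot be the Witt reduction of an isotropic-but-controlled degree $8$ one with $j_3=0$ unless $j_1,j_2$ are balanced in a way incompatible with $(1,2)$, $(2,1)$, $(2,2)$. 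The cleanest route is probably to invoke~\cite[(8.31)]{BI} directly: I claim $j_3=0$ and $4\,|\,i_{\mathrm{Cliff}}$ is impossible, because the only way to have a Clifford component of index $4$ while $\sigma$ is isotropic enough to kill $j_3$ is to be in a half-spin situation where~\cite[(8.31)]{BI} then splits the \emph{other} component, collapsing its index to $1$ and hence making the corresponding $j$ equal to $0$ — giving $(2,0,0)$ or $(0,2,0)$, never $(1,2,0)$, $(2,1,0)$ or $(2,2,0)$.

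For part~(ii), the strategy is explicit construction: for each remaining triple in the box $0\le j_1,j_2\le 2$, $0\le j_3\le 1$ not excluded by the Steenrod table of~\S\ref{appendix} and not among the three of part~(i), I would exhibit $(A,\sigma)$ realizing it. The split cases $j_1=0$ come for free from Prop.~\ref{split.prop} and the classification table for $8$-dimensional quadratic forms in~\S\ref{applquad} (every value $(0,j_2,j_3)$ with the form-table constraints is realized by an explicit $\varphi$). For $j_1>0$ one takes $A$ non-split of index $2$ or $4$: index $2$ algebras $M_2(Q)$ or $M_4(Q)$ with suitable involutions (hyperbolic or of the type in~\ref{12.ex}) give the $j_1=1$ row, and a division algebra of degree $8$ and exponent $2$ with an anisotropic orthogonal involution of trivial discriminant, chosen so that its Clifford components have the prescribed indices, gives $j_1=2$; here one uses Cor.~\ref{even.cor} to pin down the values from the indices $\ii_A,\ii_+,\ii_-$, and~\ref{trial.thm} (the triality computation) to confirm $j_3$. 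The main obstacle I anticipate is part~(i): establishing cleanly that $j_3=0$ propagates to an isotropy/hyperbolicity statement strong enough to let~\cite[(8.31)]{BI} bite, since this is where the motivic hypothesis $j_3=0$ must be converted into honest algebraic structure of $(A,\sigma)$ — the rest, including part~(ii), is essentially bookkeeping over the classification table once the trialitarian formula of~\ref{trial.thm} is in hand.
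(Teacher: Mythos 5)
Your overall plan for part (i) points at the right two ingredients --- Theorem~\ref{trial.thm} and the hyperbolicity result~\cite[(8.31)]{BI} --- but the step that actually makes them connect is missing. The fact \cite[(8.31)]{BI} applies to \emph{hyperbolic} involutions, not isotropic ones, so before you can "let (8.31) bite" you must convert isotropy into hyperbolicity. The paper's argument does this by passing (via triality) to the member of the triple of maximal index: if $j_1,j_2\geq 1$ with $\max\{j_1,j_2\}=2$, then Theorem~\ref{trial.thm} forces all three algebras to be non-split and the top one, $C$, to have index $\geq 4$; since $\deg C=8$, $C=M_2(D)$ with $D$ division of degree $4$, so an isotropic $\gamma$ on $C$ is automatically hyperbolic, and then (8.31) splits one component of $\C(C,\gamma)=A\times B$ --- contradiction. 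Your "cleanest route" asserts instead that isotropy plus a Clifford component of index $4$ puts you "in a half-spin situation"; that is precisely the conclusion to be proved, not a fact you can invoke. Two further problems: your proposed re-derivation of "$j_3=0\Rightarrow$ isotropic" via Prop.~\ref{hs.prop} and Cor.~\ref{hs.cor} does not work (the half-spin propositions say nothing about reducing anisotropy of a general $(A,\sigma)$ to the split case --- the actual reduction in the proof of Theorem~\ref{trial.thm} rests on the generic-splitting anisotropy theorems of Karpenko, Sivatski and Parimala--Sridharan--Suresh); you should simply quote the theorem. And the excursion through Witt-equivalence with a degree~$6$ algebra is a dead end: it never produces the contradiction and can be deleted.

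Part (ii) is where most of the paper's work lies, and your proposal essentially defers it. The inequalities of Cor.~\ref{even.cor} do \emph{not} "pin down the values from the indices"; even Theorem~\ref{trial.thm} only computes $j_1,j_2$ once you know the indices of \emph{all three} members of the trialitarian triple, and the remark after that theorem warns that for the maximal-index member the first entry can be strictly smaller than $\min\{\ii,2\}$ (Example~\ref{111.ex} exhibits $J=(1,1,1)$ for an algebra of index $4$). So "a division algebra of degree $8$ chosen so that its Clifford components have the prescribed indices" is not a construction: producing triples with index patterns $(2,4,4)$ and $(2,2,2)$ carrying anisotropic involutions, and an isotropic non-split non-half-spin triple for $(1,1,0)$, is exactly the nontrivial content, which the paper supplies via Dejaiffe's direct sums (Prop.~\ref{triple.prop}), Merkurjev's index reduction, and Prop.~\ref{iso.prop}; the value $(2,2,1)$ is obtained from a generic cocycle. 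Without these constructions the existence half of the corollary is unproved.
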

In particular, this shows that the restrictions described in the table~\cite[4.13]{PSZ} (see also \S\ref{appendix}), which were obtained by applying the Steenrod operations on $\Ch^*(G_0)$ ({\em loc. cit.} 4.12) are not the only ones. 

Recall that the group $\PGO(A,\sigma)$ is of type $\D_4$. 
To complete the classification in this case, we need to understand the action of the symmetric group $S_3$ on the $J$-invariant (see~\ref{triality.rem}). 
Let $(B,\tau)$ and $(C,\gamma)$ be the two components of the Clifford algebra $\C(A,\sigma)$, each endowed with its canonical involution. 
It follows from the structure theorems~\cite[(8.10) and(8.12)]{BI} that both are degree $8$ algebras with orthogonal involutions. 
The triple $\bigl((A,\sigma),(B,\tau),(C,\gamma)\bigr)$ is a trialitarian triple in the sense of {\em loc.cit.} \S42.A, and in particular, 
the Clifford algebra of any of those three algebras with involution is the direct product of the other $2$. 
Hence, if one of them, say $(A,\sigma)$ is split, then the other two are half-spin. 
\begin{dfn}
 The trialitarian triple $\bigl((A,\sigma),(B,\tau),(C,\gamma)\bigr)$ is said to be ordered by indices if the indices of the algebras $A$, $B$ and $C$ satisfy
\[\ind(A)\leq\ind(B)\leq\ind(C).\]
\end{dfn}
The $J$-invariant of such a triple can be computed as follows:
\begin{thm}
\label{trial.thm}
Let $\bigl((A,\sigma),(B,\tau),(C,\gamma)\bigr)$ be a trialitarian triple ordered by indices, so that 
$\ii_A\leq\ii_B\leq\ii_C$. 
The $J$-invariants are given by 
\[
J(A,\sigma)=(j,j',j_3)\mbox{\ \  and\ \  }
J(B,\tau)=J(C,\gamma)=(j',j,j_3), 
\]
where $j=\min\{\ii_A,2\}$ and $j'=\min\{\ii_B,\ii_C,2\}=\min\{\ii_B,2\}$. 

Moreover, the third index $j_3$ is $0$ if the involution is isotropic and $1$ otherwise.  
\end{thm}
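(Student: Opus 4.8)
The plan is to split the statement into three parts: the computation of the first index $j$ of $J(A,\sigma)$, the computation of the second index $j'$, and the value of $j_3$; then to deduce $J(B,\tau)$ and $J(C,\gamma)$ by triality. For the first index, I would use Corollary~\ref{even.cor}(1), which gives $j_1\leq\ii_A$, together with the upper bound $j_1\leq 2$ from the table~\cite[4.13]{PSZ}, so $j\leq\min\{\ii_A,2\}$; the reverse inequality is where the argument has content. When $\ii_A\geq 1$ the algebra $A$ is non-split, and I would invoke Corollary~\ref{even.cor}(3): since the triple is ordered by indices and $\ii_A\geq 1$, we have $\ii_B,\ii_C\geq 1$ too (an algebra Brauer-equivalent to the tensor product of the two Clifford components, by the fundamental relations~\ref{FR}, so if $A$ is non-split at least one Clifford component is non-split, and then actually both are since $\ii_A\le\ii_B\le\ii_C$), hence $\ii_J=\min\{\ii_A,\ii_+,\ii_-\}\geq 1$ and so $j_1>0$. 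If moreover $\ii_A\geq 2$, then $\ii_J\geq 2$ as well and $\deg A=8\equiv 0[8]$, so Corollary~\ref{even.cor}(4) gives $j_1>1$, i.e.\ $j_1=2=\min\{\ii_A,2\}$. This pins down $j$.

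For the second index $j'$, I would first note that $\min\{\ii_B,\ii_C,2\}=\min\{\ii_B,2\}$ since $\ii_B\leq\ii_C$, and that by~\ref{FR} (the case $\deg A\equiv 0\bmod 4$) the three algebras $A$, $\C_+=B$, $\C_-=C$ satisfy $[A]+[B]+[C]=0$, so $\ii_J=\min\{\ii_A,\ii_B,\ii_C\}=\min\{\ii_A,\ii_B\}=\ii_A$, consistent with Example~\ref{excommonindex2}. The upper bound $j'=j_2\leq\min\{\ii_+,\ii_-\}=\ii_B$ is Corollary~\ref{even.cor}(2), and $j_2\leq 2$ from the table. For the lower bound: if $\ii_B\geq 1$, then since the triple is ordered $\ii_A\leq\ii_B$, but we do \emph{not} necessarily have $\ii_A\geq 1$, so Corollary~\ref{even.cor}(3) does not directly apply. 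Here is the crux: when $A$ is split ($\ii_A=0$), $\sigma$ is adjoint to a quadratic form $\varphi$ of dimension $8$ and trivial discriminant, and by Proposition~\ref{split.prop} we have $J(A,\sigma)=(0,J(\varphi))$, so $j_2$ and $j_3$ are read off from $J(\varphi)=(j_1(\varphi),j_2(\varphi))$ via Proposition~\ref{qf8}: $j_1(\varphi)=\min\{\ii_S,2\}$ where $\ii_S$ is the $2$-adic valuation of the Clifford algebra $\C(\varphi)$, which is Brauer-equivalent to $B$ (equivalently $C$, as they have the same index by~\ref{FR}), so $\ii_S=\ii_B=\ii_C$; hence $j'=j_2=\min\{\ii_B,2\}$ as claimed. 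When $A$ is non-split, $\ii_A\geq 1$ forces $\ii_B,\ii_C\geq 1$ as above, so $\ii_J\geq 1$ and Corollary~\ref{even.cor}(3) gives $j_2>0$; if $\ii_B\geq 2$ then (again using $\ii_A\le\ii_B$, but note $\ii_A$ could be $1$) I would argue $\ii_J=\ii_A$, so $\ii_J\geq 2$ needs $\ii_A\geq 2$—this is the subtle point and likely where triality is genuinely needed: passing to $(B,\tau)$, whose Clifford components are $(A,\sigma)$ and $(C,\gamma)$, the common index for $B$ is $\min\{\ii_B,\ii_A,\ii_C\}=\ii_A$ again, which does not immediately help, so one instead applies Corollary~\ref{even.cor}(5) ($\deg\geq 8$, $\ii_J>1$) only after checking $\ii_J>1$, i.e.\ $\ii_A>1$, reducing to the case $\ii_A=\ii_B=\ii_C=2$ or more. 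The remaining gap—ruling out $j_2=1$ when $\ii_B=2$ but $\ii_A=1$—is exactly the content of Corollary~\ref{values.cor}(i) in disguise and I expect the proof to handle it by the trialitarian symmetry of the motive: since $\PGO(A,\sigma)=\PGO(B,\tau)$ have the same Borel variety $\XX$ and hence $J(B,\tau)\in\{(j_1,j_2,j_3),(j_2,j_1,j_3)\}$ by~\ref{triality.rem}, and applying the already-established first-index formula to $(B,\tau)$ (with $\ii_B\geq 2$) forces the entry $2$ to appear, identifying $j_2=2$.

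For $j_3$: the bound $0\leq j_3\leq 1$ is from the table. If $\sigma$ is isotropic, I would reduce—using Proposition~\ref{wittai.prop} (Witt-invariance) and the structure of isotropic involutions—to a lower-degree or hyperbolic situation forcing $j_3=0$; concretely, an isotropic $(A,\sigma)$ of degree $8$ with trivial discriminant is Witt-equivalent to an algebra with involution of degree $4$ or $6$, whose $J$-invariant has no third entry, so $j_3=0$. If $\sigma$ is anisotropic, I would argue $j_3=1$ by a generic-splitting argument: pass to the function field $F_C$ of the Severi–Brauer variety of $C$ (or a generic splitting field of $\C(A,\sigma)$), over which by Laghribi's theorem~\cite[Thm.~4]{La96} (in the split case) or its analogue the involution stays anisotropic while the Clifford components become split, so the $J$-invariant becomes $(0,0,j_3)$ with $j_3=1$ by Proposition~\ref{qf8}; since $j_3$ can only decrease under extension~\ref{extension} and is at most $1$, it equals $1$ over $F$. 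Finally, the statement $J(B,\tau)=J(C,\gamma)=(j',j,j_3)$ follows by applying the already-proven formula for the first two indices to the trialitarian triples $\bigl((B,\tau),(C,\gamma),(A,\sigma)\bigr)$ and $\bigl((C,\gamma),(A,\sigma),(B,\tau)\bigr)$—which are again ordered by indices in the appropriate sense after reindexing—together with~\ref{triality.rem}, which restricts $J(B,\tau)$ and $J(C,\gamma)$ to the two-element set obtained by permuting the first two entries of $J(A,\sigma)$; since $\ii_B\leq\ii_C$ the first-index computation for $B$ and $C$ gives $\min\{\ii_B,2\}=j'$, selecting $(j',j,j_3)$. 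The main obstacle, as indicated, is the lower bound $j_2\geq\min\{\ii_B,2\}$ in the mixed case $\ii_A<\ii_B$, which is precisely the phenomenon behind Corollary~\ref{values.cor}, and I expect it to be settled by feeding the first-index formula back through the $S_3$-action on the triple.
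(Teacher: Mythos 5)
Your overall architecture tracks the paper's: upper bounds from Corollary~\ref{even.cor} and the table, lower bounds from the common index $\ii_J=\ii_A$ of the ordered triple, reduction to Propositions~\ref{split.prop} and~\ref{qf8} when $A$ is split, and for $j_3$ a mix of Witt-equivalence (Proposition~\ref{wittai.prop}) in the isotropic case and generic splitting with anisotropy preservation in the anisotropic case. The computation of $j=j_1$, and of $j'$ in the subcases $j=j'$ and $j=0<j'$, is correct.

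The genuine gap is exactly where you locate it --- the case $\ii_A=1$, $\ii_B\geq 2$ --- but your proposed fix does not work. You want to ``apply the already-established first-index formula to $(B,\tau)$'' to force a $2$ into $J(B,\tau)$ and hence, via~\ref{triality.rem}, to conclude $j_2=2$. But the lower bound in your first-index argument comes from Corollary~\ref{even.cor}(3),(4), i.e.\ from Theorem~\ref{corj0}, whose hypothesis bears on the \emph{common} index of the triple, which here is $\ii_J=\ii_A=1$; so for $(B,\tau)$ it yields only $j_1(B,\tau)\geq 1$, not $j_1(B,\tau)=2$. Indeed the identity $j_1(D,\rho)=\min\{\ii_D,2\}$ is false in general for a non-minimal member of a triple (see Remark~(i) after the theorem and Example~\ref{111.ex}, where $j_1<\ii_{C'}=2$): it holds for $B$ only as a \emph{consequence} of the theorem, so invoking it here is circular, as is feeding back Corollary~\ref{values.cor}(i). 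The paper closes this case by a mechanism absent from your plan: pass to the function field $F_A$ of the Severi--Brauer variety of $A$; since $C\sim A\otimes B$ by~\ref{FR}, Merkurjev's index reduction formula gives $\ind(B_{F_A})=\min\{\ind(B),\ind(A\otimes B)\}=\ind(B)$, so over $F_A$ one is in the split case with $\ii_B$ unchanged, where $j_2=2$ by~\ref{split.prop} and~\ref{qf8}; monotonicity of the $J$-invariant under extension (\ref{extension}) then gives $j_2\geq 2$ over $F$. The same device ($J(B,\tau)_{F_A}=(2,0,j_3)$ plus~\ref{triality.rem} and monotonicity) is what selects $(2,1,j_3)$ for $J(B,\tau)$; your alternative of re-ordering the triple with $B$ first is vacuous, since $((B,\tau),(C,\gamma),(A,\sigma))$ is not ordered by indices unless all indices coincide. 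A secondary imprecision: for $j_3$ in the anisotropic non-split case one must extend to $F_A$ (not $F_C$, over which $A\sim B$ need not split) and quote the anisotropy theorems for involutions of Karpenko, Sivatski and Parimala--Sridharan--Suresh; Laghribi's theorem covers only the split case and enters through~\ref{qf8}.
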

\begin{rem}

(i) The first index of the $J$-invariant of a degree $8$ algebra with involution $(D,\rho)$ 
is $\min\{\ii_D,2\}$ if $D$ is not of maximal index in its triple. But  it might be strictly smaller in general. 
In~\ref{111.ex} below, we will give an explicit example where $j_1<\ii_D=2$. 

(ii) By~\ref{triality.rem}, we already know that $j_3$ does not depend on the choice of an element of the triple. 
On the other hand, as explained in~\cite{Ga:trial}, the involutions $\sigma$, $\tau$ and $\gamma$ are either all isotropic or all anisotropic. 
The triple is said to be isotropic or anisotropic accordingly. 

\end{rem}
\begin{proof} 
To start with, let us compute the first two indices $j_1$ and $j_2$ of the $J$-invariant of $(A,\sigma)$. 
Since we are in degree $8$, they are both bounded by $2$. 
Moreover, the triple being ordered by indices, the common index is given by $\ii_J=\ii_A$. 
So the equality $j_1=j$ follows directly from the inequalities of Corollary~\ref{even.cor}. 
If additionally $j'=j$,  the very same argument gives $j_2=j'$. 
Assume now that $j$ and $j'$ are different, that is $j<j'$. 
If so, $j=0$ or $j=1$. In the first case, we have $\ii_A=0$ so that the algebra $A$ is split, and the result follows from~\ref{qf8} and 
~\ref{split.prop}. 
The only remaining case is $j=\ii_A=1$ and $\ii_B\geq 2$, so that $j'=2$. 
Consider the function field $F_A$ of the Severi-Brauer variety of $A$, which is a generic splitting field of $A$. 
By the fundamental relations~\ref{FR}, the algebra $C$ is Brauer equivalent to $A\otimes B$. 
Hence Merkurjev's index reduction formula~\cite{Mer} says 
\[\ind(B_{F_A})=\min\{\ind(B),\ind(B \otimes A)\}=\ind(B).\]
So the values of $\ii_B$ and $j'$ are the same over $F$ and $F_A$. 
We know the result holds over $F_A$ by reduction to the split case.  
Since the index $j_2$ can only decrease under scalar extension, 
we get $j_2\geq j'=2$, which concludes the proof in this case. 

So the $J$-invariant of $(A,\sigma)$ is given by 
$J(A,\sigma)=(j,j',j_3)$ for some integer $j_3$. 
Let us now compute the $J$-invariant of $(B,\tau)$ and $(C,\gamma)$. 
Recall from~\ref{triality.rem} that 
$(j,j',j_3)$ and $(j',j,j_3)$ are the only possible values.  
So, if $j=j'$, there is no choice and we are done. 
Again, there are two remaining cases. Assume first that $j=\ii_A=0$ and $j'\geq 1$, so that $J(A,\sigma)=(0,j',j_3)$. 
Since $A$ is split, $(B,\tau)$ and $(C,\gamma)$ are half-spin, so they have trivial $j_2$ and this gives the result.  
Assume now that $j=1$ and $j'=2$, so that $J(A,\sigma)=(1,2,j_3)$. 
By the previous case, over the field $F_A$, both $(B,\tau)$ and $(C,\gamma)$ have $J$-invariant 
$(2,0,j_3)$. So the value over $F$ has to be $(2,1,j_3)$. 

To conclude the proof, it only remains to compute $j_3$. 
If $A$ is split, this was done in~\ref{qf8}. 
In the anisotropic case, we can reduce to the split case by generic splitting. 
Indeed, by~\cite{Kar:div} in the division case, \cite[Prop.~3]{Siv} in index $4$, and \cite[Cor.~3.4]{PSS} in index $2$ (see also~\cite{Kar:anis}) 
the triple remains anisotropic after scalar extension to a generic splitting field $F_A$ of the algebra $A$. 
Hence $j_3$ is equal to $1$ over $F_A$, and this implies $j_3=1$. 
In the isotropic case, if $\ii_C\geq 2$, then we actually are in the split case. 
Indeed, if $\ind(C)\geq 4$ and $\gamma$ is isotropic, then $C=M_2(D)$ for some degree $4$ division algebra $D$, and $\gamma$ has to be hyperbolic. 
So by~\ref{hypinv}, $(C,\gamma)$ is half-spin, that is $A$ is split. 
The only remaining case is $\ii_A=\ii_B=\ii_C=1$ and all three involutions are isotropic. 
In this case, $(A,\sigma)$ is Witt-equivalent to a non-split algebra of degree $4$ with anisotropic involution, which has $J$-invariant $(1,1)$ by~\ref{deg4.lem}.
Hence, in view of~\ref{wittai.prop},
the $J$-invariant of $(A,\sigma)$, which already has $j_1=j_2=1$ must have $j_3=0$.  
\end{proof}

The first part of Corollary~\ref{values.cor} follows from Theorem~\ref{trial.thm}. 
Indeed, if one of $j_1$, $j_2$ is $2$ and the other one is $\geq 1$, then 
the algebras $A$, $B$ and $C$ are all three non split, and $B$ and $C$ have index $\geq 4$. 
By~\ref{hypinv}, since $A$ and $B$ are non split, the involution $\gamma$ on $C$ is not hyperbolic, so it is anisotropic, and the theorem gives $j_3=1$. 

\subsection*{Explicit examples} 
We now prove the second part of Corollary~\ref{values.cor}. 
Recall from~\ref{split.prop} that if $A$ is split, and $\sigma$ is adjoint to a quadratic form $\varphi$, then 
$J(A,\sigma)=(0,J(\varphi))$. 
Hence any triple with $j_1=0$ is obtained for a suitable choice of $\varphi$ by~\ref{qf8}. 
Considering the components of the even Clifford algebra of those quadratic forms, we also obtain all triples with $j_2=0$ by Theorem~\ref{trial.thm}. 
The maximal value $(2,2,1)$ is obtained from a generic cocycle; such a cocycle exists by~\cite[Thm. 6.4(ii)]{KM06}.  
Hence, it only remains to prove that the values $(1,1,0)$, $(1,1,1)$, $(1,2,1)$ and $(2,1,1)$ occur. For any of those, we will produce an explicit example, 
inspired by the trialitarian triple constructed in~\cite[Lemma 6.2]{QT}

Our construction uses the notion of direct sum for algebras with involution, which was introduced by Dejaiffe~\cite{Dej}.  
Consider two algebras with involution $(E_1,\theta_1)$ and $(E_2,\theta_2)$ which are Morita-equivalent, 
that is $E_1$ and $E_2$ are Brauer equivalent and the involutions $\theta_1$ and $\theta_2$ are of the same type. 
Dejaiffe defined a notion of Morita equivalence data, and explains how to associate to any such data an algebra with involution 
$(A,\sigma)$, which is called a direct sum of $(E_1,\theta_1)$ and $(E_2,\theta_2)$. 
In the split orthogonal case, if $\theta_1$ and $\theta_2$ are respectively adjoint to the quadratic forms $\varphi_1$ and $\varphi_2$, 
any direct sum of $(E_1,\theta_1)$ and $(E_2,\theta_2)$ is adjoint to $\varphi_1\oplus\qform{\lambda}\varphi_2$ for
some $\lambda\in F^\times$, and the choice of a Morita-equivalence data precisely amounts to the choice of a scalar $\lambda$. In general, as the split case shows, there exist non isomorphic direct sums of
two given algebras with involution. We will use the following characterization of direct sums~\cite[Lemma 6.3]{QT}:
\begin{lem}
The algebra with involution $(A,\sigma)$ is a direct sum of $(E_1,\theta_1)$ and $(E_2,\theta_2)$ 
if and only if there is an embedding of the direct product $(E_1,\theta_1)\times(E_2,\theta_2)$ in $(A,\sigma)$ and 
$\deg(A)=\deg(E_1)+\deg(E_2)$. 
\end{lem}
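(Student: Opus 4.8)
The plan is to prove the two implications of the equivalence separately, using the characterization of direct sums from Dejaiffe~\cite{Dej} together with the degree bookkeeping that makes the embedding rigid. First I would recall the construction: a Morita equivalence datum between $(E_1,\theta_1)$ and $(E_2,\theta_2)$ (both Morita-equivalent to a fixed division algebra with involution $(D,\ba)$) amounts to writing $E_i=\mathrm{End}_D(M_i)$ with $\theta_i=\ad_{h_i}$ for hermitian modules $(M_i,h_i)$, and forming $(A,\sigma)=(\mathrm{End}_D(M_1\oplus M_2),\ad_{h_1\perp h_2})$. From this description the direct sum $(A,\sigma)$ comes equipped with a diagonal embedding $(E_1,\theta_1)\times(E_2,\theta_2)\hookrightarrow(A,\sigma)$ (acting block-diagonally on $M_1\oplus M_2$), and visibly $\deg(A)=\deg(E_1)+\deg(E_2)$ since $\dim_D(M_1\oplus M_2)=\dim_D M_1+\dim_D M_2$. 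This gives the ``only if'' direction.

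For the ``if'' direction, I would start from a given embedding $\iota\colon(E_1,\theta_1)\times(E_2,\theta_2)\hookrightarrow(A,\sigma)$ with $\deg(A)=\deg(E_1)+\deg(E_2)$. Writing $A=\mathrm{End}_D(M)$ with $\sigma=\ad_h$ for a hermitian module $(M,h)$ over the underlying division algebra $(D,\ba)$ (which is Brauer-equivalent to $A$, hence to $E_1$ and $E_2$), the central idempotents $e_1=\iota(1,0)$ and $e_2=\iota(0,1)$ are orthogonal, $\sigma$-invariant, and sum to $1$. They decompose $M=e_1M\oplus e_2M$ as an orthogonal direct sum for $h$, say $h=h_1'\perp h_2'$, with $E_i$ acting on $e_iM$ through an embedding $E_i\hookrightarrow\mathrm{End}_D(e_iM)$ compatible with involutions. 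Now the degree hypothesis forces this embedding to be an isomorphism: $\deg\bigl(\mathrm{End}_D(e_1M)\bigr)+\deg\bigl(\mathrm{End}_D(e_2M)\bigr)=\deg(A)=\deg(E_1)+\deg(E_2)$, and since each $\deg(E_i)\leq\deg\bigl(\mathrm{End}_D(e_iM)\bigr)$ (as $E_i$ embeds, with $\deg E_i$ a multiple of $\deg D$ and $\dim_D e_iM$ an integer $\ge$ the corresponding one for $E_i$), equality must hold termwise. Hence $(E_i,\theta_i)\simeq(\mathrm{End}_D(e_iM),\ad_{h_i'})$, which exhibits $(A,\sigma)$ as the direct sum associated to the Morita datum $(h_1',h_2')$.

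The step I expect to be the main obstacle is verifying cleanly that a $\sigma$-invariant idempotent really does split $h$ into an \emph{orthogonal} sum and that the induced hermitian forms $h_i'$ are the right ones up to the scaling ambiguity inherent in Dejaiffe's construction — in other words, matching ``any embedding'' with ``some choice of Morita datum'' rather than a fixed one. Concretely one must check that $h(e_1m_1,e_2m_2)=h(m_1,e_1^\sigma e_2 m_2)=h(m_1,e_1e_2m_2)=0$, which uses precisely that $e_1^\sigma=e_1$ (from $\sigma$-invariance, using that the $\theta_i$ restrict correctly) and $e_1e_2=0$; this is routine but is where the involution hypotheses on the $\theta_i$ (same type, compatible with $\sigma$) are genuinely used. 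The remaining numerical comparison of degrees is then a short counting argument, and the identification with Dejaiffe's direct sum is a matter of unwinding definitions. Since this is precisely~\cite[Lemma 6.3]{QT}, I would in the final write-up simply cite that reference and include only the brief indication above.
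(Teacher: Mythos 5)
Your proposal is correct, and it matches the paper exactly: the paper offers no proof of this lemma at all, simply quoting it as~\cite[Lemma 6.3]{QT}, which is precisely what you say you would do in the final write-up. Your reconstruction of the argument (the $\sigma$-invariant orthogonal idempotents $e_i=\iota(1,0),\iota(0,1)$ splitting $h$ orthogonally, with the degree count forcing $E_i\simeq\mathrm{End}_D(e_iM)$) is the standard and correct one, so there is nothing to add.
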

Slightly extending Garibaldi's `orthogonal sum lemma'~\cite[Lemma 3.2]{Ga:01}, we get: 
\begin{prop}
\label{triple.prop}
Let $Q_1,Q_2,Q_3$ and $Q_4$ be quaternion algebras such that $Q_1\otimes Q_2$ and $Q_3\otimes Q_4$ are Brauer equivalent. 
If $(A,\sigma)$ is a direct sum of $(Q_1,\ba)\otimes(Q_2,\ba)$ and $(Q_3,\ba)\otimes(Q_4,\ba)$ 
then one of the two components of the Clifford algebra of $(A,\sigma)$ is a direct sum of 
$(Q_1,\ba)\otimes(Q_3,\ba)$ and $(Q_2,\ba)\otimes(Q_4,\ba)$, while the other is a direct sum of 
$(Q_1,\ba)\otimes(Q_4,\ba)$ and $(Q_2,\ba)\otimes(Q_3,\ba)$.
\end{prop}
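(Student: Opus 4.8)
The plan is to produce the two required direct sum decompositions of the Clifford components by exhibiting suitable embeddings and then invoking the characterization of direct sums recalled above. Since $(A,\sigma)$ has degree $8$, the components $\C_+$ and $\C_-$ of $\C(A,\sigma)$ have degree $8$, while each mixed product $(Q_i,\ba)\otimes(Q_j,\ba)$ has degree $4$; so, by that characterization, it suffices to check that, after possibly interchanging $\C_+$ and $\C_-$, there are embeddings of algebras with involution
\[
\bigl((Q_1,\ba)\otimes(Q_3,\ba)\bigr)\times\bigl((Q_2,\ba)\otimes(Q_4,\ba)\bigr)\hookrightarrow(\C_+,\sigma_+)
\]
and
\[
\bigl((Q_1,\ba)\otimes(Q_4,\ba)\bigr)\times\bigl((Q_2,\ba)\otimes(Q_3,\ba)\bigr)\hookrightarrow(\C_-,\sigma_-).
\]
Since all the Brauer classes involved are $2$-torsion, the hypothesis $Q_1\otimes Q_2\sim Q_3\otimes Q_4$ is equivalent to both $Q_1\otimes Q_3\sim Q_2\otimes Q_4$ and $Q_1\otimes Q_4\sim Q_2\otimes Q_3$, which is exactly what makes the four mixed direct sums above well defined; this condition is no extra restriction, as it already follows from the assumption that $(A,\sigma)$ is a direct sum of the given pieces.

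The first ingredient is the degree $4$ structure theorem. Setting $(E_1,\theta_1)=(Q_1,\ba)\otimes(Q_2,\ba)$ and $(E_2,\theta_2)=(Q_3,\ba)\otimes(Q_4,\ba)$, these are central simple of degree $4$ with orthogonal involution of trivial discriminant (being tensor products of two symplectic involutions), and by \cite[(15.14)]{BI} their Clifford algebras, as algebras with involution, are
\[
\C(E_1,\theta_1)=(Q_1,\ba)\times(Q_2,\ba),\qquad \C(E_2,\theta_2)=(Q_3,\ba)\times(Q_4,\ba),
\]
the two central idempotents of each being interchanged by the associated Clifford bimodule (see \cite[\S\S 8--9]{BI}).

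The heart of the argument is an orthogonal sum lemma at the level of Clifford algebras: the assumption that $(A,\sigma)$ is a direct sum of $(E_1,\theta_1)$ and $(E_2,\theta_2)$ is the analogue, for algebras with involution, of an orthogonal decomposition of a hermitian space, and $\C(A,\sigma)$ should be governed by a graded-tensor-product formula in terms of $\C(E_1,\theta_1)$, $\C(E_2,\theta_2)$ and the Clifford bimodules of $E_1$ and $E_2$, exactly as the even Clifford algebra of an orthogonal sum of two even-dimensional quadratic forms of trivial discriminant is. Carrying this out---this is the slight extension of Garibaldi's `orthogonal sum lemma' \cite[Lemma 3.2]{Ga:01} needed here---one sees that the part coming from $\C(E_1,\theta_1)\otimes\C(E_2,\theta_2)$ contributes the four mixed products $(Q_i,\ba)\otimes(Q_j,\ba)$, that the tensor product of the two Clifford bimodules interchanges the pieces $(Q_1\otimes Q_3)\leftrightarrow(Q_2\otimes Q_4)$ and $(Q_1\otimes Q_4)\leftrightarrow(Q_2\otimes Q_3)$, and hence that $\C(A,\sigma)=D_+\times D_-$, where $D_+$ contains $(Q_1,\ba)\otimes(Q_3,\ba)$ and $(Q_2,\ba)\otimes(Q_4,\ba)$ as a direct product subalgebra compatibly with $\sigma_+$, while $D_-$ contains $(Q_1,\ba)\otimes(Q_4,\ba)$ and $(Q_2,\ba)\otimes(Q_3,\ba)$ similarly. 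Identifying $\{D_+,D_-\}$ with $\{\C_+,\C_-\}$ and applying the characterization of direct sums once more then yields the statement.

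The main obstacle is this middle step. When $A$ is not split there is no full Clifford algebra, so the graded-tensor-product formula must be set up and proved at the level of the even Clifford algebra together with the Clifford bimodule, and one must keep track of the two components as well as of the transfer of the canonical involutions through the construction. Once this orthogonal sum lemma is in place, all the rest is the bookkeeping indicated above, and the hypothesis on Brauer classes is used only to guarantee that the direct sums appearing in the statement exist.
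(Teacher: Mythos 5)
Your proposal follows essentially the same route as the paper's proof: both reduce the statement to the embedding $\C(E_1,\theta_1)\otimes\C(E_2,\theta_2)\hookrightarrow(\C(A,\sigma),\underline\sigma)$ induced by the embedding $(E_1,\theta_1)\times(E_2,\theta_2)\hookrightarrow(A,\sigma)$, compute the Clifford algebras of the two summands via the degree-$4$ structure theorem, split the resulting tensor product into the four mixed quaternion products, and place two of them in each component for degree reasons before invoking the characterization of direct sums. The only notable difference is that the paper identifies which pairs land in the same component simply by comparing Brauer classes (handling separately the degenerate case where $A$ is split and all four products are isomorphic), rather than by the Clifford-bimodule bookkeeping you propose, and it asserts the key induced embedding at essentially the same level of detail as your sketch.
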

\begin{rem}
If one of the four quaternion algebras is split, as we assumed in~\cite{QT}, then all three direct sums have a hyperbolic component. 
Hence they are uniquely defined. This is not the case anymore in the more general setting considered here. 
The algebra with involution $(A,\sigma)$ does depend on the choice of an equivalence data. 
Nevertheless, once such a choice is made, its Clifford algebra is well defined. So the equivalence data defining the other two direct sums are determined by the one we have chosen. 
\end{rem}
\begin{proof} 
Denote $(E_1,\theta_1)=(Q_1,\ba)\otimes(Q_2,\ba)$ and 
$(E_2,\theta_2)=(Q_3,\ba)\otimes(Q_4,\ba)$. By~\cite[(15.12)]{BI}, their Clifford algebras with canonical involution are 
$(Q_1,\ba)\times(Q_2,\ba)$, and $(Q_3,\ba)\times(Q_4,\ba)$ respectively. 
The embedding of the direct product $(E_1,\theta_1)\times(E_2,\theta_2)$ in $(A,\sigma)$ 
induces an embedding of the tensor product of their Clifford algebras in the Clifford algebra of $(A,\sigma)$: 
\[\bigl( (Q_1,\ba)\times(Q_2,\ba)\bigr)\otimes\bigl((Q_3,\ba)\times(Q_4,\ba)\bigr)\hookrightarrow   (\C(A,\sigma),\underline\sigma).\] 
This tensor product splits as a direct product of four tensor products of quaternion algebras with canonical involution; for 
degree reasons, two of them embed in each component of $\C(A,\sigma)$. 
To identify them, it is enough to look at their Brauer classes. From the hypothesis, we have Brauer equivalences 
$Q_1\otimes Q_3\sim Q_2\otimes Q_4$ and $Q_1\otimes Q_4\sim Q_2\otimes Q_3$. 
If $Q_1\otimes Q_3$ and $Q_1\otimes Q_4$ are not Brauer equivalent, that is if $A$ is non split, this concludes the 
proof. Otherwise, all four tensor products are isomorphic, and the result is still valid. 
\end{proof}

With this in hand, we now give explicit examples of algebras with involution having $J$-invariant $(1,2,1)$, $(2,1,1)$, $(1,1,1)$ and $(1,1,0)$. 
\begin{ex}
\label{12.ex}
Let $F=K(x,y,z,t)$ be a function field in $4$ variables over a field $K$, and consider the following quaternion algebras over $F$: 
\[Q_1=(x,zt),\ Q_2=(y,zt),\ Q_3=(xy,z)\mbox{ and }Q_4= (xy,t). 
\]
We let $(A,\sigma)$ be a direct sum of $(Q_1,\ba)\otimes(Q_2,\ba)$ and $(Q_3,\ba)\otimes(Q_4,\ba)$ as in~\ref{triple.prop}, 
and denote by $(B,\tau)$, and respectively $(C,\gamma)$, the component of $\C(A,\sigma)$ Brauer equivalent to 
$Q_1\otimes Q_3\sim (x,t)\otimes (y,z)$ and $Q_1\otimes Q_4\sim (x,z)\otimes(y,t)$. 
The algebras $A$, $B$ and $C$ have index $2$, $4$ and $4$, so that $\bigl((A,\sigma),(B,\tau),(C,\gamma)\bigl)$ is a trialitarian triple ordered by indices. 
By Theorem~\ref{trial.thm}, we get $J(A,\sigma)=(1,2,j_3)$ and $J(B,\tau)=J(C,\gamma)=(2,1,j_3)$ for some $j_3$. 
Finally, assertion (i) of Corollary~\ref{values.cor} implies $j_3=1$; in other words, this triple is anisotropic. 
\end{ex}

\begin{ex}
\label{111.ex}
This example is obtained from the previous one by scalar extension. 
Consider the Albert form $\varphi=\qform{x,t,-xt,-y,-z,yz}$ associated to the biquaternion algebra $Q_1\otimes Q_3$. We let $F'$ be its function field, $F'=F(\varphi)$, and denote by $(A',\sigma')$, $(B',\tau')$ and $(C',\gamma')$ the extensions of $(A,\sigma)$, $(B,\tau)$ and $(C,\gamma)$ to $F'$. 
Since $B$ is Brauer equivalent to $Q_1\otimes Q_3$, the algebra $B'$ has index $2$. 
On the other hand, it follows from Merkurjev's index reduction formula~\cite[Thm.~3]{Mer} that the indices of $A$ and $C$ are preserved by scalar extension to $F'$, so that 
$A'$ and $C'$ have indices $2$ and $4$ respectively. 
Hence $\bigl((A',\sigma'),(B',\tau'),(C',\gamma')\bigl)$ again is a trialitarian triple ordered by indices
and Theorem~\ref{trial.thm} now gives $J(A',\sigma')=J(B',\tau')=J(C',\gamma')=(1,1,j_3)$ for some $j_3$. 
The same argument as in the proof of the first assertion of Corollary~\ref{values.cor} applies here: since $A'$ and $B'$ are non split and $C'$ has index $4$, the involutions are anisotropic 
and Theorem~\ref{trial.thm} gives $j_3=1$. 
Note that, in particular, we have $J(C',\gamma')=(1,1,1)$, even though $C'$ has index $4=2^2$.
\end{ex}

\begin{ex}
We now produce another example of an anisotropic trialitarian triple having $J$-invariant $(1,1,1)$ in which all three algebras have index $2$. 
Namely, consider the $F$-quaternion algebras 
\[Q_1=(x,y),\ Q_2=(x,z),\ Q_3=(x,t)\mbox{ and }Q_4=(x,yzt).\]
Pick an arbitrary orthogonal involution $\rho$ on $H=(x,yz)$ over $F$. 
Since $Q_1\otimes Q_2$ is isomorphic to $2$ by $2$ matrices over $H$, the tensor product of the canonical involutions 
of $Q_1$ and $Q_2$ is adjoint to a $2$-dimensional hermitian form $h_{12}$ over $(H,\rho)$. 
Similarly, $(Q_3,\ba)\otimes (Q_4,\ba)$ is isomorphic to $M_2(H)$ endowed with the adjoint involution with respect to some 
hermitian form $h_{34}$. 
Since $h_{12}$ and $h_{34}$ are both anisotropic, the hermitian form $h=h_{12}\oplus\qform{u}h_{34}$ over $H''=H\otimes F(u)$, for some indeterminate $u$, 
also is anisotropic. 
We define 
\[(A,\sigma)=(M_4(H''),\ad_h).\]
It is clear from the definition that $(A,\sigma)$ is a direct sum of $(Q_1,\ba)\otimes(Q_2,\ba)$ and $(Q_3,\ba)\otimes(Q_4,\ba)$. 
Hence,  by~\ref{triple.prop}, the two components $(B,\tau)$ and $(C,\gamma)$ of its Clifford algebra 
are Brauer equivalent to $(x,yt)$ and $(x,zt)$. This shows that all three algebras have index $2$. 
Since the involutions are anisotropic, by Theorem~\ref{trial.thm}, their $J$-invariant is $(1,1,1)$. 
\end{ex}

\begin{rem}
Note that there are many other examples, and not all of them can be described as in~\ref{triple.prop}. 
In particular, any triple which includes a division algebra cannot be obtained from this proposition. 
Consider for instance the algebra with involution $(A,\sigma)$ described in~\cite[Example 3.6]{QT02}, and let 
$(B,\tau)$ and $(C,\gamma)$ be the two components of its Clifford algebra. 
As explained there, $A$ is a division indecomposable algebra, and one component of its Clifford algebra, say $B$, has index $2$. 
Since $A$ is Brauer equivalent to $B\otimes C$, its indecomposability guarantees that $C$ is division, and we get 
$J(A,\sigma)=J(C,\gamma)=(2,1,1)$ and $J(B,\tau)=(1,2,1)$.
\end{rem}

To produce examples of algebras with involution having $J$-invariant $(1,1,0)$, we now construct examples of isotropic non split and non half-spin triples. 
As opposed to the previous examples, they can always be described using Proposition~\ref{triple.prop}. Indeed, we get the following explicit description for such triples (cf. Garibaldi's~\cite[Thm.~0.1]{Ga:iso}) 
\begin{prop}
\label{iso.prop}
If $\bigl((A,\sigma),(B,\tau),(C,\gamma)\bigl)$ is an isotropic trialitarian triple with 
$A$, $B$ and $C$ non split, then there exists division quaternion algebras $Q_1$, $Q_2$ and $Q_3$ 
such that $Q_1\otimes Q_2\otimes Q_3$ is split and the triple is described as in~\ref{triple.prop} with $Q_4=M_2(k)$. 
\end{prop}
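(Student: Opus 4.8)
The plan is to run the argument of Proposition~\ref{triple.prop} in reverse, using the isotropy hypothesis to split off a hyperbolic plane in a way compatible with the trialitarian structure. Since the triple is isotropic, all three involutions $\sigma$, $\tau$, $\gamma$ are isotropic by~\cite{Ga:trial}, and none of $A$, $B$, $C$ is split by hypothesis, so none of the involutions is hyperbolic (an isotropic orthogonal involution on a non-split degree $8$ algebra whose Clifford components are non-split cannot be hyperbolic, by~\ref{hypinv}). The first step is therefore to write $(A,\sigma)$ as a direct sum of a hyperbolic plane over the underlying division algebra and an anisotropic kernel: concretely, if $D$ is the division algebra Brauer-equivalent to $A$, then $A = \mathrm{End}_D(M)$ for some hermitian module, and isotropy lets us split $(A,\sigma) = (E_1,\theta_1) \perp (E_2,\theta_2)$ with $(E_1,\theta_1)$ a hyperbolic $2\times2$ matrix algebra over $D$, i.e. Morita-equivalent to $(D,\ba)\otimes(M_2(k),\ba)$ — equivalently, $(E_1,\theta_1)$ is itself a tensor product of two quaternion algebras with canonical involution, one of which can be taken split. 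Because $\deg(A)=8$, the complementary summand $(E_2,\theta_2)$ has degree $4$ with trivial discriminant, hence by~\cite[(15.14)]{BI} decomposes as $(Q_a,\ba)\otimes(Q_b,\ba)$ for the two components $Q_a$, $Q_b$ of its Clifford algebra.

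The second step is to identify the quaternion algebras. Since $A$ is non-split of exponent $2$, it has index $2$ or $4$; writing $D = Q_1$ if $\mathrm{ind}(A)=2$, and accounting for the index-$4$ case by an appropriate choice among $Q_a$, $Q_b$, one arranges the four quaternion algebras $Q_1,Q_2,Q_3,Q_4$ with $Q_4=M_2(k)$ split and $Q_1\otimes Q_2\sim Q_3\otimes Q_4 \sim Q_3$ (this is the Brauer-class bookkeeping forced by $[A]=[Q_1]+[Q_2]$ and by the hyperbolic summand contributing a split component). The precise matching of which $Q_i$ is the division algebra $D$ is dictated by $\mathrm{ind}(A)$: if $A$ is division of index $4$ this is impossible here since one component of $\C(A,\sigma)$ would then have to be division and the triple is symmetric, so in fact $\mathrm{ind}(A)=2$ and one reads off $Q_1$, $Q_2$, $Q_3$ from the Clifford algebra components. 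One then checks $Q_1\otimes Q_2\otimes Q_3$ is split: indeed $[Q_1]+[Q_2]+[Q_3] = [A] + [Q_3]$, and by the fundamental relations~\ref{FR} one of $[B]$, $[C]$ equals $[A]+[Q_3]$-type combinations, which the isotropy of the other involutions forces to vanish.

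The third and final step is to invoke Proposition~\ref{triple.prop}: having exhibited $(A,\sigma)$ as a direct sum of $(Q_1,\ba)\otimes(Q_2,\ba)$ and $(Q_3,\ba)\otimes(Q_4,\ba)$ with $Q_4$ split, that proposition computes the two Clifford components as direct sums of $(Q_1,\ba)\otimes(Q_3,\ba)$ with $(Q_2,\ba)\otimes(Q_4,\ba)$ and of $(Q_1,\ba)\otimes(Q_4,\ba)$ with $(Q_2,\ba)\otimes(Q_3,\ba)$; since $Q_4$ is split these simplify, and comparing Brauer classes with $B$ and $C$ identifies them up to the choice of equivalence data. I expect the main obstacle to be the bookkeeping in the second step: ensuring that the Brauer-class relations $[Q_1]+[Q_2] = [Q_3]$ and $[Q_1][Q_2][Q_3]$ split are genuinely equivalent to "$A$, $B$, $C$ all non-split and $\gamma$ non-hyperbolic," and handling the index-$4$ subtleties — one must rule out that $A$ is division, which follows because a division degree-$8$ algebra with isotropic orthogonal involution whose Clifford components are non-split does not exist (the isotropic kernel would be a proper non-zero hermitian form of the wrong degree over a quaternion division algebra). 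Everything else is a matter of assembling Dejaiffe's direct-sum formalism and the structure theorems of~\cite[\S15]{BI}.
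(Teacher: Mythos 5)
Your overall strategy is the paper's: use non-hyperbolicity of $\sigma$ to force $\ind(A)=2$, Witt-decompose the adjoint hermitian form into a rank-two hyperbolic part and a rank-two anisotropic part, identify the hyperbolic summand with $(M_2(k),\ba)\otimes(Q_1,\ba)$ and the anisotropic degree-$4$ summand with $(Q_2,\ba)\otimes(Q_3,\ba)$ via~\cite[(15.14)]{BI}, and feed this into Proposition~\ref{triple.prop}. However, two of your justifications do not hold up as written, and both occur at the points you yourself flagged as delicate.

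First, the exclusion of $\ind(A)=4$. Your stated reason (``one component of $\C(A,\sigma)$ would then have to be division and the triple is symmetric'') is not an argument: nothing about index $4$ forces a Clifford component to be division, and the parenthetical you give later only rules out $A$ division of index $8$, which is the easy case. The correct point is that if $\ind(A)=4$ then $\sigma$ is adjoint to a rank-two hermitian form over a degree-$4$ division algebra; an isotropic rank-two hermitian form is hyperbolic, and by~\ref{hypinv} a hyperbolic orthogonal involution on a degree-$8$ algebra has a split Clifford component, contradicting the hypothesis that $B$ and $C$ are non-split. This is the same mechanism you already invoked to rule out hyperbolicity, just applied once more; without it your ``first step'' (which presupposes that the underlying division algebra is quaternion, so that the complementary summand has degree $4$) is not available.

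Second, the splitness of $Q_1\otimes Q_2\otimes Q_3$. Your argument (``the isotropy of the other involutions forces \ldots to vanish'') is circular and does not compute anything. Nothing needs to be forced: both summands of the direct sum are Brauer-equivalent to $A$. The hyperbolic summand is $(M_2(k),\ba)\otimes(Q_1,\ba)$ with $[Q_1]=[A]$, and the anisotropic summand is $M_2(Q_1)$ with an involution whose Clifford components $Q_2,Q_3$ satisfy $[Q_2]+[Q_3]=[Q_1]$ by~\cite[(15.12)]{BI}; hence $[Q_1]+[Q_2]+[Q_3]=0$ automatically. Finally, the statement asks for $Q_1,Q_2,Q_3$ \emph{division}, which you never verify: $Q_1$ is division because $\ind(A)=2$, and $Q_2,Q_3$ are division because otherwise the anisotropic summand would be hyperbolic by~\cite[(15.14)]{BI} (equivalently, $B$ or $C$ would split), contrary to hypothesis.
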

\begin{proof}
Since $B$ and $C$ are non split, the involution $\sigma$ is not hyperbolic by~\ref{hypinv}. 
Hence $A$ has index $2$, $A=M_4(Q_1)$ for some quaternion algebra $Q_1$ over $k$. 
Fix an orthogonal involution $\rho_1$ on $Q_1$; the involution $\sigma$ is adjoint to a hermitian form 
$h=h_0\oplus h_1$ over $(Q_1,\rho_1)$, with $h_0$ hyperbolic, $h_1$ anisotropic and both of dimension $2$ and trivial discriminant. 
Therefore, $(A,\sigma)$ is a direct sum of $(M_2(Q_1),\ad_{h_0})$ and $(M_2(Q_1),\ad_{h_1})$. 
Since the first summand is hyperbolic, it is isomorphic to 
$(M_2(k),\ba)\otimes(Q_1,\ba)$. The second is $(Q_2,\ba)\otimes(Q_3,\ba)$, where $Q_2$ and $Q_3$ are the two components of the Clifford algebra $\ad_{h_1}$, and 
this concludes the proof.  
\end{proof}

We refer the reader to~\cite[\S 6]{QT} for a more precise description of those triples. They are the only ones for which the $J$-invariant is 
$(1,1,0)$. 

\section{Generic properties}
\label{generic.section}

In the present section we investigate the relationship between the values of the $J$-invariant
of an algebra with involution $(A,\sigma)$ and the $J$-invariant of the respective adjoint quadratic form $\varphi_\sigma$
over the function field $F_A$ of the Severi-Brauer variety of $A$, which is a generic splitting field of $A$. 

\begin{dfn}
We say $(A,\sigma)$ is generically Pfister if $\varphi_\sigma$ is a Pfister form.
Observe that in this case $\deg A$ is always a power of $2$ and the
$J$-invariant over $F_A$ has the form:
$$
J\bigl((A,\sigma)_{F_A}\bigr)=(0,\ldots,0,*)
$$
(all zeros except possibly the last entry which is $0$ or $1$).

We say $(A,\sigma)$ is in $I^s$, $s> 2$,
if $\varphi_\sigma$ belongs to the $s$-th power $I^s(F_A)$ of the fundamental ideal $I(F_A)\subset W(F_A)$ of the Witt ring of $F_A$. 
\end{dfn}

\begin{thm}\label{Meta-conj}
Let $(A,\sigma)$ be an algebra of degree $2n$ with orthogonal involution with trivial discriminant.
\begin{itemize}
\item[(a)]
If $(A,\sigma)$ is in $I^s$, $s>2$, then 
$$J(A,\sigma)=(j_1,\underbrace{0,\ldots,0,}_{2^{s-2}-1\text{ times}}*,\ldots,*)$$ 
\item[(b)]
In particular, if $(A,\sigma)$ is generically Pfister, then $J(A,\sigma)=(*,0,\ldots,0,*)$.  
\end{itemize}
\end{thm}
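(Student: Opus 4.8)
The plan is to pass to the generic splitting field $F_A$ of $A$, invoke the known quadratic-form version of the statement there, and descend --- separately for the degree-one parameters (via the Tits algebras) and for the higher ones. Over $F_A$ the algebra $A$ splits, so $(A,\sigma)_{F_A}$ is adjoint to $\varphi_\sigma\in I^s(F_A)$, and by Proposition~\ref{split.prop} the non-trivial parameters of $J\bigl((A,\sigma)_{F_A}\bigr)$ are exactly those of $J(\varphi_\sigma)$. The quadratic-form input I would use is that a form in $I^s$ has $J$-invariant of the shape $(j_1,\underbrace{0,\dots,0}_{2^{s-2}-1},\ast,\dots,\ast)$: this records the vanishing of the first $\approx 2^{s-2}$ parameters of the $J$-invariant of a form in $I^s$ and follows from Vishik's description of the motivic decomposition (equivalently of the restrictions on the splitting pattern) of such forms through the dictionary of~\cite[4.8]{PSZ} and~\cite[\S88]{EKM}.

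The first genuine step is to translate the hypothesis into a property of $A$ itself. Since $s>2$ we have $\varphi_\sigma\in I^3(F_A)$, so its Clifford invariant vanishes, i.e.\ both components $\C_\pm$ of the Clifford algebra of $(A,\sigma)$ become split over $F_A$. By Amitsur's theorem $\ker\bigl(\br(F)\to\br(F_A)\bigr)$ is generated by $[A]$, so $[\C_+],[\C_-]\in\{0,[A]\}$; feeding this into the fundamental relations of Example~\ref{FR} and using that $[A]$ has order dividing $2$, one gets that either $A$ is split (automatic when $\deg A\equiv 2\bmod4$) or $\deg A\equiv0\bmod4$ and one component, say $\C_-$, is split over $F$. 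In the first case $F_A/F$ is purely transcendental, so $\varphi_\sigma$ is already defined over $F$, lies in $I^s(F)$, and is the form adjoint to $\sigma$; the theorem then reduces to its quadratic-form version. This disposes of the whole case $\deg A\equiv2\bmod4$, and of $\deg A\equiv0\bmod4$ with $A$ split.

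There remains the half-spin case: $\deg A\equiv0\bmod4$, $A$ non-split. Here $(A,\sigma)$ is half-spin (Proposition~\ref{hs.prop}) and $j_2=0$ follows at once from Corollary~\ref{even.cor}(2), since $\min\{\ii_+,\ii_-\}=0$; this already finishes part~(a) for $s=3$. For $s\ge4$ one must still show $j_3,\dots,j_{2^{s-2}}$ vanish. By Proposition~\ref{hs.prop} these sit inside $J_2\bigl(\Spin^+(A,\sigma)\bigr)$, a group whose Tits algebras are all Brauer-equivalent to $A$ and which over $F_A$ becomes $\Spin^+(\varphi_\sigma)$; since isogenous groups share their non-trivial $J$-parameters and $\varphi_\sigma\in I^s$, these entries vanish over $F_A$. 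The crux of the proof is to descend this vanishing to $F$: the $J$-invariant can only drop under base change (\ref{extension}), so vanishing over $F_A$ is formally weaker than what we want, and an extra input is required. I would try to provide it by a structural reduction --- if $(A,\sigma)\in I^s$ is isotropic over $F$, it is Witt-equivalent over $F$ to a lower-degree algebra with involution still in $I^s$, so by Proposition~\ref{wittai.prop} and induction on $\deg A$ one is reduced to small degrees, where the classifications of Sections~\ref{applquad}--\ref{tricase} (culminating in Theorem~\ref{trial.thm}) apply; the anisotropic case would be treated using that $\C_-$ is split over $F$, which supplies additional rational cycles on $\XX$ coming from the corresponding quadratic form --- or, failing that, by a descent lemma for rational cycles along the Severi--Brauer function field $F_A/F$.

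Finally, part~(b) follows from~(a). Generically Pfister means $\varphi_\sigma$ is an $m$-fold Pfister form with $2^m=\deg A$, hence $\varphi_\sigma\in I^m$; applying~(a) with $s=m$ and reading off Kac's list of generator degrees for $\PGO_{2n}$, $n=2^{m-1}$ (or the table~\cite[4.13]{PSZ}), that $2^{m-2}-1$ is exactly the number of ``middle'' positions of the $J$-invariant, we obtain $J(A,\sigma)=(\ast,0,\dots,0,\ast)$, the last entry being $0$ or $1$ according as a Pfister form is hyperbolic or anisotropic.
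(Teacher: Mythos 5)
Your reduction of the degree-one parameters is fine (the Amitsur/fundamental-relations analysis correctly shows that either $A$ splits or $(A,\sigma)$ is half-spin, whence $j_2=0$ by Corollary~\ref{even.cor}), but the proof has a genuine gap exactly where you flag it: the vanishing of $j_3,\dots,j_{2^{s-2}}$ over $F$ when $s\geq 4$. Knowing these entries vanish over $F_A$ is useless on its own, since by~\ref{extension} the $J$-invariant only \emph{decreases} under scalar extension --- descent from $F_A$ is the wrong direction, and no general ``descent lemma for rational cycles along $F_A/F$'' can exist (already $j_1$ drops from nonzero to zero along $F_A/F$ whenever $A$ is nonsplit). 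Your two fallback suggestions are not arguments: the Witt-equivalence induction only applies when $(A,\sigma)$ is isotropic over $F$ and in any case Proposition~\ref{wittai.prop} controls only the \emph{nontrivial} entries, not their positions after re-indexing in lower degree; and ``additional rational cycles coming from the split component'' is precisely the content that would need to be proved. None of the paper's main results (Corollaries~\ref{mainineq}, \ref{mainineq2}, Theorem~\ref{corj0}) reach the parameters attached to generators of degree $>1$, so the Tits-algebra machinery cannot close this gap.

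The paper's actual argument avoids $F_A$ altogether and is much shorter. One takes the projective homogeneous variety $X=\D_n/P_i$ of parabolic subgroups of the odd type $i=2[\tfrac{n+1}{2}]-2^{s-1}+1$. Since $i$ is odd, $A$ splits over $F(X)$, so $\varphi_\sigma$ is defined there and lies in $I^s\bigl(F(X)\bigr)$; moreover its Witt index over $F(X)$ is at least $i$, so its anisotropic part has dimension $<2^s$ and is hyperbolic by Arason--Pfister. Hence $X$ is \emph{generically split}, and the structure theorem for generically split projective homogeneous varieties \cite[Thm.~2.3]{PS09b} immediately forces the prescribed block of zeros in $J(A,\sigma)$; part (b) is the case $2^s=2n$. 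If you want to salvage your approach, you would need a substitute for this last ingredient --- some statement converting hyperbolicity (not mere information about $J$ over an extension) into vanishing of specific higher $J$-parameters over $F$ --- which is exactly what the generically-split machinery provides.
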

\begin{proof}
(a) Let $X=\D_n/P_i$ be the variety of maximal parabolic subgroups of type
$i:=2\cdot[\frac{n+1}{2}]-2^{s-1}+1$ (For parabolic subgroups we use notation from \cite[2.1]{PS09}). Since $i$ is odd, $A_{F(X)}$ splits,
and therefore the quadratic form $\varphi_\sigma$ is defined over $F(X)$.
By assumption $\varphi_\sigma\in I^s\bigl(F(X)\bigr)$. The Witt index of $\varphi_\sigma$ is at least $i$.
Therefore the anisotropic part of $\varphi_\sigma$ has dimension at most $2(n-i)<2^s$.
Thus, by the Arason-Pfister theorem $\varphi_\sigma$ is hyperbolic. In particular,
the variety $X$ is generically split. Therefore by \cite[Theorem~2.3]{PS09b} we obtain the desired expression for
the $J$-invariant.

(b) Finally, if $(A,\sigma)$ is generically Pfister, then $\varphi_\sigma\in I^s\bigl(F(X)\bigr)$,
where $2^s=2n$ and (b) follows from (a).
\end{proof}

\begin{rem}
Let $(j_2,\ldots,j_r)$ be the $J$-invariant of $\varphi_\sigma$ over $F_A$, $r=[\frac{n+2}{2}]$.
In view of the theorem one may conjecture
that the $J$-invariant of $(A,\sigma)$ is obtained from $J(\varphi_\sigma)$ just
by adding an arbitrary left term, i.e.
$$
J(A,\sigma)=(*,j_2,\ldots,j_r).
$$
For example, 
if $\varphi_\sigma$ is excellent, then the $J$-invariant should be equal to
$$J(A,\sigma)=(*,0,\ldots,0,*,0,\ldots,0),$$ 
where
the second $*$ has degree $2^s-1$ for some $s$ and equals either $0$ or $1$.

By the results of~\S\ref{tricase}, observe that this holds for algebras of degree $8$.
\end{rem}

%%%%%%%%%%%%%%%%%%%%%%%%%%%%%%%%%%%%%%%%
%%%%%%%%%%%%%%%%%%%%%%%%%%%%%%%%%%%%%%%
%%%%%%%%%%%%%%%%%%%%%%%%%%%%%%%%%%%%%%%

\section{Appendix}
\label{appendix}
The following table provides the values of the parameters 
of the $J$-invariant for all orthogonal groups (here $p=2$).

\medbreak

\medbreak

\noindent
\begin{tabular}{|l|l|l|l|l|}
\hline
$G_0$  & $r$ & $d_i$ & $k_i$ & restrictions on $j_i$\\\hline

$\SO_n$  & $[\frac{n+1}{4}]$ & $2i-1$ & 
$[\log_2\frac{n-1}{d_i}]$ & 
if $d_i+l=2^sd_m$\\
&  & & & and $2\nmid \binom{d_i}{l}$, then $j_m\le j_i+s$ \\
\hline
$\Spin^{\pm}_{2n},\,2\mid n$&$\frac n2$&$1,\,i=1$&$2^{k_1}\parallel n$&the same restrictions\\
&& $2i-1,\,i\ge 2$ &$[\log_2\frac{2n-1}{d_i}]$ &\\
\hline
$\Spin_n$&$[\frac{n-3}{4}]$&$2i+1$&$[\log_2\frac{n-1}{d_i}]$& the same restrictions\\
&&&&\\
\hline
$\PGO_{2n}$  & $[\frac{n+2}{2}]$ & $1,\,i=1$ & $2^{k_1}\parallel n$ & the same restrictions\\
&& $2i-3,\,i\ge 2$ & $[\log_2\frac{2n-1}{d_i}]$& assuming $i,m\ge 2$\\
\hline
\end{tabular}

\medbreak

\medbreak

Note that this table coincides with \cite[Table~4.13]{PSZ} except
of the last column which in our case contains more restrictive conditions.
For $s=0$ and $1$ the restrictions in the last column are equivalent to those
in \cite[Table~4.13]{PSZ}.

The conditions of the last column are simply translation of \cite[Prop.~5.12]{Vi05}
from the language of Vishik's $J$-invariant to ours.

All values of the $J$-invariant which satisfy the restrictions given
in the table are called admissible.

\medbreak

\noindent{{\textbf{Acknowledgments.}  
The authors are sincerely grateful to Victor Petrov for numerous discussions and,
in particular, for providing the table in the appendix.

The authors gratefully acknowledge the support of
the SFB/Transregio~45 Bonn-Essen-Mainz.
The second author was supported by the MPIM Bonn.
The last author was supported by NSERC Discovery 385795-2010 and 
Accelerator Supplement 396100-2010 grants. He also thanks Universit\'e Paris 13 
for its hospitality in June 2010.}}

\bibliographystyle{plain}

\end{document}